\definecolor{lightred}{HTML}{ff4d4d}
\definecolor{lightblue}{HTML}{1F88CD}
\definecolor{lightgrey}{HTML}{727272}
\definecolor{lightblue2}{HTML}{009EC1}
\definecolor{mypink}{HTML}{FD00B0}
\tikzset{
>=stealth',
  punktchain/.style={
    rectangle,
    rounded corners,
    % fill=black!10,
    draw=black, thick,
    minimum height=3em,
    text centered,
    on chain},
  line/.style={draw, thick, <-},
  element/.style={
    tape,
    top color=white,
    bottom color=blue!50!black!60!,
    minimum width=8em,
    draw=blue!40!black!90, very thick,
    text width=10em,
    minimum height=3.5em,
    text centered,
    on chain},
  every join/.style={->, thick,shorten >=1pt},
  decoration={brace},
  tuborg/.style={decorate},
  tubnode/.style={midway, right=2pt},
}
\newtheorem*{rep@theorem}{\rep@title}
\newcommand{\newreptheorem}[2]{%
\newenvironment{rep#1}[1]{%
 \def\rep@title{#2 \ref{##1}}%
 \begin{rep@theorem}}%
 {\end{rep@theorem}}}
\newtheorem{theorem}{Theorem}[section]
\newtheorem{proposition}[theorem]{Proposition}
\newtheorem{lemma}[theorem]{Lemma}
\newtheorem{corollary}[theorem]{Corollary}
\newtheorem{conjecture}[theorem]{Conjecture}
\newtheorem{thm-int}{Theorem}
\theoremstyle{definition}
\newtheorem{Def-s}[theorem]{Definition}
\newtheorem{definition}[theorem]{Definition}
\newtheorem{remark}[theorem]{Remark}
\newcommand{\ignore}[1]{}
\newcommand{\ra}{\rightarrow}
\newcommand{\xra}{\xrightarrow}
\newcommand{\sst}{\subset}
\newcommand{\D}{\mathrm{D}}
\newcommand{\ZZ}{\mathbb{Z}}
\newcommand{\QQ}{\mathbb{Q}}
\newcommand{\CC}{\mathbb{C}}
\newcommand{\PP}{\mathbb{P}}
\newcommand{\DT}{\mathsf{DT}}
\newcommand{\GW}{\mathsf{GW}}
\newcommand{\PT}{\mathsf{PT}}
\newcommand{\Hilb}{\mathrm{Hilb}}
\newcommand{\FHilb}{\mathrm{FHilb}}
\newcommand{\ch}{\mathrm{ch}}
\newcommand{\KK}{\mathrm{K}}
\newcommand{\pr}{\mathrm{pr}}
\renewcommand{\Re}{\operatorname{Re}}
\renewcommand{\Im}{\operatorname{Im}}
\DeclareMathOperator{\rk}{rk}
\DeclareMathOperator{\cok}{cok}
\DeclareMathOperator{\Coh}{\mathrm{Coh}}
\DeclareMathOperator{\sF}{\mathsf{F}}
\DeclareMathOperator{\Hom}{Hom}
\DeclareMathOperator{\Pic}{Pic}
\newcommand{\cC}{\mathcal{C}}
\newcommand{\cA}{\mathcal{A}}
\newcommand{\cH}{\mathcal{H}}
\newcommand{\cM}{\mathcal{M}}
\DeclareMathOperator{\oh}{\mathcal{O}}
\begin{document}

\title[Castelnuovo bound and GW-invariants of quintic 3-folds]{Castelnuovo bound and higher genus Gromov--Witten invariants of quintic 3-folds}
\subjclass[2020]{14N35 (Primary); 14J33, 14H50, 14F08 (Secondary)}
\keywords{Castelnuovo bound, Gromov--Witten invariants, Gopakumar--Vafa invariants, Mirror symmetry, BCOV theory.}

\author{Zhiyu Liu}

\address{Institute for Advanced Study in Mathematics, Zhejiang University, Hangzhou, Zhejiang Province 310030, P. R. China}
\address{College of Mathematics, Sichuan University, Chengdu, Sichuan Province 610064, P. R. China}
\email{zhiyuliu@stu.scu.edu.cn, jasonlzy0617@gmail.com}
\urladdr{sites.google.com/view/zhiyuliu}

\author{Yongbin Ruan}

\address{Institute for Advanced Study in Mathematics, Zhejiang University, Hangzhou, Zhejiang Province 310030, P. R. China}
\email{ruanyb@zju.edu.cn}

%\urladdr{}

\begin{abstract}

We prove a conjectural vanishing result for Gopakumar--Vafa invariants of quintic 3-folds, referred to as Castelnuovo bound in the literature. Furthermore, we calculate Gopakumar--Vafa invariants at Castelnuovo bound $g=\frac{d^2+5d+10}{10}$. As physicists showed, these two properties allow us to compute all Gromov--Witten invariants of quintic 3-folds up to genus $53$ \cite{HKQ09,PhysRevD.79.066001}, provided that the conifold gap condition holds. We also give a bound for the genus of any one-dimensional closed subscheme in a smooth hypersurface of degree $\leq 5$, which may be of independent interest.

\end{abstract}

\vspace{-1em}
\maketitle

\setcounter{tocdepth}{1}
\tableofcontents

\section{Introduction}

\subsection{Background}

One of the most difficult problems in geometry and physics is to compute higher genus Gromov--Witten (GW) invariants of compact Calabi--Yau 3-folds (symbolized by the famous quintic 3-fold which we will focus on in this article). In the 90s, the effort to compute genus zero invariants of Calabi--Yau 3-folds led to the birth of mirror symmetry as a mathematical subject! Afterward, it was quickly realized that the higher genus computation is much more difficult.
It took another ten years for Zinger to compute
genus one GW-invariants \cite{Zi08} of quintic 3-folds and yet 
another ten years for the second author and his collaborators to compute genus two case \cite{GJR17P}. More than ten years ago, physicists Klemm and his group shocked the community by announcing a physical derivation of GW-invariants of quintic 3-folds up to genus 53 \cite{HKQ09,PhysRevD.79.066001}. This was the time when mathematicians could only do the calculation for genus 1. The record set by physicists seemed to be unreachable by a mathematician in one's lifetime.

Klemm group's physical derivation used five mathematical conjectures: Four BCOV axioms (finite generation, holomorphic anomaly equation, orbifold regularity and conifold gap condition) and Castelnuovo bound. The BCOV axioms originated from B-model and can be viewed as a set of conjectural axioms to describe the structure of the higher genus GW generating series. To simplify the notation, let us restrict ourselves to a quintic 3-fold $X$. Let $N_{g,d}$ be the genus $g$ degree $d$ GW-invariant of $X$.  We define the genus $g$ GW generating series 

$$F_g(t):=\sum_{d\geq 0} N_{g, d}  t^{d}.$$

\begin{enumerate}[(1)]

\item The finite generation axiom asserts that

$$F_g=Y^{-(g-1)}P_g (\chi_1,\chi_2,\chi_3,\chi_4, Y)$$
for a polynomial $P_g$ where $\chi_1,\chi_2,\chi_3,\chi_4$ and $Y$ are explicit generating series constructed out of genus zero data;

\item Holomorphic anomaly equation asserts that $\frac{\partial P_g}{\partial \chi_i}$ for $1\leq i\leq 4$ can be explicitly expressed in terms of lower genus data. Therefore, the monomials of $P_g$ containing $\chi_i$ are determined by lower genus data. Moreover, 
$$f_g:=P_g(0,0,0,0,Y)=\sum_{i=0}^{3g-3} a_{i,g}Y^i$$
is a polynomial in $Y$ of degree $3g-3$;

\item The orbifold regularity asserts that 
$$a_{i,g}=0$$
for $i\leq \lceil\frac{3g-3}{5}\rceil$;

\item The conifold gap condition determines $a_{i,g}$ for $i\geq g$ recursively from lower genus data. 
\end{enumerate}

After applying all BCOV axioms, we only need to determine $\lfloor\frac{2g-2}{5}\rfloor$ many initial conditions (i.e.~coefficients $\{a_{i,g}\}_{i=\lceil\frac{3g-3}{5}\rceil+1}^{g-1}$) for $Y$ to derive $F_g(t)$. Then the \emph{Castelnuovo bound}, roughly speaking, fixes a large number of remaining initial conditions, which allows us to compute $F_g$ for $g\leq 53$. It first appeared in the literature in physics (e.g.~\cite{KKV,HKQ09} and \cite[(4.1)]{PhysRevD.79.066001}), which predicts the vanishing of Gopakumar--Vafa invariants for a given degree at sufficiently high genus.

During the last few years, dramatic progress has been achieved to prove these conjectures. For the quintic 3-fold, based on theories developed in \cite{Chen_2021,Chang_2021,CJR22,CJRS18}, the finite generation and holomorphic anomaly equation have been proved independently by Guo--Janda--Ruan \cite{GJR18} and Chang--Guo--Li \cite{CGL18,CGLFeynman}. The orbifold regularity was proved by Guo--Janda--Ruan \cite{GJR18}. The conifold gap condition was proved by Guo--Janda--Ruan for $g\leq 5$ \cite{GJR18}. As a consequence, they computed GW-invariants up to $g=5$. The main content of this article is to prove the last conjecture, i.e., Castelnuovo bound.

\subsection{The main theorems}

To state our results, we need to introduce Gopakumar--Vafa (GV) invariants. In physics, these invariants count the number of BPS states (c.f.~\cite{GV1,GV2}). To define Gopakumar--Vafa invariants, let

$$\GW(\lambda, t)=\sum_{d>0}\sum_{g\geq 0}N_{g,d}\lambda^{2g-2}t^d.$$

Then, we expand it in terms of the Fourier series

\begin{equation} \label{GW=GV}
    \GW(\lambda, t)=\sum_{g\geq 0} \sum_{d>0} \sum_{r\geq 1} \frac{n^d_g}{r}\cdot \big(2\sin(\frac{r\lambda}{2})\big)^{2g-2}\cdot t^{rd}.
\end{equation}

We define \emph{Gopakumar--Vafa (GV) invariants} of $X$ to be the rational numbers $n_g^{d}$. The GW and GV-invariants are uniquely determined by each other.
From the above definition, it is not clear that $n_{g}^d$ are integers. The integrality of $n_g^d$ was proved by Ionel--Parker \cite{IP13} via the symplectic geometric definition of $n_g^d$.

Now, we can state our main results.

\begin{theorem} \label{main_thm_1.1}
Let $X$ be a quintic 3-fold. Then we have
\[n^d_g=0\]
for any
\[g>\frac{d^2+5d+10}{10}.\]
\end{theorem}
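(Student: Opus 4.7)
My plan is to reduce the vanishing of Gopakumar--Vafa invariants to a sharp Castelnuovo-type bound on the arithmetic genus of 1-dimensional subschemes of the quintic, and then to establish that bound by a hyperplane-section Hilbert function analysis. The reduction step uses the Gromov--Witten/Pandharipande--Thomas correspondence for projective Calabi--Yau 3-folds together with the BPS product formula for the stable pair partition function. A stable pair $(\mathcal{F}, s)$ of class $dH$ with $\chi(\mathcal{F}) = n$ has Cohen--Macaulay scheme-theoretic support $C \subset X$ of degree $d$ and fits into an exact sequence $0 \to \mathcal{O}_C \to \mathcal{F} \to Q \to 0$ with $Q$ zero-dimensional; hence $n = \chi(\mathcal{F}) \geq \chi(\mathcal{O}_C) = 1 - p_a(C)$, so the PT moduli space $P_n(X, dH)$ is empty whenever $n < 1 - g_{\max}(X, d)$, where $g_{\max}(X, d)$ denotes the supremum of arithmetic genera of 1-dimensional closed subschemes $C \subset X$ of degree $d$. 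The BPS expansion of the PT partition function then forces $n^d_g = 0$ for all $g > g_{\max}(X, d)$, so Theorem \ref{main_thm_1.1} reduces to the geometric statement
\[
p_a(C) \leq \frac{d^2 + 5d + 10}{10} \qquad \text{for every 1-dim closed } C \subset X \text{ of degree } d.
\]

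I would first reduce this bound to the Cohen--Macaulay case: for an arbitrary 1-dim $C$, the maximal CM subscheme $C^{\mathrm{CM}}$ fits into a surjection $\mathcal{O}_C \twoheadrightarrow \mathcal{O}_{C^{\mathrm{CM}}}$ with $0$-dimensional kernel, giving $p_a(C) \leq p_a(C^{\mathrm{CM}})$. For pure $C$, I pick a generic hyperplane $H \subset \mathbb{P}^4$ so that $S := X \cap H$ is a smooth quintic surface in $H \cong \mathbb{P}^3$ and $\Gamma := C \cap H$ is a length-$d$ zero-dimensional subscheme contained in $S$. The restriction sequence $0 \to \mathcal{O}_C(-1) \to \mathcal{O}_C \to \mathcal{O}_\Gamma \to 0$ and a standard Castelnuovo-type Hilbert-function comparison bound $p_a(C)$ above by a sum of deviations $d - h_\Gamma(n)$, where $h_\Gamma(n) = \dim(R/I_\Gamma)_n$ is the graded Hilbert function of $\Gamma$ in $H$. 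The task is then to establish a sharp lower bound on $h_\Gamma(n)$ using that $\Gamma$ lies on the quintic surface $S$: the uniform position principle combined with genericity of $H$ should yield an estimate of the form $h_\Gamma(n) \geq \min(d, 5n + \text{lower-order terms})$, whose leading term $5n$ reflects $\deg S = 5$. Summing over $n \lesssim d/5$ produces a quadratic bound with leading term $d^2/10$, and careful bookkeeping of lower-order terms recovers the sharp constants. The bound is saturated by the complete intersection curves $C = X \cap H \cap H_m$ of degree $d = 5m$ and arithmetic genus $\frac{5m(m+1)}{2}+1$, as computed by adjunction on $S$.

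The main obstacle is establishing the sharp Hilbert-function bound for 0-dimensional subschemes of a quintic surface. Classical Castelnuovo--Harris theory for curves in $\mathbb{P}^4$ alone gives only a weaker estimate with leading coefficient $1/6$, since it ignores the hypersurface constraint. Extracting the sharp $1/10$ requires genuinely using that $\Gamma$ lies on a particular degree-$5$ surface, not merely in $\mathbb{P}^4$. A natural strategy is to dichotomize on whether $\Gamma$ is contained in surfaces of degree strictly less than $5$: in the affirmative case, $C$ itself splits off components on a proper low-degree hypersurface and one recurses on irreducible components and lower-degree complete-intersection strata; in the generic case, $S$ is the minimal-degree surface through $\Gamma$ and a Gruson--Lazarsfeld--Peskine-type lifting theorem transfers the Hilbert-function bound from $\Gamma$ back to $C$. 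Coordinating this dichotomy with induction on the degree---and carefully handling reducible $C$, singular or reducible hyperplane sections, and matching the extremal complete intersections---is the principal technical challenge.
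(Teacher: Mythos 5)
Your reduction from the vanishing of $n^d_g$ to the genus bound $p_a(C)\leq \frac{d^2+5d+10}{10}$ for all one-dimensional closed subschemes is essentially the paper's route (Corollary \ref{PT_zero}, Proposition \ref{logPT_zero}, Proposition \ref{BPS_zero}), but you state the last step too quickly: emptiness of $P_n(X,dH)$ for $n<1-B(d)$ does \emph{not} by itself force $n^d_g=0$ for $g>B(d)$, because the connected series $\log \PT$ mixes products of PT invariants of lower degrees and multiple-cover contributions $n_g^{d/r}$. One needs the superadditivity $B(\sum_i d_i)-1\geq\sum_i\big(B(d_i)-1\big)$ and the multiple-cover inequality $\frac{B(d)-1}{r}+1\geq B(\frac{d}{r})$, together with an induction on degree. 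These do hold for $B(d)=\frac{d^2+5d+10}{10}$, so this part of your argument is repairable with extra bookkeeping.

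The genuine gap is in the genus bound itself. The bound must hold for \emph{every} one-dimensional closed subscheme of $X$, in particular for reducible and non-reduced Cohen--Macaulay curves, since exactly these occur as supports of stable pairs. Your strategy --- generic hyperplane section $\Gamma=C\cap H$ of length $d$ on the quintic surface $S$, followed by a sharp lower bound $h_\Gamma(n)\gtrsim\min(d,5n)$ on its Hilbert function --- rests on the uniform position principle and a Gruson--Lazarsfeld--Peskine-type lifting theorem, and both fail outside the integral (or at least reduced) case: for a multiple structure on a line, $\Gamma$ is a fat-point scheme to which no uniform-position statement applies, and the required Hilbert-function estimate is precisely the unproved hard core, not a routine consequence of $\deg S=5$. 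You flag this as the ``main obstacle,'' but the key lemma is never supplied, and even for integral curves the sharp leading coefficient $\frac{1}{10}$ for curves on a quintic 3-fold was not accessible by these classical methods. The paper avoids the issue entirely by analyzing the ideal sheaf $I_C$ in tilt-stability: either $I_C$ is semistable in a suitable region and the Bayer--Macr\'i--Toda inequality (Theorem \ref{SBG}, known for quintics in the needed range) gives the bound directly, or a wall forces $C$ into a hyperplane section (handled by applying the inequality to the rank-zero sheaf $I_{C/D}$) or yields a decomposition $C=C_1\cup C_2$ with controlled intersection length, after which one inducts on degree. That mechanism is insensitive to reducedness and irreducibility, which is exactly the generality your approach cannot reach as written.
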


It is easy to check that the line $d=\frac{2g-2}{5}$ and the conic $g=\frac{d^2+5d+10}{10}$ intersect at $(g=1,d=0)$ and $(g=51,d=20)$. Moreover, $g=\frac{d^2+5d+10}{10}$ bounds $d=\lfloor\frac{2g-2}{5}\rfloor$ for $g\leq 53$. Hence,

\begin{corollary} \label{cast_bound}

Suppose that $g\leq 53$ and $g\neq 51$. Then we have
$$n^d_g=0$$
for $d\leq \lfloor\frac{2g-2}{5}\rfloor$. 
Furthermore, we have
$$n^d_{51}=0$$
for $d\leq 19$.

\end{corollary}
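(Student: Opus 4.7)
The plan is to reduce Corollary~\ref{cast_bound} to a purely numerical comparison between the linear bound $d \leq \lfloor\tfrac{2g-2}{5}\rfloor$ appearing in the statement and the conic bound $g > \tfrac{d^2+5d+10}{10}$ supplied by Theorem~\ref{main_thm_1.1}. Since Theorem~\ref{main_thm_1.1} asserts $n^d_g = 0$ whenever the strict inequality $g > \tfrac{d^2+5d+10}{10}$ holds, it is enough to verify that under the hypotheses of the corollary every pair $(g,d)$ with $d$ an integer and $d \leq \lfloor\tfrac{2g-2}{5}\rfloor$ in fact satisfies this strict inequality.

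To locate the intersection points of the two bounding curves, I would substitute $d = \tfrac{2g-2}{5}$ into $d^2+5d+10-10g$ and simplify; this yields $\tfrac{4(g-1)(g-51)}{25}$, so the crossings occur exactly at $g=1$ and $g=51$, recovering the two points flagged in the excerpt. For $1 < g < 51$ this expression is negative, meaning the line $d = \tfrac{2g-2}{5}$ lies strictly below the conic in the $d$-direction; combined with $\lfloor\tfrac{2g-2}{5}\rfloor \leq \tfrac{2g-2}{5}$, every integer $d \leq \lfloor\tfrac{2g-2}{5}\rfloor$ satisfies $g > \tfrac{d^2+5d+10}{10}$, and Theorem~\ref{main_thm_1.1} applies. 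For $g \in \{52, 53\}$ the real-valued line has overtaken the conic, but the floor rescues the inequality: a direct check gives $\lfloor\tfrac{2g-2}{5}\rfloor = 20$ in both cases, while $\tfrac{20^2+5\cdot 20+10}{10} = 51 < 52 \leq g$, so the strict inequality still holds for every integer $d \leq 20$.

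The only genuinely subtle case is $g = 51$, where the line and the conic meet at the lattice point $d = 20$: here $\tfrac{20^2+5\cdot 20+10}{10} = 51 = g$, so Theorem~\ref{main_thm_1.1} provides no information at $(g,d) = (51,20)$. For $d \leq 19$ the strict inequality persists and gives $n^d_{51} = 0$ as claimed. Thus the corollary is a repackaging of Theorem~\ref{main_thm_1.1} through elementary arithmetic; there is no real obstacle beyond carefully excluding the single lattice point $(51,20)$ lying exactly on the Castelnuovo conic, which is precisely why the $g = 51$ case must be stated separately.
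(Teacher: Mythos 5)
Your proposal is correct and follows essentially the same route as the paper, which likewise deduces the corollary from Theorem \ref{main_thm_1.1} by checking that the line $d=\tfrac{2g-2}{5}$ and the conic $g=\tfrac{d^2+5d+10}{10}$ meet only at $(g,d)=(1,0)$ and $(51,20)$, so that the conic strictly dominates $\lfloor\tfrac{2g-2}{5}\rfloor$ for all $g\leq 53$ except at the lattice point $(51,20)$. Your explicit factorization $\tfrac{4(g-1)(g-51)}{25}$ and the checks at $g=52,53$ simply make precise what the paper declares ``easy to check.''
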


Using Corollary \ref{cast_bound}, it is enough to compute GW-invariants of quintic 3-fold for $g\leq 50$. 
In order to compute GW-invariants up to $g=53$, we need another non-vanishing result, which also shows that the Castelnuovo bound (Theorem \ref{main_thm_1.1}) is sharp for $d\in 5\ZZ$. %Note that $g=\frac{d^2+5d+10}{10}\in \ZZ$ implies that $d\in 5\ZZ$. 

\begin{theorem}\label{non_vanish}
Let $m\geq 2$ and $d=5m$ be integers. Then we have
\[n_{\frac{d^2+5d+10}{10}}^d=(-1)^{\binom{m+3}{3}-\binom{m-2}{3}+3}\cdot 5\big(\binom{m+3}{3}-\binom{m-2}{3}\big).\]
%\[
%n_{\frac{d^2+5d+10}{10}}^d= \begin{cases} (-1)^{\binom{m+3}{3}+3}\cdot 5\binom{m+3}{3}, & \text{if} ~ 2\leq m\leq 4 \\
%(-1)^{\binom{m+3}{3}-\binom{m-2}{3}+3}\cdot 5\big(\binom{m+3}{3}-\binom{m-2}{3}\big), & \text{if} ~ m\geq 5.
%\end{cases}
%\]
Here if $m-2<3$, we set $\binom{m-2}{3}:=0$.
\end{theorem}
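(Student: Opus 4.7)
The plan is to show that at the Castelnuovo bound $g = (d^2+5d+10)/10$ with $d=5m$, the relevant moduli space is a smooth projective bundle whose signed topological Euler characteristic equals the Gopakumar--Vafa invariant.

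First, I would show that every one-dimensional closed subscheme $C \sst X$ of degree $d = 5m$ and arithmetic genus $g = (d^2+5d+10)/10$ is a complete intersection $C = X \cap H \cap T$, where $H \sst \PP^4$ is a hyperplane and $T \sst \PP^4$ a hypersurface of degree $m$. This should follow from the Castelnuovo-type bound for curves on smooth hypersurfaces that underlies Theorem~\ref{main_thm_1.1}: such a $C$ must be contained in a hyperplane section $S = X \cap H$ (a quintic surface in $H \cong \PP^3$), and the numerical identity
\[p_a(S \cap T) \= \tfrac{1}{2}\cdot 5m(5+m-4)+1 \= \frac{5m^2+5m+2}{2} \= g\]
(valid for $\deg T = m$) together with the sharpness of the bound forces $C$ to be cut out on $S$ by a hypersurface of degree exactly $m$.

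Next I would identify the Hilbert-scheme component $\cH$ parametrizing these curves with a projective bundle over $(\PP^4)^{\vee}$. Since every quintic surface $S \sst \PP^3$ is Cohen--Macaulay, the Koszul-type exact sequence
\[0 \to \oh_{\PP^3}(m-5) \to \oh_{\PP^3}(m) \to \oh_S(m) \to 0\]
gives $h^0(\oh_S(m)) = \binom{m+3}{3} - \binom{m-2}{3}$ uniformly in $[H] \in (\PP^4)^{\vee}$, so $\cH$ is a $\PP^N$-bundle with $N = \binom{m+3}{3} - \binom{m-2}{3} - 1$. In particular $\cH$ is smooth of dimension $N+4 = \binom{m+3}{3} - \binom{m-2}{3} + 3$, with Euler characteristic $\chi(\cH) = 5(N+1) = 5\bigl(\binom{m+3}{3} - \binom{m-2}{3}\bigr)$ by the multiplicativity of $\chi$ over a projective bundle.

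Finally, I would invoke the Maulik--Toda description of $n^d_g$ at the maximal genus (or equivalently, the PT/GV correspondence specialized to top genus): when every pure one-dimensional sheaf in class $d[\ell]$ contributing to GV-invariants at genus $g$ is the structure sheaf of a curve in $\cH$, and $\cH$ is smooth, one has
\[n^d_g \= (-1)^{\dim \cH}\, \chi(\cH),\]
which produces exactly the claimed formula. The hardest step will be justifying this last identification, which decomposes into: (i) verifying smoothness of $\cH$, reducing to $H^1(N_{C/X}) = 0$ for these complete intersection curves via the Koszul resolution of $\oh_C$ in $X$ together with the Calabi--Yau condition; and (ii) ensuring the moduli of pure one-dimensional sheaves at the top genus reduces set-theoretically to $\cH$, ruling out sheaves of the form $\oh_C(D)$ with $\deg D \neq 0$ or sheaves with embedded/non-reduced support of higher genus, which is precisely the sharpness statement of Theorem~\ref{main_thm_1.1}.
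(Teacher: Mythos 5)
Your geometric skeleton matches the paper's: you classify the extremal curves as $(1,m,5)$-complete intersections, exhibit the Hilbert scheme $\cM_d$ as a $\PP^{N}$-bundle over $\PP^4$ with $N=\binom{m+3}{3}-\binom{m-2}{3}-1$ via the Koszul sequence on a quintic surface, and take $(-1)^{\dim}\chi$. The numbers are all correct. But the step you yourself flag as hardest is where the gap lies, and your proposed justification for it does not work. The Maulik--Toda description of $n^d_g$ via perverse sheaves on moduli of pure one-dimensional sheaves is a \emph{conjectural} definition of GV invariants; it is not known to agree with the GW-theoretic definition used here (the expansion \eqref{GW=GV}), so it cannot be invoked. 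And ``the PT/GV correspondence specialized to top genus'' is not an off-the-shelf identity of the form $n^d_g=(-1)^{\dim\cH}\chi(\cH)$: what is available is the GW/PT correspondence of Pandharipande--Pixton, which expresses the \emph{connected} PT series $\log\PT$ in terms of all $n_{g'}^{d'}$. Extracting $n^d_{B(d)}$ from the coefficient of $q^{1-B(d)}t^d$ requires showing that (a) the disconnected contributions (products $\prod_i P_{n_i,d_i}$ with $\sum d_i=d$, $s\geq 2$) vanish, which uses the strict superadditivity of $B(d)-1$ in $d$; (b) the multiple-cover contributions $n_{g}^{d/r}$ for $r\geq 2$ vanish, which uses the inequality $\tfrac{B(d)-1}{r}+1>B(d/r)$ together with the vanishing Theorem \ref{main_thm_1.1}; and (c) the resulting stable-pairs number $P_{1-B(d),d}$ equals the DT invariant $I_{1-B(d),d}$ (via Bridgeland's DT/PT wall-crossing and the emptiness of $P_n(X,d)$ for $n<1-B(d)$), whose virtual count on the smooth Hilbert scheme is the signed Euler characteristic by Thomas/Behrend. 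None of these numerical verifications appear in your sketch, and they are the actual content of the final step.

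A secondary, smaller issue: in your first step you verify that divisors in $|\oh_S(m)|$ \emph{achieve} the genus bound and then assert that ``sharpness forces'' the converse. The converse --- that a curve on an integral quintic surface $S\subset\PP^3$ of degree $5m$ and genus exactly $\tfrac{5m^2+5m+2}{2}$ must lie in $|\oh_S(m)|$ --- is a separate nontrivial claim (Proposition \ref{prop_D} in the paper), proved by locating an actual wall for $I_{C/S}$ tangent to $b=-d/n$ and identifying the destabilizing subobject as $\oh_{\PP^3}(-m)$. You should either prove this or cite an argument for it; it does not follow formally from the upper bound being attained by some family.
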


\begin{remark}
Applying Corollary \ref{cast_bound}, BCOV axioms and the case $m=4$ of Theorem \ref{non_vanish}, physicists Klemm and his group derived $F_g(t)$ for $g\leq 53$ \cite{HKQ09} (see Appendix \ref{sec_appendix}).
\end{remark}

\begin{remark}
The above theorem \ref{non_vanish} is predicted in \cite{PhysRevD.79.066001} for $2\leq m\leq 4$. Our theorem \ref{non_vanish} also covers the case of $m\geq 5$ where $g=\frac{d^2+5d+10}{10}\geq 76$. For the case $m=1$, we have $n^5_6=10$ (see Remark \ref{m=1}).
\end{remark}

\begin{remark}
There are various algorithms to compute higher genus GW-invariants, e.g.~\cite{MP06, FanLee19}. However, these algorithms grow exponentially in complexity and it is difficult to be put into computers to calculate specific invariants. Some computerized results of higher genus invariants using BCOV axioms and Castelnuovo bound (Theorem \ref{main_thm_1.1}) can be found in \cite[Table 2, Table 3]{HKQ09}.
\end{remark}

We will prove several more general results focusing on the bound for the genus of curves in a smooth projective 3-fold that satisfies the conjectural Bogomolov--Gieseker inequality of \cite{bayer2011bridgeland,bayer2016space}.

First, we fix some notations. If $X$ is a smooth projective 3-fold of Picard rank one, we denote the first Chern class of the ample generator of $\Pic(X)$ by $H$. Then we define the degree and index of $X$ by $n:=H^3$ and $K_X=-iH$, respectively.

\begin{theorem}[{Theorem \ref{BMT_thm}}]
Let $X$ be a smooth projective 3-fold of Picard rank one of degree $n$ and index $i$. Assume that $X$ satisfies Conjecture \ref{SBG_conj} for all $(a,b)\in \mathbb{R}_{>0}\times \mathbb{R}$. Then for any one-dimensional closed subscheme $C\subset X$ of degree $d$ and genus $g$, we have
\[g\leq \frac{1}{2n}d^2+\frac{1-i}{2}d+1.\]
\end{theorem}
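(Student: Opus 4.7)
The plan is to apply Conjecture \ref{SBG_conj} to the ideal sheaf $I_C$, viewed as an object of the tilted heart $\Coh^{bH}(X)$ for $b<0$, combined with a wall-crossing analysis. First, from the short exact sequence $0 \to I_C \to \oh_X \to \oh_C \to 0$ and Hirzebruch--Riemann--Roch on $X$ (using $c_1(X)=iH$ and $H^3=n$), I would compute
\[\ch(I_C) \;=\; \bigl(1,\ 0,\ -[C],\ g-1+\tfrac{id}{2}\bigr),\qquad H\cdot[C]=d.\]
Since $I_C$ is a rank-one torsion-free sheaf with trivial determinant, it is $\mu_H$-stable and lies in $\Coh^{bH}(X)$ for every $b<0$.

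Next I would analyze the tilt-stability of $I_C$ as $(a,b) \in \mathbb{R}_{>0}\times\mathbb{R}$ varies. The tilt-slope zero condition $\nu_{a,b}(I_C)=0$ translates into $a^2 = b^2 - 2d/n$. If $I_C$ is $\nu_{a,b}$-semistable at some such $(a,b)$, then Conjecture \ref{SBG_conj} applied directly to $I_C$, after substituting the Chern character and optimizing over $(a,b)$, produces an upper bound on $g$ that implies (in fact is strictly stronger than) the claim. If instead $I_C$ is tilt-unstable, then there is a destabilizing short exact sequence $0 \to F \to I_C \to Q \to 0$ in $\Coh^{bH}(X)$ at some numerical wall, with both $F$ and $Q$ satisfying the Bogomolov--Gieseker inequalities of Conjecture \ref{SBG_conj}.

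The crux of the argument, and the main obstacle, is the wall-analysis: using SBG on both $F$ and $Q$, the classical Bogomolov inequality, and integrality of Chern characters, I expect to classify the admissible destabilizing subobjects $F$ and show that the geometrically relevant case is $F \simeq \oh_X(-kH)$ for some positive integer $k$, with $Q$ a pure two-dimensional sheaf supported on an effective divisor $S \in |kH|$ that contains $C$. Given such an inclusion $C \subset S$ with $[S] = kH$, the Hodge index theorem on $S$ yields $C^2 \leq d^2/(kn)$, and the adjunction formula $K_S = (K_X + kH)|_S = (k-i)H|_S$ gives $2g-2 = C^2 + (k-i)d$. Combining these produces the bound $g \leq \tfrac{d^2}{2kn} + \tfrac{(k-i)d}{2} + 1$; comparing for the different admissible values of $k \geq 1$ (with $k=1$ being the worst case and matching the statement of the theorem) completes the argument.
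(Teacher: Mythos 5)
Your setup is the right one and matches the paper's strategy in outline: compute $\ch(I_C)$, apply Conjecture \ref{SBG_conj} directly to $I_C$ when it is tilt-semistable in the relevant range, and otherwise analyze the destabilizing wall. The genus bound in the semistable case and the identification of $\oh_X(-kH)\hookrightarrow I_C$ with ``$C$ lies on a degree-$k$ hypersurface section'' are both as in the paper. The gap is in your wall analysis: you assert that the geometrically relevant destabilizer is $F\simeq\oh_X(-kH)$, but the actual classification (the paper's Proposition \ref{wall} and Corollary \ref{cor_wall}, after first ruling out higher-rank destabilizers via Lemma \ref{rank_1}) shows that the rank-one subobject is either $\oh_X(-kH)$ \emph{or} $I_{C_1}(-kH)$ for a nonempty curve $C_1\subset C$. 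In the second case there is no reason for the inclusion $I_{C_1}(-kH)\hookrightarrow I_C$ to factor through $\oh_X(-kH)$; indeed $\Hom(\oh_X(-kH),I_C)=\mathrm{H}^0(I_C(kH))$ can vanish, so $C$ need not lie on any hypersurface section and your Hodge-index argument has no surface to run on. This is precisely the hard case. The paper handles it by producing, via the snake lemma, a decomposition $C=C_1\cup C_2$ with $g(C)=g(C_1)+g(C_2)+kd_1-1$, bounding $d_2$ from below by the location of the wall ($d_2>\sqrt{2nd}-n/2$), and then running an induction on the degree so that the error term $d_1(1-d_2/n)$ is nonpositive (with a separate direct application of the conjecture over the uppermost wall when $d_2\leq n-1$). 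None of this is present in your proposal, and without it the proof does not close.

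A secondary issue: even in the case $C\subset S$ with $S\in|kH|$, your use of adjunction and the Hodge index theorem presumes $C$ is a Cartier divisor on a surface where these make sense, whereas $S$ may be singular (it is merely integral, by Picard rank one) and $C$ is an arbitrary one-dimensional closed subscheme, possibly with embedded points and not Cartier. The paper avoids this by applying Conjecture \ref{SBG_conj} to the rank-zero sheaf $I_{C/S}$ at $(a,b)=(0,-\tfrac{d}{kn})$, after checking (Lemma \ref{torsion_no_wall}) that $I_{C/S}$ is tilt-semistable there because $S$ is integral. You should replace the classical surface argument with this, or at least reduce to Cohen--Macaulay curves first (Lemma \ref{cm_lemma}) and justify the intersection-theoretic statements on a singular $S$.
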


For example, the assumptions in the above theorem hold for any Fano 3-fold or abelian 3-fold of Picard rank one (see Theorem \ref{SBG}).

%In particular, if $X$ in the above theorem is Calabi--Yau, we can also obtain the vanishing of GV-invariants by Proposition \ref{BPS_zero} as the quintic case.

\begin{remark}
We should mention that Theorem \ref{BMT_thm} was proved in \cite[Proposition 1.3]{MS20} with an extra assumption that $\ch_2(E)\in \frac{1}{2}H^2\cdot \ZZ$ and $\ch_3(E)\in \frac{1}{6}H^3\cdot \ZZ$ for any $E\in \Coh(X)$ (see \cite[Assumption A]{MS20}). However, as Soheyla Feyzbakhsh pointed out for us that this assumption has not been used in their proof critically, thus Theorem \ref{BMT_thm} has been proved by \cite{MS20} indeed. After the first version of our paper was completed, we received a note \cite{F2022} from her, in which she also sketches a proof of Theorem \ref{BMT_thm} without this extra assumption.
\end{remark}

For a smooth hypersurface $X\subset \PP^4$, it is known that $X$ satisfies the assumption in Theorem \ref{BMT_thm} if $\deg X\leq 4$. However, Conjecture \ref{SBG_conj} only holds when $(a,b)$ is in a restricted region for quintic 3-folds (see Theorem \ref{SBG}), so we can not apply Theorem \ref{BMT_thm} directly. However, we can even get a better bound.

\begin{theorem}[{Theorem \ref{bound_quintic}}]
Let $X\subset \PP^4$ be a smooth hypersurface of degree $n\leq 5$. Then for any one-dimensional closed subscheme $C\subset X$ of degree $d$ and genus $g$, we have
\[g\leq \frac{1}{2n}d^2+\frac{n-4}{2}d+1.\]
More importantly, when $C$ is not contained in any hyperplane section of $X$, we have
    \[g\leq \frac{1}{2n}d^2+(\frac{n-4}{2}-\frac{1}{n})d+2+\frac{1}{n}.\]
\end{theorem}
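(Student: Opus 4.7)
The plan is to separate into the Fano case $n\leq 4$ and the quintic case $n=5$. When $n\leq 4$, the hypersurface $X\subset \PP^4$ is a Fano 3-fold of Picard rank one with $H^3=n$ and index $i=5-n$, and Theorem \ref{SBG} guarantees Conjecture \ref{SBG_conj} for all $(a,b)\in \mathbb{R}_{>0}\times \mathbb{R}$. Theorem \ref{BMT_thm} then yields directly
\[g\leq \tfrac{1}{2n}d^2+\tfrac{1-i}{2}d+1 = \tfrac{1}{2n}d^2+\tfrac{n-4}{2}d+1,\]
which is the first inequality in this range of $n$.

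For the quintic $n=5$, Conjecture \ref{SBG_conj} holds only in a proper subregion of $\mathbb{R}_{>0}\times \mathbb{R}$, so Theorem \ref{BMT_thm} cannot be quoted as a black box. I would rerun its tilt-stability/wall-crossing argument with $\mathcal{I}_C$ regarded as an object of the tilted heart $\Coh^b(X)$, invoking the Bogomolov--Gieseker-type inequality on Jordan--Hölder factors only at parameters $(a,b)$ inside the admissible region established by Theorem \ref{SBG}. The numerical wall at which $\mathcal{I}_C$ is forced to become strictly tilt-semistable, combined with the translation $\ch_3(\mathcal{I}_C)=g-1-d$ and a Riemann--Roch book-keeping, should reproduce the Castelnuovo bound $g\leq (d^2+5d+10)/10$. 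If the extremal wall falls outside the admissible region, evaluating at the boundary of that region should still recover the same inequality.

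For the sharpening under the hypothesis that $C$ is contained in no hyperplane section of $X$, the key extra input is the vanishing $H^0(X,\mathcal{I}_C(1))=0$, equivalently $\Hom(\mathcal{O}_X(-1),\mathcal{I}_C)=0$. This forbids any tilt-semistable subobject numerically equivalent to $\mathcal{O}_X(-1)$ from appearing as a Jordan--Hölder factor of $\mathcal{I}_C$ at the relevant wall, since such a factor would produce a nonzero section of $\mathcal{I}_C(1)$. Discarding this one potential factor allows the wall-crossing to proceed one step further, and in the numerical bookkeeping this manifests precisely as the additional term $-\tfrac{1}{n}d+1+\tfrac{1}{n}$, producing the refined bound.

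The main obstacle I anticipate is the restricted validity of Conjecture \ref{SBG_conj} for the quintic: one must carefully track that the extremal wall for $\mathcal{I}_C$ (respectively, the next wall after excluding the $\mathcal{O}_X(-1)$-factor in the refined case) sits inside the admissible region from Theorem \ref{SBG}, so that the BG-type inequality is genuinely available to bound $\ch_3$ of the Jordan--Hölder factors. A secondary subtlety in the refined bound is verifying that the excluded wall is really the binding one, i.e.~that forbidding an $\mathcal{O}_X(-1)$-factor delivers the claimed linear gain rather than being offset by some other admissible wall nearby; this should follow from an explicit numerical comparison of the competing walls within the admissible region.
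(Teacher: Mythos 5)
Your reduction of the first inequality for $n\leq 4$ to Theorem \ref{BMT_thm} is fine, but the two places where the real work happens are not resolved by your plan. First, the case $n=5$ with $C$ \emph{contained} in a hyperplane section $D$: here $I_C$ can have walls all the way down to $b\approx -d/5$, and the admissible region for the quintic, $a^2>(b-\lfloor b\rfloor)(\lfloor b\rfloor+1-b)$, does not contain (even in its closure) the evaluation points near $a=0$ at non-integral $b$ that the argument of Theorem \ref{BMT_thm} needs. ``Rerunning the argument inside the admissible region'' does not close this case. The paper's actual device is to abandon $I_C$ entirely, pass to the rank-zero sheaf $I_{C/D}$, view $D$ as an integral degree-$n$ surface in $\PP^3$, show $I_{C/D}$ has \emph{no} walls in the range $b\geq -\tfrac{d}{nm}$ (Lemma \ref{torsion_no_wall}), and apply Conjecture \ref{SBG_conj} on $\PP^3$, where it holds for all $(a,b)$ (Proposition \ref{prop_hyperplane}, Corollary \ref{in D}). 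Nothing in your proposal supplies this step, and it is also what delivers the first inequality for curves in hyperplane sections when $n=5$.

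Second, the refined bound. Your mechanism --- use $\Hom(\oh_X(-1),I_C)=0$ to exclude an $\oh_X(-1)$ Jordan--H\"older factor and ``proceed one step further'' --- is not sufficient and is not where the gain $-\tfrac{1}{n}d+1+\tfrac1n$ comes from. Even after excluding $\oh_X(-1)$, Corollary \ref{cor_wall} still permits destabilizers $I_{C_1}(-H)$ with $C_1\subset C$ a nonempty curve, and such walls genuinely occur. The paper's treatment splits into: (i) the wall-free (``1-neutral'') case, where one evaluates $Q_{a,b}(I_C)$ at explicit admissible points (e.g.\ $(a,b)=(\sqrt{(b-\lfloor b\rfloor)(\lfloor b\rfloor+1-b)},\max\{-2,b_d\})$ for $n=5$) and uses integrality of $g$ in a finite case check; and (ii) the case with a wall, where one extracts the decomposition $g(C)=g(C_1)+g(C_2)+d_1-1$ together with the wall-derived inequality $d_2>\sqrt{2nd}-\tfrac n2$, and runs an \emph{induction on the degree} $d$, feeding in the already-proved (unrefined) bound for the lower-degree pieces $C_1,C_2$ --- which may themselves lie in hyperplane sections, so case (i) of the previous paragraph is needed as input. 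The ``numerical comparison of competing walls'' you defer is precisely this induction, and without it the refined inequality (and, for $n\le 4$, even its statement, which Theorem \ref{BMT_thm} does not give) is not established.
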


When $n=1$, i.e.~the case $X=\PP^3$, our theorem recovers Hartshorne's classical result \cite{Hartshorne1994TheGO}.

Moreover, we can describe subschemes of the maximal genus $g=\frac{1}{2n}d^2+\frac{n-4}{2}d+1$ completely.

\begin{theorem}[{Theorem \ref{20_51}}]
Let $X\subset \PP^4$ be a smooth hypersurface of degree $n\leq 5$. Let $C$ be a one-dimensional closed subscheme $C\subset X$ of degree $d>n$ and genus $g$. Then \[g=\frac{1}{2n}d^2+\frac{n-4}{2}d+1\]
if and only if $C$ is a complete intersection of $X$ with $\PP^3$ and a degree $\frac{d}{n}$ hypersurface in $\PP^4$.
\end{theorem}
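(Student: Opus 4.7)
The ``if'' direction is a standard arithmetic-genus computation. If $C = X \cap H \cap F$ with $H \cong \PP^3$ a hyperplane and $F \sst \PP^4$ a degree-$e$ hypersurface for $e := d/n$, then $Y := X \cap H$ is a degree-$n$ surface in $H$ and $C$ is the scheme-theoretic zero locus of $F|_Y$. Working via Hilbert polynomials (to absorb possible singularities of $Y$), adjunction gives $2g - 2 = \big((n-4)h + eh\big)\cdot eh = ne(n+e-4)$, i.e.\ $g = \tfrac{d^2}{2n} + \tfrac{n-4}{2}d + 1$.

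For the converse, I would first reduce to the case $C \sse Y := X \cap H$ for some hyperplane $H \cong \PP^3$. Compare the two bounds of Theorem \ref{bound_quintic}: the first exceeds the second by $\tfrac{d-n-1}{n}$, which is strictly positive for $d \geq n+2$. So equality in the first bound rules out the ``not contained in any hyperplane section'' alternative of the second, forcing $C \sse Y$. The residual case $d = n+1$ does not occur because $\tfrac{(n+1)^2}{2n} + \tfrac{(n-4)(n+1)}{2} + 1$ is directly checked to be a non-integer for every $1 \leq n \leq 5$. An analogous arithmetic check shows integrality of the maximum genus for $d > n$ forces $n \mid d$, and I write $d = ne$.

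With $C \sse Y \sst \PP^3$, the remaining content is to show that $C$ is cut out on $Y$ by a single degree-$e$ equation. Starting from $0 \to \mathcal{O}_{\PP^3}(-n) \to \cI_{C/\PP^3} \to \cI_{C/Y} \to 0$, the ``if'' computation identifies the stated genus with the arithmetic genus of a Cartier divisor of class $eh$ on $Y$, so equality in the bound pins down the Hilbert polynomial of $\cI_{C/Y}$ as that of $\mathcal{O}_Y(-e)$. A Riemann--Roch and Serre vanishing estimate then produces, at the minimal twist, a nonzero section $s \in H^0(\cI_{C/Y}(e))$; lifting $s$ via the surjection $H^0(\mathcal{O}_{\PP^3}(e)) \twoheadrightarrow H^0(\mathcal{O}_Y(e))$ gives a polynomial $F_0$ on $\PP^3$ with $C \sse Y \cap V(F_0)$, and a degree comparison forces scheme-theoretic equality. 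Extending $F_0$ arbitrarily to a degree-$e$ polynomial on $\PP^4$ yields the required $F$ with $C = X \cap H \cap F$.

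The delicate point is the extraction of the section $s$ when $H$ is a ``special'' hyperplane so that $Y$ is singular, reducible, or non-reduced, and when $C$ carries embedded or lower-dimensional components. In that generality $\mathrm{Pic}(Y)$ may be strictly larger than $\ZZ\cdot h$, divisors need not be Cartier, and one must argue scheme-theoretically. I anticipate the cleanest resolution is to trace the equality case through the tilt-stability proof of Theorem \ref{bound_quintic}: equality should force the tilt-destabilising subobject of $\cI_{C/X}$ to be exactly $\mathcal{O}_X(-H)$, and chasing the resulting exact triangle in the tilted heart should identify $\cI_{C/Y}$ with $\mathcal{O}_Y(-e)$ up to torsion, thereby producing the section $s$ and, with it, the complete-intersection structure on $C$.
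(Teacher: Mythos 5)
Your ``if'' direction and your reduction to a hyperplane section are essentially the paper's: comparing the two bounds of Theorem \ref{bound_quintic} (their difference is $\tfrac{d-n-1}{n}$) is exactly how Lemma \ref{extremal_curve} rules out the ``not contained in a hyperplane section'' alternative for $d\geq n+2$. One small error there: your claim that the extremal genus is a non-integer for $d=n+1$ and every $1\leq n\leq 5$ fails at $n=1$, $d=2$, where $g=\tfrac{4}{2}-3+1=0$ is an integer; the paper disposes of this case separately by classifying degree-$2$, genus-$0$ subschemes as plane conics (Lemma \ref{snna}), and the theorem still holds there.

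The serious gap is in the core of the converse: producing the degree-$e$ hypersurface cutting out $C$ on $Y$. Knowing that $\cI_{C/Y}$ has the Hilbert polynomial of $\oh_Y(-e)$ does not give a section of $\cI_{C/Y}(e)$. Writing $h^0=\chi+h^1-h^2$, you must bound $h^2(Y,\cI_{C/Y}(e))$, and from the restriction sequence this group receives $H^1(C,\oh_C(e))\cong H^0(C,\omega_C(-e))^{\ast}$; for $n=5$ this is genuinely nonzero for the extremal curves ($\omega_C\cong\oh_C(e+1)$ once you know $C$ is a complete intersection), and bounding it \emph{before} knowing the structure of $C$ is circular. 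Your fallback---``trace the equality case through the tilt-stability proof''---is the right instinct but is aimed at the wrong object and is not executed: a destabilizer $\oh_X(-H)$ of $I_{C/X}$ would only re-derive the hyperplane containment you already have. What the paper actually does (Proposition \ref{prop_D}) is run the wall analysis on $\PP^3$ for the \emph{torsion} sheaf $I_{C/D}$: extremality of $g$ forces, via Conjecture \ref{SBG_conj} applied just below $b=-d/n$ together with Lemma \ref{torsion_no_wall}, an actual wall tangent to the line $b=-d/n$; its destabilizer $A$ has $\ch_0(A)=1$ by Lemma \ref{rank_0}, the wall equation plus $\Delta_H(A)\geq 0$, $\Delta_H(B)\geq 0$ and positivity of the imaginary parts force $\ch_{\leq 2}(A)=\ch_{\leq 2}(\oh_{\PP^3}(-d/n))$ (this is also where $n\mid d$ drops out); then $\Delta_H(A)=0$ makes $A$ Gieseker-semistable (Lemmas \ref{delta_0} and \ref{bms lemma 2.7}), and applying the generalized Bogomolov--Gieseker inequality to both $A$ and $B$ pins down $\ch_3(A)$, whence $A\cong\oh_{\PP^3}(-d/n)$ and the nonzero map $A\to I_{C/D}$ is precisely the section you need. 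Without this (or an equivalent) argument the converse is incomplete.
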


Using Theorem \ref{20_51}, we can describe the corresponding Hilbert schemes explicitly in Proposition \ref{moduli_fibre}.

Note that most of the results in Section \ref{sec_bound} are true in positive characteristic as well (Remark \ref{char_p}) and our techniques can work in more general settings (Remark \ref{main_rmk}).

\subsection{Method}

Our approach is based on the Pandharipande--Thomas formulation of GV-invariants from PT theory. The Pandharipande--Thomas (PT) invariants of a quintic 3-fold $X$ are defined in \cite{PT07}, counting stable pairs in the derived category. For any pair of integers $n$ and $d>0$, the corresponding moduli space of stable pairs $P_n(X, d)$ is a projective scheme carries a zero-dimensional virtual cycle $[P_n(X, d)]^{vir}$. Then we define \emph{Pandharipande--Thomas invariants} by
\[P_{n,d}:=\int_{[P_n(X, d)]^{vir}} 1.\]
The generating series of PT-invariants is
\[\PT(q, t):=1+\sum_{d> 0}\sum_{n\in \ZZ} P_{n, d} q^n t^{d}.\]
We define the \emph{connected} series of PT-invariants by $\sF_{P}(q, t):=\log \PT(q, t)$. A geometric method to define these connected invariants is unknown yet.
Since the GW/PT correspondence \cite{PP12} holds for $X$ and we know that $n^{\beta}_g=0$ for any fixed $\beta$ and $g\gg 0$ by \cite{DIW21}, after changing the variable $q=-\exp(i\lambda)$ in \eqref{GW=GV}, we have
\begin{equation} \label{GV_PT_eq}
    \sF_{P}(q, t)=\sum_{g\geq 0} \sum_{d>0} \sum_{r\geq 1} n_g^{d} \frac{(-1)^{g-1}}{r} \big((-q)^{\frac{r}{2}}-(-q)^{-\frac{r}{2}}\big)^{2g-2} t^{rd}.
\end{equation}

%Now, we can state our main results. To be precise, the GV-invariants $n_g^d$ in the following theorems are referred to the one from Pandharipande--Thomas formulation \eqref{GV_PT_eq}. It is, of course, equal to the one from the original definition \eqref{GW=GV} for the examples we considered.

Let us describe our strategy briefly.

First, we bound the genus of one-dimensional closed subschemes in $X$ (Theorem \ref{bound_quintic}). The main technique is wall-crossing of Bridgeland (weak) stability conditions and Bayer--Macr\'i--Toda's generalized Bogomolov--Gieseker inequality (Theorem \ref{SBG}). A naive observation is that if the ideal sheaf $I_C$ of a curve $C$ in $X$ has no wall in a special range, then the bound in Theorem \ref{bound_quintic} follows from applying Theorem \ref{SBG} to $I_C$ and specific $(a,b)$.

%Unfortunately, there exist many walls for $I_C$ in general and we can not cross them all. The upper-most wall is given $\oh_X(-H)$, and $I_C$ can not cross this wall if and only if $C$ is contained in a hyperplane section of $X$. In this case, we can obtain the bound for the genus by applying Theorem \ref{SBG} and wall-crossing to the ideal sheaf of $C$ in this hyperplane. 

Unfortunately, there exist many walls for $I_C$ in general and the situation becomes more complicated. But a detailed analysis shows that either $C$ is in a hyperplane section or we can decompose $C$ as a union of subschemes $C=C_1\cup C_2$, each of them is one-dimensional and intersects with each other properly. Moreover, the length of $C_1\cap C_2$ can be controlled by wall-crossing. In the first case, the bound can be deduced from applying wall-crossing to some rank zero sheaves. In the latter case, using the formula
\[g(C)=g(C_1)+g(C_2)+\mathrm{length}(C_1\cap C_2)-1,\]
to bound $g(C)$ we only need to control $g(C_1)+g(C_2)$. This will be worked out by an induction argument combined with a bound for the degree of $C_2$ obtained from wall-crossing.

Now the first step gives the emptiness of moduli spaces $P_n(X,d)$, using which we can prove the vanishing of PT-invariants (Corollary \ref{PT_zero}) and \emph{connected} PT-invariants (Proposition \ref{logPT_zero}). Finally, a calculation of the generating series \eqref{GV_PT_eq} implies Theorem \ref{main_thm_1.1}.

For the non-vanishing result Theorem \ref{non_vanish}, we apply wall-crossing of weak stability conditions on $\PP^3$. The condition of a curve with the maximal genus implies the existence of certain walls, from which we can classify such curves (Theorem \ref{20_51}). Therefore, we can describe their moduli spaces explicitly (Proposition \ref{moduli_fibre}) and Theorem \ref{non_vanish} follows from computing the topological Euler characteristic of moduli spaces.

\subsection{Plan of the paper}
In Section \ref{sec:stability_condition}, we will review the general theory of stability conditions and wall-crossing.
The wall structure and explicit geometry of ideal sheaves of curves on 3-folds will be analyzed 
in Section \ref{wall_sec}. Then in Section \ref{sec_bound}, we will prove several bounds for the genus of curves on a class of smooth projective 3-folds (Theorem \ref{BMT_thm}). At the end of Section \ref{sec_4.1}, a bound for the genus of any one-dimensional closed subscheme in a smooth hypersurface of degree $\leq 5$ will be given (Theorem \ref{bound_quintic}). The main theorem  \ref{main_thm_1.1} will be proved in Section \ref{sec_vanish_GV}. The computation of  GV-invariants at Castelnuovo bound (Theorem \ref{non_vanish}) will be carried out in Section \ref{sec_nonvanish}. Since mathematical readers may not be familiar with the literature in physics, we include an appendix (due to Shuai Guo) to explain the reason that BCOV axioms together with the Castelnuovo bound and Theorem \ref{non_vanish} can compute $F_g$ up to $g\leq 53$.

\subsection*{Acknowledgements} 

The current work is completed in Institute for Advanced Study in Mathematics at Zhejiang University. Both authors thank the institute for financial support and wonderful research environment. We would like to thank Shuai Guo for contributing the appendix and many useful discussions. The first author would like to thank Naoki Koseki and Songtao Ma for useful comments. The second author would like to thank Albrecht Klemm and Rahul Pandharipande for valuable discussions.

\subsection*{Notation and conventions} \leavevmode

Throughout this paper, we work over the field of complex numbers $\mathbb{C}$.  We say a one-dimensional proper scheme $C$ is a \emph{curve} if it is Cohen--Macaulay. This is equivalent to saying that $C$ has no isolated or embedded points, or in other words, $\oh_C$ is a pure sheaf. 

We denote by $I_{Y/X}$ the ideal sheaf of a closed subscheme $Y$ in a scheme $X$. If $X$ is a given 3-fold, then we denote by $I_{Y}:=I_{Y/X}$ for simplicity.

For a line bundle $\mathcal{L}$, the linear series associated to $\mathcal{L}$ is denoted by $|\mathcal{L}|$. The Euler characteristic of a coherent sheaf $F$ on $X$ is denoted by $\chi(F):=\sum_{i} (-1)^i \dim\mathrm{H}^i(X,F)$.

Let $X$ be a smooth projective variety. If $X$ has Picard rank one, we denote by $\oh_X(1)$ the ample generator of $\Pic(X)$. The corresponding divisor will be denoted by $H:=c_1(\oh_X(1))$. If $C\subset X$ is a one-dimensional closed subscheme, we denote the (arithmetic) genus and degree of $C$ by $g(C)$ and $d(C):=C\cdot H$, respectively. For a 1-cycle $\beta$ on $X$, we define $\deg_H \beta:=H\cdot \beta$.

If $X$ is a smooth projective 3-fold of Picard rank one, we define the degree and index of $X$ by $n:=H^3$ and $K_X=-iH$, respectively.

We denote the subset of $H_2(X, \mathbb{Z})$ consists of curve classes by $H_2(X, \mathbb{Z})_{c}$. For example, if $X\subset \PP^4$ is a smooth hypersurface, then we have $H_2(X, \mathbb{Z})_{c}=\mathbb{Z}_{\geq 1}\cdot L$, where $L$ is the class of lines on $X$. %If $H_2(X, \mathbb{Z})\cong \ZZ$, we will write a class $\beta$ as $d$, where $d$ is the image of $\beta$ in $\ZZ$. 

For an object $E\in \D^b(X)$, we denote the $i$-th Chern character of $E$ by $\ch_i(E)$. For an integer $n\geq 0$, the $n$-truncated Chern character is defined by
    \[\ch_{\leq n}(E):=\big(\ch_0(E), \ch_1(E),\dots, \ch_n(E)\big).\]
For a real number $b\in \mathbb{R}$ and a divisor $H$, the $b$-twisted Chern character is
    \[\ch^{b H}(E):=\exp(-b H)\cdot \ch(E).\]
We will always take $H$ to be a fixed ample divisor and write $\ch^{b}_i(E):=\ch^{bH}_i(E)$ for simplicity. For any integer $n\geq 0$, we define 
\[\ch_{H, \leq n}(E):=\big(\ch_{H,0}(E), \ch_{H,1}(E),\dots, \ch_{H,n}(E)\big)\in \QQ^{n+1},\]
where
\[\ch_{H, i}(E):=\frac{H^{\dim X-i}\ch_i(E)}{H^{\dim X}}\in \mathbb{Q}.\]

\section{Stability conditions} \label{sec:stability_condition}

Let $X$ be a smooth projective variety and $\D^b(X)$ be the bounded derived category of coherent sheaves on $X$.
In this section, we recall the construction of (weak) Bridgeland stability conditions on $\D^b(X)$, and the notion of tilt-stability introduced in \cite{bridgeland, bayer2011bridgeland}. 
%We will discuss several properties of tilt-stability conditions, which will be used later.

\subsection{Weak stability conditions}

We denote by $\KK(X)$ the K-group of $\D^b(X)$. We fix a surjective morphism $v \colon \KK(X) \twoheadrightarrow \Lambda$ to a finite rank lattice. 

%\begin{Def}
%The \emph{heart of a bounded t-structure} on $\D^b(X)$ is an abelian subcategory $\cA \sst \D^b(X)$ such that the following conditions are satisfied:
%\begin{enumerate}[(i)]
%    \item for any $E, F \in \cA$ and $n <0$, we have $\Hom(E, F[n])=0$;
%    \item for any object $E \in \D^b(X)$ there exist objects $E_i \in \cA$ and maps
%    \[ 0=E_0 \xrightarrow{\phi_1} E_1 \xrightarrow{\phi_2} \D^b(X)ots \xra{\phi_m} E_m=E \]
%    such that $\cone(\phi_i) = A_i[k_i]$ where $A_i \in \cA$ and the $k_i$ are integers such that $k_1 > k_2 > \D^b(X)ots > k_m$.
%\end{enumerate}
%\end{Def}

\begin{definition}
Let $\cA$ be an abelian category and $Z \colon \KK(\cA) \ra \CC$ be a group homomorphism. We call $Z$ a \emph{weak stability function} on $\cA$ if for any $E \in \cA$ we have $\Im Z(E) \geq 0$ and if $\Im Z(E) = 0$ then $\Re Z(E) \leq 0$. If furthermore for $0\neq E\in \cA$ we have $\Im Z(E) = 0$ implies that $\Re Z(E) < 0$, then we call $Z$ a \emph{stability function} on $\cA$.
\end{definition}

\begin{definition}
A \emph{weak stability condition} on $\D^b(X)$ is a pair $\sigma = (\cA, Z)$ where $\cA$ is the heart of a bounded t-structure on $\D^b(X)$, and $Z \colon \Lambda \ra \CC$ is a group homomorphism such that 
\begin{enumerate}[(i)]
    \item the composition $Z \circ v \colon \KK(\cA) \cong \KK(X) \ra \CC$ is a weak stability function on $\cA$. From now on, we write $Z(E)$ rather than $Z(v(E))$.
\end{enumerate}
For any $E \in \cA$, we define the \emph{slope} of $E$ with respect to $\sigma$ as
\[
\mu_\sigma(E) := \begin{cases}  - \frac{\Re Z(E)}{\Im Z(E)}, & \text{if} ~ \Im Z(E) > 0 \\
+ \infty , & \text{otherwise}.
\end{cases}
\]
We say an object $0 \neq E \in \cA$ is $\sigma$-(semi)stable if $\mu_\sigma(F) < \mu_\sigma(E/F)$ (respectively, $\mu_\sigma(F) \leq \mu_\sigma(E/F)$) for all proper subobjects $F \sst E$. 
\begin{enumerate}[(i), resume]
    \item Any object $E \in \cA$ has a Harder--Narasimhan filtration in terms of $\sigma$-semistability defined above.
    \item There exists a quadratic form $Q$ on $\Lambda \otimes_{\ZZ} \mathbb{R}$ such that $Q|_{\ker Z}$ is negative definite, and $Q(E) \geq 0$ for all $\sigma$-semistable objects $E \in \cA$. This is known as the \emph{support property}.
\end{enumerate}
If the composition $Z \circ v$ is a stability function, then we say $\sigma$ is a \emph{stability condition} on $\D^b(X)$.
\end{definition}

%For this paper, we let $\Lambda$ be the numerical K-group $\cN(\D^b(X))$ which is $K_0(\D^b(X))$ modulo the kernel of the Euler form $\chi(E, F) = \sum_i (-1)^i \ext^i(E, F)$.

\subsection{Tilt-stability}
Let $(X, H)$ be an $n$-dimensional polarised smooth projective variety. Starting with the classical slope stability, where  
\[
\mu_H(E) := \begin{cases}  \frac{H^{n-1}\ch_1(E)}{H^n\ch_0(E)}, & \text{if} ~ \ch_0(E)\neq 0 \\
+ \infty , & \text{otherwise}.
\end{cases}
\]
for any $E\neq 0\in \Coh(X)$, we can form the once-tilted heart $\cA^{b}$ for any $b\in \mathbb{R}$ as follows.
For the slope we just defined, every sheaf $E$ has a Harder-Narasimhan filtration. Its graded pieces have slopes whose maximum we denote by $\mu^+_H(E)$ and minimum by $\mu^-_H(E)$. Then for any $b\in \mathbb{R}$, we define an abelian category $\cA^b\subset \D^b(X)$ as
\[\cA^b:=\{E\in \D^b(X) \mid \mu^+_H\big(\cH^{-1}(E)\big)\leq b, b<\mu^-_H\big(\cH^0(E)\big),~\text{and}~\cH^i(E)=0~\text{for}~i\neq 0, -1\}.\]

It is a result of \cite{happel1996tilting} that the abelian category $\cA^b$ is the heart of a bounded t-structure on $\D^b(X)$ for any $b\in \mathbb{R}$.

Now for $E \in \cA^{b}$, we define
\[ Z_{a, b}(E) := \frac{1}{2} a^2 H^n \ch_0^{b H}(E) - H^{n-2} \ch_2^{b H}(E) + \mathfrak{i} H^{n-1} \ch_1^{b H}(E). \]

\begin{proposition}[{\cite{bayer2011bridgeland, bayer2016space}}] \label{tilt_stab}
Let $a>0$ and $b \in \mathbb{R}$. Then the pair $\sigma_{a, b} = (\cA^{b} , Z_{a, b})$ defines a weak stability condition on $\D^b(X)$. The quadratic form $Q$ is given by the discriminant
\begin{equation} \label{BG}
    \Delta_H(E) = \big(H^{n-1} \ch_1(E)\big)^2 - 2 H^n \ch_0(E) H^{n-2} \ch_2(E).
\end{equation}
The weak stability conditions $\sigma_{a, b}$ vary continuously as $(a, b) \in \mathbb{R}_{>0} \times \mathbb{R}$ varies. %Furthermore, for any $v \in \Lambda^2_H$ there is a locally finite wall-and-chamber structure on $\mathbb{R}_{>0} \times \mathbb{R}$ controlling stability of objects with class $v$.
\end{proposition}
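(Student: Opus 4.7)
The plan is to verify the three defining properties of a weak stability condition in turn, namely that $Z_{a,b}$ restricted to $\cA^b$ is a weak stability function, that Harder--Narasimhan filtrations exist, and that the support property holds with the discriminant as the quadratic form, then conclude continuity by inspection of the formulas.

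First I would check that $Z_{a,b}\colon \Lambda \to \CC$ defines a weak stability function on $\cA^b$. Given $E \in \cA^b$, the standard triangle produces a short exact sequence
\[0 \to \cH^{-1}(E)[1] \to E \to \cH^0(E) \to 0\]
in $\cA^b$, so it suffices to analyse the imaginary part $\Im Z_{a,b}(\,\cdot\,) = H^{n-1}\ch_1^{bH}(\,\cdot\,)$ on the two pieces. On each $\mu_H$-HN factor $F$ of $\cH^0(E)$ we have $\mu_H(F) > b$, which gives $H^{n-1}\ch_1^{bH}(F) \geq 0$; on each HN factor $G$ of $\cH^{-1}(E)$ we have $\mu_H(G) \leq b$, which gives $H^{n-1}\ch_1^{bH}(G[1]) \geq 0$. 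Thus $\Im Z_{a,b}(E) \geq 0$. When $\Im Z_{a,b}(E) = 0$ all HN factors of $\cH^0(E)$ either have $\mu_H = b$ (necessarily $\mu_H$-semistable) or are torsion supported in codimension $\geq 2$, while $\cH^{-1}(E)$ must be zero except possibly for a $\mu_H$-semistable piece of slope exactly $b$; in every case one checks directly that $\Re Z_{a,b} \leq 0$, using positivity of $a$ together with $H^{n-2}\ch_2 \leq \tfrac{1}{2H^n}(H^{n-1}\ch_1)^2$ on $\mu_H$-semistable sheaves, which is the classical Bogomolov--Gieseker inequality.

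Next I would establish existence of HN filtrations. The tilted heart $\cA^b$ is noetherian, which is a well-known consequence of tilting from $\Coh(X)$ via a torsion pair (\cite{happel1996tilting}); combined with the fact that the image of $Z_{a,b}\circ v$ is a discrete subset of $\CC$ intersected with any strip of bounded imaginary part (because $\Lambda$ has finite rank and $\ch_0, H^{n-1}\ch_1$ take values in a discrete set after fixing $\Im Z_{a,b}$), the standard Bridgeland argument \cite{bridgeland} shows HN filtrations exist for $\sigma_{a,b}$-semistability.

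Then I would verify the support property with $Q = \Delta_H$. The key step is to show that for any $\sigma_{a,b}$-semistable $E \in \cA^b$, one has $\Delta_H(E) \geq 0$. For this I would reduce to the slope-semistable case: if $E$ is a $\mu_H$-semistable sheaf, the classical Bogomolov--Gieseker inequality gives $\Delta_H(E) \geq 0$. The tilt-semistable objects in $\cA^b$ can be broken into pieces whose cohomology sheaves are $\mu_H$-semistable of slope compatible with $b$, and $\Delta_H$ is invariant under the shift and additive in exact sequences in a way that respects semistability, so one deduces $\Delta_H(E) \geq 0$ in general; the main observation is that $\Delta_H$ depends only on $\ch_{\leq 2}^{bH}$ since it is invariant under twisting by $bH$, and $\ker Z_{a,b}$ corresponds to vanishing of $\ch_{H,0}, \ch_{H,1}, \ch_{H,2}$, on which $-\Delta_H$ becomes a negative-definite form in $(\ch_{H,0}, \ch_{H,1})$ for $a>0$. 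The main obstacle here is the delicate interplay between tilting and the Bogomolov--Gieseker inequality when $\cH^{-1}(E)\neq 0$, which requires carefully tracking signs and the contribution of $\cH^0$ supported in codimension $\geq 2$.

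Finally, continuous variation in $(a,b) \in \mathbb{R}_{>0}\times \mathbb{R}$ follows directly: the formula for $Z_{a,b}$ is a polynomial in $a$ and $b$ with values in $\CC$, so it varies continuously in $\Hom(\Lambda, \CC)$, while the support property holds with a uniform quadratic form $Q=\Delta_H$ independent of $(a,b)$. By the deformation theory of weak stability conditions (an adaptation of \cite{bridgeland}), this implies continuity of the whole family $\sigma_{a,b}$.
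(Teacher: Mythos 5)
The paper does not prove Proposition \ref{tilt_stab}: it is quoted directly from \cite{bayer2011bridgeland,bayer2016space}, so your proposal has to be measured against the standard proof in those references rather than against anything in this article. Your treatment of the weak stability function, of Harder--Narasimhan filtrations, and of continuity follows that standard proof in outline: positivity of $\Im Z_{a,b}$ via the $\mu_H$-HN factors of $\cH^0(E)$ and $\cH^{-1}(E)$, the classical Bogomolov--Gieseker inequality when $\Im Z_{a,b}=0$, and noetherianity of the tilted heart are all the right ingredients. Two cautions: HN factors of $\cH^0(E)$ with $\mu_H=b$ and positive rank cannot occur, because the definition of $\cA^b$ imposes the strict inequality $b<\mu_H^-(\cH^0(E))$ --- and it is essential that they cannot, since for such a factor $T$ one would get $\Re Z_{a,b}(T)\geq \tfrac{1}{2}a^2H^n\ch_0(T)>0$, contradicting the weak positivity you are trying to prove; and for irrational $b$ the image of $\Im Z_{a,b}$ is \emph{not} discrete, so the HN property requires the more careful argument of \cite[Appendix B]{bayer2016space} rather than the bare ``noetherian plus discrete image'' criterion.

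The genuine gap is in the support property. You assert that $\Delta_H$ is ``additive in exact sequences in a way that respects semistability'' and use this to propagate $\Delta_H\geq 0$ from $\mu_H$-semistable sheaves to arbitrary tilt-semistable objects. But $\Delta_H$ is a quadratic form: for $0\to F\to E\to G\to 0$ one has
\[\Delta_H(E)=\Delta_H(F)+\Delta_H(G)+2\Big(H^{n-1}\ch_1(F)\cdot H^{n-1}\ch_1(G)-H^n\ch_0(F)\cdot H^{n-2}\ch_2(G)-H^n\ch_0(G)\cdot H^{n-2}\ch_2(F)\Big),\]
and the cross term has no sign in general; moreover the cohomology sheaves of a tilt-semistable object need not be $\mu_H$-semistable, so there is no direct reduction to the classical inequality. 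The actual proof establishes $\Delta_H\geq 0$ first in the large-volume regime $a\gg 0$, where tilt-semistable objects are $2$-Gieseker-semistable sheaves or shifts thereof (Lemma \ref{bms lemma 2.7}) and classical Bogomolov--Gieseker applies, and then descends through the wall-and-chamber structure. The step your sketch is missing is the key lemma that when $E$ is destabilized along a wall by $F\hookrightarrow E\twoheadrightarrow G$, the classes of $F$ and $G$ lie on a common numerical wall, and for such classes with $\Delta_H(F),\Delta_H(G)\geq 0$ one has the \emph{super}-additivity $\Delta_H(E)\geq\Delta_H(F)+\Delta_H(G)$; this comes from analyzing the signature of $\Delta_H$ on the $2$-plane spanned by the two classes, which meets $\ker Z_{a,b}$ (a line on which $\Delta_H=-a^2(H^n\ch_0)^2<0$, not the zero subspace as your phrasing suggests). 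This is \cite[Appendix A]{bayer2016space}, and it is exactly the inequality recorded as part (f) of Proposition \ref{wall_prop} in this paper; without it the induction over walls does not close.
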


The weak stability conditions defined above are called \emph{tilt-stability conditions}. We denote the slope function of $\sigma_{a, b}$ by $\mu_{a, b}(-)$.

%We now state a useful result relates 2-Gieseker-stability to tilt-stability. 

\begin{lemma} [{\cite[Lemma 2.7]{bayer2016space}, \cite[Proposition 4.8]{bayer2020desingularization}}] \label{bms lemma 2.7}
Let $E \in \D^b(X)$ be an object and $\mu_H(E)>b$. Then $E\in \cA^{b}$ and $E$ is $\sigma_{a, b}$-(semi)stable for $a \gg 0$ if and only if $E$ is a 2-Gieseker-(semi)stable sheaf.

%\begin{enumerate}
    %\item If $H^2 \ch_1^b(E) > 0$, then $E\in \Coh^{b}(X)$ is $\sigma_{a, b}$-(semi)stable for $a \gg 0$ if and only if $E$ is 2-Gieseker-(semi)stable;
    %\item If $H^2 \ch_1^b(E) \leq 0 $ and $E$ is 2-Gieseker-semistable, then $E[1] \in \Coh^b(X)$. Moreover, if $E$ is reflexive, then $E[1]$ is $\sigma_{a, b}$-stable for $a \gg 0$.
%\end{enumerate}

\end{lemma}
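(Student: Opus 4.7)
The plan is to prove the equivalence by analyzing the asymptotic behavior of the tilt-slope $\mu_{a,b}$ as $a\to\infty$, using the leading $a^2$-term to recover slope-stability and the subleading term to recover the $\ch_2$-comparison that upgrades slope to $2$-Gieseker stability.

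First I would establish the heart membership. If $E$ is a $2$-Gieseker semistable sheaf, then $E$ is in particular slope-semistable and torsion-free, so $\mu_H^-(E)=\mu_H(E)>b$, placing $E\in\cA^b$. Conversely, if $E\in\cA^b$ is $\sigma_{a,b}$-semistable for all $a\gg 0$, I would show that $\cH^{-1}(E)=0$ by contradiction: any nonzero sheaf $T\subset\cH^{-1}(E)$ with $\mu_H^+(T)\le b$ gives a subobject $T[1]\hookrightarrow E$ in $\cA^b$ whose leading $a^2$-coefficient of $\mu_{a,b}$ tends to $+\infty$, which destabilizes $E$ for $a$ large.

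Next I would analyze subobjects. Given a short exact sequence $0\to F\to E\to Q\to 0$ in $\cA^b$ with $E$ torsion-free, the long exact cohomology sequence yields $\cH^{-1}(F)=0$ and
\[0\to\cH^{-1}(Q)\to F\to E\to\cH^0(Q)\to 0,\]
so $F$ is a sheaf, the image $F':=F/\cH^{-1}(Q)\hookrightarrow E$ is a genuine subsheaf, and $\cH^{-1}(Q)$ is a sheaf with $\mu_H^+\le b$. Expanding the real and imaginary parts of $Z_{a,b}$ one gets
\[\mu_{a,b}(F)=-\frac{a^2}{2}\cdot\frac{H^n\,\ch_0^{bH}(F)}{H^{n-1}\,\ch_1^{bH}(F)}+\frac{H^{n-2}\,\ch_2^{bH}(F)}{H^{n-1}\,\ch_1^{bH}(F)},\]
so the comparison $\mu_{a,b}(F)\lessgtr\mu_{a,b}(E/F)$ is governed at leading order in $a^2$ by the slope comparison of $F'$ and $E$, and at subleading order (when the slopes coincide) by precisely the renormalized $\ch_2$-ratio whose comparison defines $2$-Gieseker stability.

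Combining the two directions, for $\Leftarrow$ the leading term gives $\mu_{a,b}(F)<\mu_{a,b}(E/F)$ as soon as $\mu_H(F')<\mu_H(E)$ (automatic from slope-semistability of $E$), while in the equal-slope case the subleading $\ch_2$-term is supplied by the $2$-Gieseker inequality on $F'\subset E$; the extra contribution of $\cH^{-1}(Q)$ can only help because it has slope $\le b<\mu_H(E)$. For $\Rightarrow$, I would test $\sigma_{a,b}$-semistability on honest subsheaves $F'\subset E$ (which lie in $\cA^b$ once slope-semistability of $E$ has been obtained by the same asymptotic) and read off the slope comparison from the $a^2$-term and the $2$-Gieseker comparison from the constant term.

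The main obstacle is handling $\cH^{-1}(Q)$ cleanly, since it obstructs $F\subset E$ in $\cA^b$ from being literally a subsheaf and its $\ch_2^{bH}$-contribution could in principle compete with that of $F'$. I would control it via the support property, the Bogomolov--Gieseker inequality $\Delta_H\ge 0$, and the fact that its slope lies in $(-\infty,b]$ while $\mu_H(E)>b$, so that in the $1/a^2$-expansion it never worsens the sign of $\mu_{a,b}(F)-\mu_{a,b}(E/F)$. Once this bookkeeping is in place, the equivalence falls out by reading off the leading and subleading terms of the tilt-slope expansion.
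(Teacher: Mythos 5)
Your sketch is the standard large-volume-limit argument, which is exactly how the cited references ([BMS, Lemma 2.7] and [BLMNPS, Proposition 4.8]) prove this statement; the paper itself gives no proof and simply cites it. The plan is correct: the leading $a^2$-term of $\mu_{a,b}$ recovers the slope comparison, the subleading term recovers the $\ch_2$-comparison, and your handling of $\cH^{-1}(Q)$ (its slope is $\leq b<\mu_H(E)$, so by the see-saw property it can only lower $\mu_H(F)$, and it must vanish in the equal-slope case) is precisely the bookkeeping the standard proof carries out.
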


%\begin{lemma} [{\cite[Lemma 2.7]{bayer2016space}, \cite[Proposition 4.9]{bayer2020desingularization}}]
%Let $E\in \cA^{b}$ be $\sigma_{a, b}$-semistable for $b\geq \mu_H(E)$ and $a \gg 0$. Then $\cH^{-1}(E)$ is a torsion-free slope-semistable sheaf and $\cH^0(E)$ is supported in dimension $\leq 1$. If $b>\mu_H(E)$ and $a>0$, then $\cH^{-1}(E)$ is reflexive.

%\end{lemma}

\subsection{The generalised Bogomolov--Gieseker inequality} %\zy{More readable???}

There is another Bogomolov--Gieseker-type inequality, which is conjectured by \cite{bayer2011bridgeland, bayer2016space} in order to construct stability conditions on $\D^b(X)$.

\begin{conjecture} [{\cite[Conjecture 4.1]{bayer2016space}}] \label{SBG_conj}
Let $(X, H)$ be a polarised smooth projective 3-fold. Assume that $E$ is any $\sigma_{a, b}$-semistable object. Then
\[Q_{a, b}(E):=a^2 \Delta_H(E)+4\big(H \ch_2^{bH}(E)\big)^2-6\big(H^2\ch_1^{bH}(E)\big) \ch_3^{bH}(E)\geq 0.\]
\end{conjecture}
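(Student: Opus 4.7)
The statement is the generalised Bogomolov--Gieseker (BMT) inequality of \cite{bayer2016space}, which is a deep open conjecture in general and the main subject of a large body of recent work. Accordingly, the plan I would follow is not a single clean argument but the wall-crossing reduction strategy that has succeeded in the known cases (Fano 3-folds of Picard rank one and abelian 3-folds), referenced as Theorem \ref{SBG} later in the paper.

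First, I would exploit the local finiteness of the wall-and-chamber decomposition of $\{\sigma_{a,b}\}_{(a,b)\in\mathbb{R}_{>0}\times\mathbb{R}}$ together with the continuity of $Q_{a,b}$ in $(a,b)$. A standard argument shows that, given the classical Bogomolov--Gieseker inequality \eqref{BG} on surfaces, it suffices to verify $Q_{a,b}(E)\geq 0$ on strictly $\sigma_{a,b}$-semistable objects sitting on walls; one can then induct on the ``mass'' of the Jordan--Holder factors, which are semistable for nearby tilt-stability conditions and have strictly smaller complexity. Thus the problem is reduced to checking $Q_{a,b}$ on a much more restricted class of objects.

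Second, I would push the wall-crossing to the limiting regime $a\to 0^+$ along a path on which $Z_{a,b}(E)\to 0$. In that limit the $\sigma_{a,b}$-semistable objects degenerate, in a controlled way, either to shifts of classical $\mu_H$-semistable sheaves --- where the inequality \eqref{BG} applied to $\Delta_H$ already provides the required control of $(\ch_0,\ch_1,\ch_2)$ --- or to torsion sheaves supported on a divisor $D\subset X$, in which case restriction to a general hyperplane section and Bogomolov--Gieseker on the surface handle the lower Chern characters. The remaining obstruction is the cubic term $\ch_3$, which I would bound by a case-specific geometric input: Kodaira vanishing combined with Hirzebruch--Riemann--Roch and Serre duality in the Fano setting, or translation invariance and Fourier--Mukai equivalence for abelian 3-folds.

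The hard part, and the precise reason the statement remains conjectural in general, is this last step. The classical Bogomolov--Gieseker inequality controls only $(\ch_0,\ch_1,\ch_2)$, while $Q_{a,b}$ couples these to $\ch_3$; producing a uniform $\ch_3$-bound for tilt-semistable objects on an \emph{arbitrary} 3-fold appears to require genuinely new geometric input and, as is well known, the unmodified inequality even fails on certain blow-ups. This is exactly why the paper later invokes Conjecture \ref{SBG_conj} only in a restricted region for the quintic 3-fold and otherwise relies on the already-proved Fano / abelian cases listed in Theorem \ref{SBG}.
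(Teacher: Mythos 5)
This statement is labeled as a \emph{conjecture} in the paper (imported verbatim from Bayer--Macr\`i--Toda) and the paper offers no proof of it; it only records, in Theorem \ref{SBG}, the cases in which it is known, by citing external work. You correctly recognize the conjectural status rather than manufacturing a fake argument, and your sketch of the known reduction strategy --- locally finite wall-and-chamber structure plus continuity of $Q_{a,b}$ reducing to objects on walls, the limit $a\to 0^+$ degenerating to slope-semistable sheaves or torsion sheaves on divisors, and the residual obstruction being a uniform bound on $\ch_3$ that requires case-specific geometric input (and which genuinely fails for the unmodified inequality on certain blow-ups) --- is an accurate account of the state of the art and of why the paper only invokes the conjecture in a restricted region for quintic 3-folds. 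There is nothing in the paper to compare your argument against, and your honest identification of where a general proof would break down is the right answer here.
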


This conjecture was proved in several cases. We list some results proved in \cite{MP1,MP2,Mac14,sch13,liquintic,li2018stability} which will be mentioned in later sections. We refer to \cite[Section 4]{BM22} for a more complete list of known results.

\begin{theorem} \label{SBG}
The Conjecture \ref{SBG_conj} holds when

\begin{enumerate}[(i)]
\item $X$ is a smooth Fano 3-fold of Picard number one (including three-dimensional hypersurfaces of degree $\leq 4$) and $(a,b)\in \mathbb{R}_{>0}\times \mathbb{R}$,

\item $X$ is a principally polarized abelian 3-fold of Picard rank one and $(a,b)\in \mathbb{R}_{>0}\times \mathbb{R}$, or

\item $X$ is a quintic 3-fold and $a^2>(b-\lfloor b\rfloor)(\lfloor b\rfloor+1-b)$.
    
\end{enumerate}

\end{theorem}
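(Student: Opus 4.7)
The statement is a compilation of three instances of the generalised Bogomolov--Gieseker inequality, each already established in the cited literature; my plan is therefore to verify them one at a time by running through the standard tilt-and-wall-cross strategy that is common to all three, and to track where each geometric setting enters.

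The overall strategy, due to Bayer--Macr\`i--Stellari in its original form, proceeds as follows. For a fixed class $v\in\Lambda$ I would consider the locally finite wall-and-chamber decomposition of the upper half-plane $\{(a,b):a>0\}$ coming from tilt-stability. The key observation is that if a $\sigma_{a,b}$-semistable object $E$ crosses a wall, its Jordan--H\"older factors have strictly smaller discriminant $\Delta_H$ than $E$, so by induction on $\Delta_H$ (which is a non-negative integer up to a bounded denominator) it suffices to verify $Q_{a,b}(E)\ge 0$ at a single point of each chamber. A Hodge-index-type computation shows that $Q_{a,b}$ is a "quadratic wall-form", so its non-negativity propagates across walls once established at one favourable point. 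It remains, for each of (i)--(iii), to pick that favourable point and to use the geometry of $X$ to certify $Q_{a,b}(E)\ge 0$ there.

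For case (i), I would drive $(a,b)$ into the large-$a$ region, where Lemma \ref{bms lemma 2.7} identifies tilt-semistable objects with $2$-Gieseker-semistable sheaves (up to shift), and use that on a smooth Fano $3$-fold of Picard rank one the classical Bogomolov--Gieseker inequality together with an explicit bound on $\ch_3$ for slope-semistable sheaves (Langer's estimates, plus Kodaira vanishing coming from $-K_X$ ample) gives $Q_{a,b}\ge 0$ directly; for degree-$\le 4$ hypersurfaces this is Macr\`i's and Schmidt's original arguments, for general Picard-one Fano $3$-folds it is Li's extension via restriction to hyperplane sections. Case (ii) is proved by the Maciocia--Piyaratne method: the Poincar\'e Fourier--Mukai transform on an abelian $3$-fold sends a shift of $\cA^b$ to a heart of the analogous tilt on the dual abelian $3$-fold and swaps the relevant Chern characters, so the inequality at $(a,b)$ is exchanged for one at a dual point, and iterating this together with the triviality of $K_X$ closes the induction.

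Case (iii) is the delicate one and is where I expect the main obstacle to lie. The generalised BG inequality actually fails on the quintic for $(a,b)$ very close to an integer $b\in\ZZ$ because of sheaves supported on hyperplane sections, so the restriction $a^2>(b-\lfloor b\rfloor)(\lfloor b\rfloor+1-b)$ must be used essentially: it is exactly the region in which every potential numerical wall centred at a half-integer $b$-value lies below the ``forbidden'' semicircle. Following Li's approach, I would combine the generic wall-crossing reduction above with a careful enumeration of the small semicircular walls meeting the allowed region, and verify $Q_{a,b}\ge 0$ on each such wall by reducing, via the JH factors, to the previously known inequality for rank-one and rank-two objects with small discriminant. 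The bookkeeping required to rule out every wall in the allowed region, rather than just those in a strict subregion, is the technical heart of the proof and the part of the argument I would allocate the most care to.
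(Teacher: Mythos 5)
The paper does not actually prove Theorem \ref{SBG}: it is a compilation of known results, and the sentence preceding it simply cites \cite{MP1,MP2,Mac14,sch13,liquintic,li2018stability} (with \cite[Section 4]{BM22} for a fuller list). So there is no in-paper argument to compare yours against; what you have written is a reconstruction of the proofs in those references. Your general skeleton --- propagate $Q_{a,b}\ge 0$ across walls using that Jordan--H\"older factors have strictly smaller $\Delta_H$, reducing to one favourable point per chamber, i.e.\ the reduction of \cite[Theorem 5.4]{bayer2016space} --- and your account of case (ii) via the Maciocia--Piyaratne Fourier--Mukai argument are essentially accurate.

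Two points in your sketch are wrong, and they sit exactly where you would need to be precise. First, in case (iii) you assert that the inequality ``actually fails'' on the quintic outside the region $a^2>(b-\lfloor b\rfloor)(\lfloor b\rfloor+1-b)$. No such counterexample is known; the restriction records where Li's method applies, not where the statement breaks (the known failures of Conjecture \ref{SBG_conj} occur on threefolds of higher Picard rank, e.g.\ the blowup of $\PP^3$ at a point). Your description of the excluded locus is also inaccurate: it is the union of half-discs of radius $\tfrac12$ centred at the half-integers on the $b$-axis, so at integral $b$ the condition is vacuous rather than most restrictive. Second, the proof in \cite{liquintic} is not a wall enumeration: after the reduction to the locus $\mu_{a,b}(E)=0$, Li bounds $\ch_3$ of a tilt-stable object by estimating $\hom\big(\oh_X(-kH),E\big)$ via restriction to $(2,5)$-complete-intersection surfaces and the classical Bogomolov--Gieseker inequality there --- precisely the input isolated in Remark \ref{char_p}. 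Similarly, for case (i) the large-volume limit only gives $Q_{a,b}\ge 0$ trivially when $\Delta_H(E)>0$ and $a\gg 0$; the substance of \cite{li2018stability} is a strengthened Bogomolov-type inequality for tilt-stable objects near $\mu_{a,b}(E)=0$, not a Langer-plus-Kodaira-vanishing estimate in the Gieseker chamber. None of this affects the correctness of the theorem as a citation of established results, but as a self-contained argument your sketch would not close cases (i) and (iii).
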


%When $H^2\cdot \ch^{b}_1(E)> 0$, we will denote the bound of $\ch_3(E)$ obtained from $Q_{a, b}(E)\geq 0$ by $B_{a, b}(E)$. In other words, $Q_{a, b}(E)\geq 0$ if and only if $\ch_3(E)\leq B_{a, b}(E)$.

Note that our parameter $a$ is different from $\alpha$ used in \cite{liquintic} (c.f.~\cite[Remark 2.4]{liquintic}).

\begin{remark} \label{rmk_boundary}
If an object $E\in \D^b(X)$ satisfies $Q_{a,b}(E)\geq 0$ for any $(a,b)\in U$, where $U\subset \mathbb{R}_{>0}\times \mathbb{R}$ is a given subset, then by continuity, we have $Q_{a,b}(E)\geq 0$ for any $(a,b)\in \overline{U}$.
\end{remark}

%We define
%\[U_X:=\{(a, b)\in \mathbb{R}_{>0}\times \mathbb{R} : a^2>(b-\lfloor b\rfloor)(\lfloor b\rfloor+1-b)\}. \]
%and 
%\[\overline{U_X}:=\{(a, b)\in \mathbb{R}_{\geq 0}\times \mathbb{R} : a^2\geq (b-\lfloor b\rfloor)(\lfloor b\rfloor+1-b)\}. \]

\subsection{Wall-chamber structure} \label{wall_sec2}

Let $(X, H)$ be a polarised smooth projective 3-fold. In this subsection, we describe the wall-chamber structure of tilt-stability $\sigma_{a, b}$.

\begin{definition}
Let $v\in \KK(X)$. A \emph{numerical wall} for $v$ induced by $w\in \KK(X)$ is the subset
\[W(v, w)=\{(a, b)\in \mathbb{R}_{>0}\times \mathbb{R} \mid \mu_{a, b}(v)=\mu_{a, b}(w)\}\neq \varnothing.\]
%We say a point $(a,b)$ is over (resp.~below) a numerical wall $W$ if $a>a'$ (resp.~$a<a'$) for any $(a',b)\in W$.

Let $E\in \D^b(X)$ be an non-zero object with $[E]=v\in \mathrm{K}(X)$. A numerical wall $W=W(v,w)$ for $v$ is called an \emph{actual wall for $E$} if there is a short exact sequence of $\sigma_{a,b}$-semistable objects
\begin{equation} \label{wall_seq}
    0\to F\to E\to G\to 0
\end{equation}
in $\cA^{b}$ for one $(a,b)\in W$ (hence all $(a,b)\in W$ by \cite[Corollary  4.13]{bayer2020desingularization}) such that $W=W(E, F)$ and $[F]=w$.

We say such an exact sequence  \eqref{wall_seq} of $\sigma_{a,b}$-semistable objects  \emph{induces the actual wall $W$} for $E$.

%We say an object $E$ is \emph{destabilized} by an actual wall $W$ if $E$ is $\sigma_{a,b}$-semistable for $(a,b)$ over (resp.~below) $W$, and becomes not $\sigma_{a,b}$-semistable for $(a,b)$ below (resp.~over) the wall $W$.

A \emph{chamber} for $E$ is defined to be a connected component of the complement of the union of actual walls for $E$.

We say a point $(a,b)\in \mathbb{R}_{>0}\times \mathbb{R}$ is \emph{over} a numerical wall $W$ if $a>a'$ for any $(a',b)\in W$.
\end{definition}

%One can see that if $\mathcal{C}$ is a chamber for an object $E$, then $E$ is $\sigma_{a,b}$-semistable for some $(a,b)\in \cC$ if and only if it is for all $(a,b)\in \cC$. In other words, the $\sigma_{a,b}$-stability of $E$ will not change when $(a,b)$ varies in a chamber.

%For two objects $E, F\in \cA^b$, we say $F$ \emph{destabilizes} $E$ if there is a morphism $i:F\hookrightarrow E$ (resp.~$i:E\twoheadrightarrow F$) and $W(F, \cok(i))$ (resp.~$W(F,\ker(i))$) is a wall for $E$.

\begin{proposition} [{\cite[Section 4]{bayer2020desingularization}}] \label{wall_prop}
Let $v\in \KK(X)$ and $\Delta_H(v)\geq 0$. Let $E\in \D^b(X)$ be an object with $[E]=v$.

\begin{enumerate}
    \item If $\mathcal{C}$ is a chamber for an object $E$, then $E$ is $\sigma_{a,b}$-semistable for some $(a,b)\in \cC$ if and only if it is for all $(a,b)\in \cC$.
    
    \item A numerical wall for $v$ is either a semicircle centered along the $b$-axis or a vertical wall parallel to the $a$-axis. The set of numerical walls for $v$ is locally finite. No two walls intersect.
    
    \item If $\ch_0(v)\neq 0$, then there is a unique numerical vertical wall $b=\mu_H(v)$. The remaining numerical walls are split into two sets of nested semicircles whose apex lies on the hyperbola $\mu_{a, b}(v)=0$.
    
    \item If $\ch_0(v)=0$ and $H^2\cdot \ch_1(v)\neq 0$, then every numerical wall is a semicircle whose apex lies on the ray $b=\frac{H\cdot \ch_2(v)}{H^2\cdot \ch_1(v)}$.
    
    \item If $\ch_0(v)=0$ and $H^2\cdot \ch_1(v)= 0$, then there are no actual walls for $v$.
    
    \item If an actual wall for $E$ is induced by a short exact sequence of $\sigma_{a,b}$-semistable objects
    \[0\to F\to E\to G\to 0\]
    then $\Delta_H(F)+\Delta_H(G)\leq \Delta_H(E)$ and equality can only occur if either $F$ or $G$ is a sheaf supported in dimension zero. 
    In particular, if $\mu_{a,b}(E)<+\infty$ then we have $\Delta_H(F)+\Delta_H(G)< \Delta_H(E)$
\end{enumerate}

\end{proposition}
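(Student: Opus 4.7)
The plan is to split the six items into three groups. Items (2)--(5) are routine computations with the slope formula; (1) is a standard wall-crossing argument based on the support property; (6) is the conceptual heart, proved via a bilinear form argument. For (2)--(5), I would clear denominators in the equality $\mu_{a,b}(v) = \mu_{a,b}(w)$, obtaining
\[\Re Z_{a,b}(v)\cdot \Im Z_{a,b}(w) = \Re Z_{a,b}(w)\cdot \Im Z_{a,b}(v),\]
and expand using $\Im Z_{a,b}(v) = H^{n-1}\ch_1(v)-bH^n\ch_0(v)$ together with $\Re Z_{a,b}(v) = \tfrac{a^2-b^2}{2}H^n\ch_0(v)+bH^{n-1}\ch_1(v)-H^{n-2}\ch_2(v)$. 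A direct check shows that the $a^2$ and $b^2$ coefficients coincide, reducing the wall equation to the form $\alpha(a^2+b^2)+\beta b+\gamma = 0$: a circle centered on the $b$-axis when $\alpha\ne 0$, or a vertical line $b = -\gamma/\beta$ when $\alpha = 0$. This settles the shape statement of (2); local finiteness and non-intersection follow from the support property, since an intersection point would force $Z_{a,b}$ to have a two-dimensional kernel in the sublattice spanned by the three relevant classes, contradicting negative-definiteness of $\Delta_H$ there. Parts (3)--(5) are case analyses: solve $\partial a/\partial b = 0$ for the apex, substitute the hypotheses on $\ch_0(v)$ and $H^2\ch_1(v)$, and observe that in (5) both $\alpha$ and $\beta$ vanish so no actual wall can form.

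For (1), I would invoke the standard wall-crossing dichotomy: within the chamber $\cC$ the locus where $E$ is $\sigma_{a,b}$-semistable is open by the support property (see \cite[Corollary 4.13]{bayer2020desingularization}), and its complement is also open because any destabilising subobject $F\sst E$ with $\mu_{a,b}(F)>\mu_{a,b}(E)$ deforms across a small neighbourhood inside $\cC$, there being no actual wall in the interior. Connectedness of $\cC$ then forces the semistable locus to be all of $\cC$ or empty.

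The main obstacle is (6). Equality of slopes on the wall forces $Z_{a,b}([F])$ and $Z_{a,b}([G])$ to be non-negative real multiples of each other, so there exists $\lambda\geq 0$ with $k := \lambda[F] - [G] \in \ker Z_{a,b}$. Letting $B_H(\cdot,\cdot)$ denote the polarisation of $\Delta_H$, the support property yields $\Delta_H(k)\leq 0$, that is
\[\lambda^2 \Delta_H(F) - 2\lambda B_H([F],[G]) + \Delta_H(G) \leq 0;\]
since $\Delta_H(F),\Delta_H(G)\geq 0$ by semistability plus the support property, AM--GM gives $B_H([F],[G])\geq \sqrt{\Delta_H(F)\Delta_H(G)}\geq 0$. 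Combined with $\Delta_H(E) - \Delta_H(F) - \Delta_H(G) = 2B_H([F],[G])$, this establishes the inequality. Equality forces $B_H([F],[G]) = 0$ and hence $\Delta_H(F)\Delta_H(G) = 0$; tracing back through the support property identifies the corresponding summand as having $\ch_0 = H^{n-1}\ch_1 = 0$, that is, supported in dimension zero. The final clause follows because a zero-dimensional sheaf has $\Im Z_{a,b} = 0$, hence infinite tilt-slope, incompatible with $\mu_{a,b}(E)<+\infty$.
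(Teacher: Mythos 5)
The paper does not actually prove this proposition: it is quoted verbatim from \cite[Section 4]{bayer2020desingularization}, so there is no internal argument to compare against, and your reconstruction of parts (1)--(4) follows the standard arguments from that reference and is essentially sound. One small slip in (5): with $\ch_0(v)=H^2\ch_1(v)=0$ the wall equation reduces to $H\ch_2(v)\cdot\bigl(H^n\ch_0(w)\,b-H^{n-1}\ch_1(w)\bigr)=0$, so the coefficient you call $\beta$ does \emph{not} vanish in general and numerical walls (vertical lines) do exist; the correct reason there are no \emph{actual} walls is that $\Im Z_{a,b}(v)\equiv 0$ forces $\mu_{a,b}(v)\equiv+\infty$, so $E$ is automatically $\sigma_{a,b}$-semistable whenever it lies in the heart and its semistability never changes.

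The genuine gap is in the equality case of (6). From $B_H([F],[G])=0$ you deduce $\Delta_H(F)\Delta_H(G)=0$ and then assert that the summand with vanishing discriminant is supported in dimension zero. That inference fails: $\Delta_H$ vanishes on every $\oh_X(kH)$, and the support property controls only classes in $\ker Z_{a,b}$, which $[F]$ and $[G]$ themselves need not lie in. What equality actually yields (for $\lambda>0$) is $\lambda^2\Delta_H(F)+\Delta_H(G)\leq 2\lambda B_H([F],[G])=0$, hence $\Delta_H(F)=\Delta_H(G)=0$ and $\Delta_H(\lambda[F]-[G])=0$; negative definiteness on $\ker Z_{a,b}$ then gives $\lambda\ch_{H,\leq 2}(F)=\ch_{H,\leq 2}(G)$, i.e.\ \emph{proportionality} of all three truncated characters, not the vanishing of one of them. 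Without further input the conclusion is simply false: the sequence $0\to\oh_X\to\oh_X^{\oplus 2}\to\oh_X\to 0$ realises equality with neither term zero-dimensional. The missing ingredient is the hypothesis that the sequence induces an \emph{actual wall}: then $W(E,F)$ is a proper numerical wall, so $\ch_{H,\leq 2}(F)$ cannot be proportional to $\ch_{H,\leq 2}(E)$, which rules out the case $\lambda>0$ and leaves only the degenerate cases $\ch_{H,\leq 2}(F)=0$ or $\ch_{H,\leq 2}(G)=0$, whence the corresponding object is a sheaf supported in dimension zero. Your argument never invokes the actual-wall hypothesis, and it cannot close without it.
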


By Proposition \ref{wall_prop}, a semicircle actual wall gives two adjacent chambers. Walls and chambers can be visualized as in \cite[Figure 1]{bayer2020desingularization}.

An easy consequence of (f) in Proposition \ref{wall_prop} is the following.

\begin{lemma} \label{delta_0}
Let $E$ be a $\sigma_{a_0,b_0}$-semistable object for some $(a_0,b_0)\in \mathbb{R}_{>0}\times \mathbb{R}$. Assume that $\Delta_H(E)=0$ and $\mu_{a_0,b_0}(E)<+\infty$, then $E$ is $\sigma_{a,b_0}$-semistable for any $a>0$.
\end{lemma}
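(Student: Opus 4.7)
The plan is to use the wall-crossing structure of tilt-stability: I will show that no actual wall for $E$ meets the vertical ray $\{(a, b_0) : a > 0\}$, and then Proposition \ref{wall_prop}(a) will immediately give that $E$ remains $\sigma_{a, b_0}$-semistable for every $a > 0$.

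To rule out the existence of such an actual wall, I would argue by contradiction. Suppose some $(a_1, b_0)$ with $a_1 > 0$ lies on an actual wall for $E$, induced by a short exact sequence
\[0 \to F \to E \to G \to 0\]
of $\sigma_{a_1, b_0}$-semistable objects in $\cA^{b_0}$. The first observation to record is that $\Im Z_{a, b_0}(E) = H^{n-1}\ch_1^{b_0 H}(E)$ is independent of $a$, so the hypothesis $\mu_{a_0, b_0}(E) < +\infty$ transfers directly to $\mu_{a_1, b_0}(E) < +\infty$. Hence the strict-inequality clause of Proposition \ref{wall_prop}(f) applies and yields
\[\Delta_H(F) + \Delta_H(G) < \Delta_H(E) = 0.\]
On the other hand, the support property built into tilt-stability (Proposition \ref{tilt_stab}) forces $\Delta_H(F) \geq 0$ and $\Delta_H(G) \geq 0$, giving the desired contradiction.

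With no actual wall for $E$ crossing $\{(a,b_0) : a > 0\}$, this entire ray lies within a single chamber for $E$. Since $E$ is $\sigma_{a_0, b_0}$-semistable at the point $(a_0, b_0)$ in this chamber, Proposition \ref{wall_prop}(a) concludes that $E$ is $\sigma_{a, b_0}$-semistable for every $a > 0$. I do not expect any genuine obstacle here; the lemma is essentially a one-line consequence of pairing the strict form of Proposition \ref{wall_prop}(f) against the nonnegativity of $\Delta_H$ on tilt-semistable objects. The only subtlety to flag is verifying that $\mu_{a_1, b_0}(E) < +\infty$, needed to invoke the strict rather than the weak inequality in (f), and this is precisely where the $a$-independence of $\Im Z_{a, b_0}$ enters.
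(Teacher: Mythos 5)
Your proof is correct and is exactly the argument the paper intends: the lemma is stated as "an easy consequence of (f) in Proposition \ref{wall_prop}", namely that any actual wall crossing the ray $\{(a,b_0):a>0\}$ would force $\Delta_H(F)+\Delta_H(G)<\Delta_H(E)=0$, contradicting the support property $\Delta_H\geq 0$ for tilt-semistable objects. Your extra remark that $\Im Z_{a,b_0}(E)$ is independent of $a$, so the strict inequality in (f) applies, is the right point to flag and is handled correctly.
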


\section{Walls} \label{wall_sec}
In this section, we are going to analyze the wall-chamber structure for ideal sheaves of curves.
Let $X$ be a smooth projective 3-fold such that $\Pic(X)$ is generated by an ample line bundle $\oh_X(1)$. A \emph{hypersurface section} of $X$ of degree $k\geq 1$ is an effective divisor in the linear series $|\oh_X(k)|$. 

Let $H:=c_1(\oh_X(1))$ and $n:=H^3$ be the degree of $X$. Then we have
\[\ch_{H,\leq 1}(-) \in \ZZ\times \ZZ.\]
We denote $\ch^{bH}_i(-)$ by $\ch^b_i(-)$. For any closed subscheme $Z\subset X$, its ideal sheaf in $X$ is denoted by $I_Z$. For any curve $C\subset X$ of degree $d$, we have $\ch_{H,\leq 2}(I_C)=(1,0,-\frac{d}{n})$. By Lemma \ref{bms lemma 2.7}, $I_C\in \cA^b$ is $\sigma_{a,b}$-stable for any $b<0$ and $a\gg 0$.

\begin{proposition} \label{wall}
Let $C\subset X$ be a curve with degree $d$.

\begin{enumerate}[(i)]
    \item There is no actual wall on $b=-1$ for $I_C$.
    
    \item Assume that for some $b<0$, there is an actual wall for $I_C$ induced by an exact sequence in $\cA^{b}$
\[0\to A\to I_C\to B\to 0.\]
Then $A$ is a torsion-free sheaf. 

\item Moreover, if $\rk(A)=1$, then $b<-1$ and $B$ is a torsion sheaf. In this case,  $A=\oh_X(-D)\cong \oh_X(-kH)$ or $A=I_{C_1}(-D)\cong I_{C_1}(-kH)$ for a hypersurface section $D\in |\oh_X(k)|$ of $X$, where $\lceil b \rceil \leq -k\leq -1$ and $C_1\subset C$ is a curve with degree
\[1\leq d_1:=d(C_1)< \min\{d-\frac{k^2 n}{2}, d+\frac{k^2 n}{2}-k\sqrt{2nd}\}.\]
%and $D\in |\oh_X(1)|$ is a hyperplane section of $X$. %In this case, some components of $C$ is contained in $D$.
\end{enumerate}

\end{proposition}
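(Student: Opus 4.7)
For part (i), I plan to argue that any hypothetical destabilizing sequence $0 \to A \to I_C \to B \to 0$ in $\cA^{-1}$ realizing a wall at $b = -1$ cannot exist. Since $\Pic(X) = \ZZ \cdot H$, the ranks $\ch_{H,0}$ and first Chern characters $\ch_{H,1}$ of $A$ and $B$ are integers, so the twisted invariants $\ch^{-1}_{H,1}(A) = \ch_{H,1}(A) + \rk(A)$ and $\ch^{-1}_{H,1}(B)$ are nonnegative integers summing to $\ch^{-1}_{H,1}(I_C) = 1$. Hence one of them vanishes, so $\Im Z_{a,-1}$ is zero on that side, giving an infinite tilt slope opposite a finite one, which is incompatible with the wall equation $\mu_{a,-1}(A) = \mu_{a,-1}(B) = \mu_{a,-1}(I_C) < +\infty$.

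For part (ii), the plan is to apply the long exact sequence of cohomology sheaves to $0 \to A \to I_C \to B \to 0$ in $\cA^b$. Since $\cH^{-1}(I_C) = 0$, exactness forces $\cH^{-1}(A) = 0$, so $A$ is a sheaf concentrated in degree zero, and the resulting four-term sequence $0 \to \cH^{-1}(B) \to A \to I_C \to \cH^0(B) \to 0$ presents $A$ as an extension $0 \to \cH^{-1}(B) \to A \to A/\cH^{-1}(B) \to 0$. Here $\cH^{-1}(B) \in \cF^b$ is torsion-free and $A/\cH^{-1}(B) \hookrightarrow I_C$ is torsion-free as a subsheaf of a torsion-free sheaf, so $A$ is torsion-free.

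For part (iii), assume $\rk(A) = 1$. The same cohomology sequence shows $B$ is a torsion sheaf: $\cH^{-1}(B) \hookrightarrow A$ has rank at most $1$, and rank $1$ would force $A/\cH^{-1}(B) = 0$, contradicting that $A \hookrightarrow I_C$ is nonzero. Next, $A^{\vee\vee}$ is a reflexive rank-$1$ sheaf on the smooth variety $X$, hence a line bundle, and the injection $A^{\vee\vee} \hookrightarrow I_C^{\vee\vee} = \oh_X$ identifies it with $\oh_X(-kH)$ for some integer $k \geq 0$, via a section $s \in H^0(\oh_X(kH))$ cutting out a divisor $D \in |\oh_X(k)|$. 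Writing $A = I_W \otimes \oh_X(-kH)$ for a subscheme $W \subseteq X$ and noting $\ch_{\leq 2}$ depends only on the curve part of $W$, we may take $A = \oh_X(-kH)$ (when $C \subseteq D$) or $A = I_{C_1}(-kH)$ with $C_1$ chosen as the scheme-theoretic residual of $C \cap D$ in $C$ (so $C_1 \subseteq C$ is a nonempty curve); the inclusion $s \cdot I_{C_1} \subseteq I_C$ is then equivalent to $C \subseteq D \cup C_1$.

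For the numerical constraints: $A \in \cA^b$ forces $b < \mu_H(A) = -k$; the case $k = 0$ would give $\ch_{\leq 2}(A) = \ch_{\leq 2}(I_C)$, making $B$ zero-dimensional with $\mu_{a,b}(B) = +\infty \neq \mu_{a,b}(A)$, which precludes a wall; hence $k \geq 1$ and $b < -k \leq -1$, in particular $\lceil b \rceil \leq -k$. The bound $d_1 < d - \frac{nk^2}{2}$ follows from Proposition \ref{wall_prop}(f) applied with $\Delta_H(A) = \frac{2d_1}{n}$, $\Delta_H(B) = k^2$, $\Delta_H(I_C) = \frac{2d}{n}$ (strict since $\mu_{a,b}(I_C) < +\infty$). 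Rewriting the wall equation $\mu_{a,b}(A) = \mu_{a,b}(I_C)$ as a semicircle $a^2 + (b - b_0)^2 = \left(\frac{k}{2} + \frac{d - d_1}{nk}\right)^2 - \frac{2d}{n}$, positivity of the squared radius on the correct sign branch yields $d_1 < d + \frac{nk^2}{2} - k\sqrt{2nd}$. The main obstacle is the structural identification of $A$ in part (iii): carefully choosing $C_1$ as a subcurve of $C$ rather than an a priori arbitrary curve, and verifying that zero-dimensional embedded components of $W$ can be absorbed without altering the wall-theoretic invariants.
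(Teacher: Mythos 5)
Parts (i) and (ii) and the numerical estimates in (iii) are fine and essentially follow the paper's route (for (i) the paper simply cites \cite[Lemma 2.7]{MS20}, but your integrality argument for $\ch^{-1}_{H,1}$ is exactly the content of that lemma; the discriminant and semicircle computations match the paper's, up to your normalization of $\Delta_H$ by $n^2$).

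The genuine gap is exactly at the point you flag as ``the main obstacle,'' and your proposed resolution does not close it. Writing $A=I_W(-kH)$, the proposition is a statement about the \emph{given} destabilizing subobject, so you are not free to ``take'' $A$ to be $\oh_X(-kH)$ or $I_{C_1}(-kH)$ with $C_1$ the residual scheme of $C\cap D$ in $C$: replacing $A$ by another sheaf with the same $\ch_{\leq 2}$ produces the same numerical wall but not the same actual exact sequence, and the downstream argument (Corollary \ref{cor_wall}, where the decomposition $C=C_1\cup C_2$ and the genus formula \eqref{key_equation} are extracted from this specific $A$) needs the actual $A$ to have this form. Moreover, the inclusion $s\cdot I_W\subseteq I_C$ only says $I_W\subseteq (I_C:s)$, i.e.\ $W$ \emph{contains} the residual scheme; a priori $W$ could have extra components not contained in $C$ (so $d_1\leq d$ is not automatic, and you need $d_1\le d$ to select the correct branch for the second bound), or fail to be Cohen--Macaulay. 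The missing idea is the paper's: compare the sequence $0\to I_W(-kH)\to\oh_X(-kH)\to\oh_W(-kH)\to 0$ with $0\to I_C\to\oh_X\to\oh_C\to 0$ via the vertical maps, and apply the snake lemma to get an injection $\ker\bigl(\oh_W(-kH)\to\oh_C\bigr)\hookrightarrow B$ in $\Coh(X)$. This kernel lies in $\cA^b$ and has $\mu_{a,b}=+\infty$ (it is supported in dimension $\leq 1$ with $\ch_0=\ch_1=0$), so the $\sigma_{a,b}$-semistability of $B$ (which has finite slope on the wall) forces it to vanish. Hence $\oh_W(-kH)$ injects into the pure sheaf $\oh_C$, which simultaneously gives $W\subseteq C$, the purity of $\oh_W$ (so $C_1:=W$ is a curve), and $d_1\leq d$. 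Without this use of the semistability of the quotient $B$, the structural claim in (iii) is not established.
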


\begin{proof} 
The statement (i) follows from \cite[Lemma 2.7]{MS20}.
    
For (ii), we take the cohomology with respect to the heart $\Coh(X)$. By the definition of $\cA^b$, we have $\cH^0(A)\cong A$, $\cH^{i}(B)=0$ if $i\notin \{-1, 0\}$ and a long exact sequence in $\Coh(X)$
\[0\to \cH^{-1}(B)\to A\xra{\pi_1} I_C\to \cH^0(B)\to 0.\]
Since $\cH^{-1}(B)$ and $I_C$ are torsion-free, we know that $A$ is also torsion-free. 

Now we prove (iii). We assume that $\rk(A)=1$. Since the map $\pi_1\colon A\to I_C$ is non-trivial, it is injective in $\Coh(X)$. Thus $\cH^{-1}(B)=0$ and $B\cong \cH^0(B)\in \Coh(X)$ is a torsion sheaf.

By the definition of $\sigma_{a,b}$, the imaginary parts of central charges satisfy 
\[\Im\big(Z_{a,b}(A)\big)=H^2\ch^b_1(A)\geq 0, \quad \Im\big(Z_{a,b}(B)\big)=H^2\ch^b_1(B)\geq 0.\]
Thus $0\leq H^2\ch^b_1(A)\leq H^2\ch^b_1(I_C)$. But from $b\neq 0$, we obtain $\mu_{a,b}(A)=\mu_{a,b}(B)=\mu_{a,b}(I_C)<+\infty$ in this case. Therefore, we get
\[0< H^2\ch^b_1(A)< H^2\ch^b_1(I_C),\]
which implies
\begin{equation} \label{ch1A}
    bH^3<H^2\ch_1(A)<0
\end{equation}
Since $X$ has Picard rank one, we see $\ch_1(A)=-kH$ for an integer $k$. Then \eqref{ch1A} implies $b<-1$ and $\lceil b \rceil \leq -k\leq -1$.

If $A$ is reflexive, from the assumption that $A$ has rank one and  \cite[Proposition 1.9]{har80}, we see $A$ is a line bundle. Hence $A\cong \oh_X(-kH)$. If $A$ is not reflexive, then $A\cong I_{C_1}(-kH)$, where $C_1$ is a closed subscheme of $X$ with dimension $\leq 1$. Now by taking the double dual, we have a commutative diagram with exact rows:% https://q.uiver.app/?q=WzAsMTAsWzAsMCwiMCJdLFsxLDAsIklfWigtSCkiXSxbMiwwLCJcXG9oX1goLUgpIl0sWzMsMCwiXFxvaF9aKC1IKSJdLFs0LDAsIjAiXSxbMCwxLCIwIl0sWzEsMSwiSV9DIl0sWzIsMSwiXFxvaF9YIl0sWzMsMSwiXFxvaF9DIl0sWzQsMSwiMCJdLFswLDFdLFsxLDJdLFsyLDNdLFszLDRdLFs4LDldLFs2LDddLFs3LDhdLFs1LDZdLFsxLDYsIlxccGlfMSJdLFsyLDddLFszLDgsIlxccGlfMiJdXQ==
\[\begin{tikzcd}
	0 & {I_{C_1}(-kH)} & {\oh_X(-kH)} & {\oh_{C_1}(-kH)} & 0 \\
	0 & {I_C} & {\oh_X} & {\oh_C} & 0.
	\arrow[from=1-1, to=1-2]
	\arrow[from=1-2, to=1-3]
	\arrow[from=1-3, to=1-4]
	\arrow[from=1-4, to=1-5]
	\arrow[from=2-4, to=2-5]
	\arrow[from=2-2, to=2-3]
	\arrow[from=2-3, to=2-4]
	\arrow[from=2-1, to=2-2]
	\arrow["{\pi_1}", from=1-2, to=2-2]
	\arrow[from=1-3, to=2-3]
	\arrow["{\pi_2}", from=1-4, to=2-4]
\end{tikzcd}\]
Using the snake lemma, we obtain an injection $\ker(\pi_2)\hookrightarrow \cok(\pi_1)=B$ in $\Coh(X)$. Since  $\ker(\pi_2)\in \cA^{b}$ and $\mu_{a, b}(\ker(\pi_2))=+\infty$, by the tilt-stability of $B$ we get $\ker(\pi_2)=0$. This implies that $\oh_{C_1}(-kH)$ is a subsheaf of  $\oh_C$. Hence $\oh_{C_1}$ is a pure sheaf and $C_1\subset C$ is a curve since $C$ is. Note that up to now, we see $d_1=d(C_1)\leq d$.

We have $\ch_{H, \leq 2}(I_{C_1})=\big(1,0,-\frac{d_1}{n}\big)$. Then 
\[\ch_{H, \leq 2}(A)=\big(1,-k,\frac{k^2}{2}-\frac{d_1}{n}\big),\quad \ch_{H, \leq 2}(B)=\big(0,k,\frac{d_1-d}{n}-\frac{k^2}{2}\big).\]

From the definition of walls, we obtain $\mu_{a,b}(A)=\mu_{a,b}(I_C)$ for some $(a,b)\in \mathbb{R}_{>0}\times \mathbb{R}$. Thus we see
\[\frac{\frac{k^2}{2}-\frac{d_1}{n}+kb+\frac{1}{2}(b^2-a^2)}{-k-b}=\frac{-\frac{d}{n}+\frac{1}{2}(b^2-a^2)}{-b},\]
which gives
\begin{equation} \label{circle}
    a^2+\big( b+(\frac{d-d_1}{kn}+\frac{k}{2}) \big)^2=\big(\frac{d-d_1}{kn}+\frac{k}{2}\big)^2-\frac{2d}{n}.
\end{equation}
Since \eqref{circle} has solutions $(a,b)$ in $\mathbb{R}_{>0}\times \mathbb{R}$, we see  $\big(\frac{d-d_1}{kn}+\frac{k}{2}\big)^2-\frac{2d}{n}>0$. Then by $d_1\leq d$, we conclude
\[d_1<d+\frac{k^2n}{2}-k\sqrt{2nd}.\]

Finally, from (f) in Proposition \ref{wall_prop} and since $A, B$ are not supported on points, we have $\Delta_H(A)+\Delta_H(B)< \Delta_H(I_C)$. Then we get
\[d_1< d-\frac{k^2n}{2}\]
and the result follows.

\end{proof}

Let $d$ be a positive integer, we define \[\rho_d:=\sqrt{\frac{d}{4n}},\quad b_d:=\rho_d-\sqrt{\rho_d^2+\frac{2d}{n}}=-\sqrt{\frac{d}{n}}.\] 
%Note that %$b_d>-\sqrt{\frac{2d}{5}}$. 

\begin{lemma} \label{rank_1}
Let $E\in \cA^{b}$ be a $\sigma_{a,b}$-semistable object for some $a>0$ with $\ch_{H,\leq 2}(E)=(1,0,-\frac{d}{n})$ and $d\in \mathbb{Z}_{>0}$. Assume that $b_d\leq b<0$ and there is an actual wall for $E$ induced by a subobject $F\hookrightarrow E$ or quotient $E\twoheadrightarrow F$ in $\cA^b$ with $\ch_0(F)\geq 1$. Then $\ch_0(F)=1$.
\end{lemma}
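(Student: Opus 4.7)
The plan is to assume $r := \ch_0(F) \geq 2$ and derive a contradiction. Write $\ch_1(F) = mH$ and $s := H\ch_2(F)$, so $\ch_0(G) = 1-r$, $\ch_1(G) = -mH$ and $H\ch_2(G) = -d-s$, where $G := E/F$ in the subobject case. The quotient case is handled identically by replacing $F$ with the kernel of $E \twoheadrightarrow F$. The two key ingredients are the geometry of the semicircular wall and the Bogomolov--Gieseker inequalities $\Delta_H(F), \Delta_H(G) \geq 0$.

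\emph{Wall geometry.} Since $b < 0$ the wall is non-vertical, so the imaginary parts of the central charges satisfy $0 < H^2 \ch_1^b(F) < H^2 \ch_1^b(E)$, i.e.~$(r-1)|b| < |m| < r|b|$; in particular $m < 0$. Solving the wall equation $\mu_{a,b}(F) = \mu_{a,b}(E)$ for $a^2$ realises the wall as a semicircle in the $(a,b)$-plane centred at $b_0 = (s+dr)/(mn)$ of radius $\sqrt{b_0^2 - 2d/n}$. Requiring $a > 0$ at the specified $(a,b)$ forces $b_0 < 0$ (so $s+dr > 0$) and $|b_0| > |b|/2 + d/(n|b|)$, which rewrites as $s + dr > |m|\bigl(n|b|/2 + d/|b|\bigr)$.

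\emph{Symmetric bounds on $|m|$.} The inequality $\Delta_H(F) \geq 0$ reads $s \leq m^2 n/(2r)$; substituting into the above lower bound on $s+dr$ and clearing denominators gives $(r|b| - |m|)\bigl(n|b||m| - 2dr\bigr) < 0$, and since $|m| < r|b|$ the first factor is positive, yielding $|m| < 2dr/(n|b|)$. For the matching lower bound one splits on the sign of $d+s$. If $d+s > 0$, then $\Delta_H(G) \geq 0$ becomes $d+s \leq m^2 n/(2(r-1))$ (here using $r \geq 2$), and the analogous manipulation produces $|m| > 2d(r-1)/(n|b|)$. If instead $d + s \leq 0$, then $s + dr \leq d(r-1)$; combined with $s+dr > |m|\bigl(n|b|/2 + d/|b|\bigr)$ and $|m| > (r-1)|b|$, this forces $(r-1)\bigl[n|b|^2/2 + d\bigr] < d(r-1)$, i.e.~$n|b|^2 < 0$, a direct contradiction.

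Combining $2d(r-1)/(n|b|) < |m| < r|b|$ yields $|b|^2 > 2d(r-1)/(rn)$. For $r \geq 2$ we have $2(r-1)/r \geq 1$, so $|b|^2 > d/n$, contradicting the hypothesis $b \geq b_d = -\sqrt{d/n}$. Hence $r = 1$. The delicate point is the case analysis in the $\Delta_H(G)$ step: because $\ch_0(G) = 1-r < 0$ for $r \geq 2$, the direction of the Bogomolov--Gieseker inequality flips according to the sign of $d+s$, and the marginal case $d+s \leq 0$ must be eliminated by the side calculation above. Once both symmetric bounds on $|m|$ are in hand, the contradiction is immediate, and the threshold $b_d = -\sqrt{d/n}$ turns out to be exactly tuned so that rank-$2$ walls are just barely excluded.
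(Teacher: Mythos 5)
Your proof is correct. All the individual computations check out: the constraint $(r-1)|b|<|m|<r|b|$ from the imaginary parts, the wall centre $b_0=(s+dr)/(mn)$ with radius $\sqrt{b_0^2-2d/n}$, the condition $|b_0|>|b|/2+d/(n|b|)$ from $a^2>0$ at the given point, the factorizations coming from $\Delta_H(F)\geq 0$ and $\Delta_H(G)\geq 0$, the elimination of the marginal case $d+s\leq 0$, and the final clash between $|b|^2>2d(r-1)/(rn)\geq d/n$ and $|b|\leq\sqrt{d/n}$. The sub/quotient symmetry also works, since every ingredient (the numerical wall, the positivity of both imaginary parts, and the discriminant inequalities for both factors) is insensitive to which piece is the subobject — though your phrase ``replacing $F$ with the kernel'' is slightly off, as the kernel has rank $1-r\leq -1$; what you really do is rerun the computation with the rank-$r$ piece as the quotient.

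The route is genuinely different from the paper's. The paper observes that the wall meeting the region $\{a>0,\ b_d\leq b<0\}$ forces its radius $\rho_0$ to exceed $\rho_d=\sqrt{d/(4n)}$ (using that $\rho\mapsto\rho-\sqrt{\rho^2+2d/n}$ is increasing), and then invokes the general rank bound of Bayer--Macr\`i--Stellari \cite[Proposition 4.16]{bayer2020desingularization}, which says a wall of radius $\rho_0$ with $\rho_0^2>\Delta_H(E)/\bigl(4(H^3\ch_0(F))(H^3\ch_0(F)-H^3\ch_0(E))\bigr)$ cannot be induced by a destabilizer of rank $\ch_0(F)$; plugging in $\Delta_H(E)=2nd$ kills every $\ch_0(F)\geq 2$. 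You instead inline a self-contained proof of exactly the special case of that proposition you need, using only the classical Bogomolov--Gieseker inequality for the two factors together with the wall geometry. Your argument is longer but elementary and citation-free; the paper's is a two-line application of a known result. Two small remarks: your upper bound $|m|<2dr/(n|b|)$ from $\Delta_H(F)\geq 0$ is never used in the final contradiction (only the lower bound from $\Delta_H(G)$ and the ceiling $|m|<r|b|$ enter), so that paragraph could be dropped; and your closing sentence about the BG inequality ``flipping direction'' is a slightly misleading description of what is really just the inequality becoming vacuous when $d+s\leq 0$, which your separate case handles correctly anyway.
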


\begin{proof}
Assume that the actual wall $W(E, F)$ is given by 
\[W(E, F)=(b-b_0)^2+a^2=\rho_0^2.\]
Then by Proposition \ref{wall_prop} (c), we see $b_0=-\sqrt{\rho_0^2+\frac{2d}{n}}$. By assumption, we have \[W(E, F)\cap \{(a,b) \mid 0<a, ~b_d\leq b<0\}\neq \varnothing.\]
Thus one can verify \[\rho_0-\sqrt{\rho_0^2+\frac{2d}{n}}>b_d=\rho_d-\sqrt{\rho_d^2+\frac{2d}{n}},\]
which implies $\rho_0>\rho_d$. If $\ch_0(F)\geq 2$, since
\[\rho^2_0>\rho_d^2=\frac{\Delta_H(E)}{4(2H^3)(2H^3-H^3\ch_0(E))}\geq \frac{\Delta_H(E)}{4(H^3\ch_0(F))(H^3\ch_0(F)-H^3\ch_0(E))},\]
by \cite[Proposition 4.16]{bayer2020desingularization} we get $\ch_0(F)\leq \ch_0(E)=1$, which makes a contradiction. Then the result follows.
\end{proof}

As a corollary, we can control the type of walls when $b$ is not so negative.

\begin{corollary} \label{cor_wall}
Let $C\subset X$ be a curve with degree $d$. Assume that for some $b_d\leq b<0$ there is an actual wall for $I_C$ given by an exact sequence in $\cA^{b}$
\[0\to A\to I_C\to B\to 0.\]
Then $b_d\leq b<-1$ and $A= \oh_X(-D)$ or $A=I_{C_1}(-D)$ for a hypersurface section $D\in |\oh_X(k)|$ of $X$, where $\lceil b \rceil \leq -k\leq -1$ and $C_1\subset C$ is a curve with degree
\[d_1:=d(C_1)< \min\{d-\frac{k^2 n}{2}, d+\frac{k^2 n}{2}-k\sqrt{2nd}\}.\]

Moreover, in the case $A=I_{C_1}(-D)$, there is another one-dimensional closed subscheme $C_2\subset C\cap D$ of degree $d_2$ such that $d_1+d_2=d$ and 
\begin{equation} \label{key_equation}
    g(C)=g(C_1)+g(C_2)+kd_1-1.
\end{equation}
\end{corollary}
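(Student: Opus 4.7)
The first part of the statement (the range of $b$, the structure of $A$, and the bound on $d_1$) is essentially a direct consequence of the material already at hand. My plan is to combine Proposition \ref{wall} (ii) (which tells us $A$ is torsion-free, hence $\ch_0(A) \geq 1$) with Lemma \ref{rank_1} applied to the subobject $A \hookrightarrow I_C$ at a point of the wall in the strip $b_d \leq b < 0$, forcing $\ch_0(A) = 1$. Proposition \ref{wall} (iii) then hands over the strict inequality $b < -1$, the two possible shapes for $A$, the constraint $\lceil b \rceil \leq -k \leq -1$, and the claimed bound on $d_1$; Proposition \ref{wall} (i) rules out $b = -1$, so the range $b_d \leq b < -1$ is confirmed.

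For the ``Moreover'' part I would reuse the commutative diagram already established in the proof of Proposition \ref{wall} (iii), namely
\[
\begin{tikzcd}
0 \ar[r] & I_{C_1}(-D) \ar[r] \ar[d, "\pi_1"] & \oh_X(-D) \ar[r] \ar[d] & \oh_{C_1}(-D) \ar[r] \ar[d, "\pi_2"] & 0 \\
0 \ar[r] & I_C \ar[r] & \oh_X \ar[r] & \oh_C \ar[r] & 0
\end{tikzcd}
\]
in which $\pi_1$ is injective, $\ker \pi_2 = 0$, and the middle column is injective with cokernel $\oh_D$. The snake lemma then produces the exact sequence $0 \to B \to \oh_D \to \cok \pi_2 \to 0$ and simultaneously exhibits $\cok \pi_2$ as a quotient of $\oh_C$ via the right-hand column. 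Because $\cok \pi_2$ is thus a common quotient of $\oh_C$ and $\oh_D$, it is annihilated by $I_C + I_D$ and therefore equals $\oh_{C_2}$ for a unique closed subscheme $C_2 \subset C \cap D$. This upgrades the right column to the short exact sequence $0 \to \oh_{C_1}(-D) \to \oh_C \to \oh_{C_2} \to 0$.

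The remaining numerics are routine bookkeeping. A direct subtraction of truncated Chern characters yields $\ch_{H, \leq 2}(\oh_{C_2}) = (0, 0, (d - d_1)/n)$, so $d(C_2) = H \cdot \ch_2(\oh_{C_2}) = d - d_1 =: d_2$; combined with the already-established inequality $d_1 < d - k^2 n/2$, this gives $d_2 \geq 1$, so $C_2$ is genuinely one-dimensional and $d_1 + d_2 = d$. For the genus formula, I apply $\chi$ to the exact sequence for $\oh_{C_2}$ and use the twist identity $\chi(\oh_{C_1}(-D)) = \chi(\oh_{C_1}) - k d_1$ (Hirzebruch--Riemann--Roch for a one-dimensional sheaf tensored with a line bundle) to compute
\[1 - g(C_2) = \chi(\oh_{C_2}) = \chi(\oh_C) - \chi(\oh_{C_1}) + k d_1 = g(C_1) - g(C) + k d_1,\]
which rearranges to $g(C) = g(C_1) + g(C_2) + k d_1 - 1$. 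The only step requiring any care is the identification of $\cok \pi_2$ as the structure sheaf of a subscheme of $C \cap D$; this is the conceptual payload and reduces at once to the observation that the snake-lemma cokernel inherits compatible surjections from both $\oh_C$ and $\oh_D$.
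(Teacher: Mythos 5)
Your proposal is correct and follows essentially the same route as the paper: Proposition \ref{wall}(ii) plus Lemma \ref{rank_1} to force $\ch_0(A)=1$, then Proposition \ref{wall}(iii) for the first part, and the snake lemma on the same commutative diagram to produce $0\to \oh_{C_1}(-D)\to \oh_C\to \oh_{C_2}\to 0$ and the Euler-characteristic computation for \eqref{key_equation}. Your added checks (identifying $\cok\pi_2$ as $\oh_{C_2}$ via the common quotient of $\oh_C$ and $\oh_D$, and verifying $d_2\geq 1$) are details the paper leaves implicit, but the argument is the same.
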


\begin{proof}
By Proposition \ref{wall}, we know that $A$ is a torsion-free sheaf, thus $\ch_0(A)\geq 1=\ch_0(I_C)$. Then from Lemma \ref{rank_1} and $b_d\leq b$, we see $\ch_0(A)=1$. Therefore, the first part of the statement can be deduced from (iii) of Proposition \ref{wall}.

To prove the second one, we have a commutative diagram as in Proposition \ref{wall} with exact rows:% https://q.uiver.app/?q=WzAsMTAsWzAsMCwiMCJdLFsxLDAsIklfWigtSCkiXSxbMiwwLCJcXG9oX1goLUgpIl0sWzMsMCwiXFxvaF9aKC1IKSJdLFs0LDAsIjAiXSxbMCwxLCIwIl0sWzEsMSwiSV9DIl0sWzIsMSwiXFxvaF9YIl0sWzMsMSwiXFxvaF9DIl0sWzQsMSwiMCJdLFswLDFdLFsxLDJdLFsyLDNdLFszLDRdLFs4LDldLFs2LDddLFs3LDhdLFs1LDZdLFsxLDYsIlxccGlfMSJdLFsyLDddLFszLDgsIlxccGlfMiJdXQ==
\[\begin{tikzcd}
	0 & {I_{C_1}(-D)} & {\oh_X(-D)} & {\oh_{C_1}(-D)} & 0 \\
	0 & {I_C} & {\oh_X} & {\oh_C} & 0.
	\arrow[from=1-1, to=1-2]
	\arrow[from=1-2, to=1-3]
	\arrow[from=1-3, to=1-4]
	\arrow[from=1-4, to=1-5]
	\arrow[from=2-4, to=2-5]
	\arrow[from=2-2, to=2-3]
	\arrow[from=2-3, to=2-4]
	\arrow[from=2-1, to=2-2]
	\arrow["{\pi_1}", from=1-2, to=2-2]
	\arrow[from=1-3, to=2-3]
	\arrow["{\pi_2}", from=1-4, to=2-4]
\end{tikzcd}\]
Moreover, we have seen in Proposition \ref{wall} that all vertical arrows in the diagram are injective in $\mathrm{Coh}(X)$. Thus by the snake lemma, we can fill the above diagram as
% https://q.uiver.app/?q=WzAsMjEsWzAsMSwiMCJdLFsxLDEsIklfWigtRCkiXSxbMiwxLCJcXG9oX1goLUQpIl0sWzMsMSwiXFxvaF9aKC1EKSJdLFs0LDEsIjAiXSxbMCwyLCIwIl0sWzEsMiwiSV9DIl0sWzIsMiwiXFxvaF9YIl0sWzMsMiwiXFxvaF9DIl0sWzQsMiwiMCJdLFswLDMsIjAiXSxbMSwzLCJUIl0sWzIsMywiXFxvaF9EIl0sWzMsMywiXFxvaF97Q18yfSJdLFs0LDMsIjAiXSxbMSwwLCIwIl0sWzIsMCwiMCJdLFszLDAsIjAiXSxbMSw0LCIwIl0sWzIsNCwiMCJdLFszLDQsIjAiXSxbMCwxXSxbMiwzXSxbMyw0XSxbOCw5XSxbNiw3XSxbNyw4XSxbNSw2XSxbMSw2LCJcXHBpXzEiXSxbMiw3XSxbMyw4LCJcXHBpXzIiXSxbMSwyXSxbMTAsMTFdLFsxMSwxMl0sWzEyLDEzXSxbMTMsMTRdLFs2LDExXSxbNywxMl0sWzgsMTNdLFsxMywyMF0sWzEyLDE5XSxbMTEsMThdLFsxNSwxXSxbMTYsMl0sWzE3LDNdXQ==
\[\begin{tikzcd}
	& 0 & 0 & 0 \\
	0 & {I_{C_1}(-D)} & {\oh_X(-D)} & {\oh_{C_1}(-D)} & 0 \\
	0 & {I_C} & {\oh_X} & {\oh_C} & 0 \\
	0 & T & {\oh_D} & {\oh_{C_2}} & 0 \\
	& 0 & 0 & 0
	\arrow[from=2-1, to=2-2]
	\arrow[from=2-3, to=2-4]
	\arrow[from=2-4, to=2-5]
	\arrow[from=3-4, to=3-5]
	\arrow[from=3-2, to=3-3]
	\arrow[from=3-3, to=3-4]
	\arrow[from=3-1, to=3-2]
	\arrow["{\pi_1}", from=2-2, to=3-2]
	\arrow[from=2-3, to=3-3]
	\arrow["{\pi_2}", from=2-4, to=3-4]
	\arrow[from=2-2, to=2-3]
	\arrow[from=4-1, to=4-2]
	\arrow[from=4-2, to=4-3]
	\arrow[from=4-3, to=4-4]
	\arrow[from=4-4, to=4-5]
	\arrow[from=3-2, to=4-2]
	\arrow[from=3-3, to=4-3]
	\arrow[from=3-4, to=4-4]
	\arrow[from=4-4, to=5-4]
	\arrow[from=4-3, to=5-3]
	\arrow[from=4-2, to=5-2]
	\arrow[from=1-2, to=2-2]
	\arrow[from=1-3, to=2-3]
	\arrow[from=1-4, to=2-4]
\end{tikzcd}\]
with all rows and columns being exact. It is clear that $C_2$ defined in the above diagram satisfies $C_2\subset C\cap D$. And from the sequence
\[0\to \oh_{C_1}(-D)\xra{\pi_2} \oh_C\to \oh_{C_2}\to 0,\]
we obtain $d=d_1+d_2$ and   $\chi(\oh_C)=\chi(\oh_{C_1}(-D))+\chi(\oh_{C_2})$. Then the equation \eqref{key_equation}  follows from 
\[\chi(\oh_C)=\chi(\oh_{C_1}(-D))+\chi(\oh_{C_2})=\chi(\oh_{C_1})+\chi(\oh_{C_2})-kd_1.\]
\end{proof}

\section{Bound of the genus} \label{sec_bound}

In this section, we aim to prove Theorem \ref{BMT_thm}, \ref{bound_quintic} and Theorem \ref{20_51}.

\subsection{General cases}
In this subsection, we are going to prove Theorem \ref{BMT_thm}.
We fix $X$ to be a smooth projective 3-fold such that $\Pic(X)$ is generated by an ample line bundle $\oh_X(1)$ as in the previous section. Recall that $H:=c_1(\oh_X(1))$ and $n:=H^3$. We define the index $i$ of $X$ by $K_X=-iH$. By the Hirzebruch-Riemann-Roch Theorem, we obtain \[g(C)=\ch_3(I_C)+1-\frac{i}{2}d\]
for any one-dimensional closed subscheme $C\subset X$ of degree $d$.

First, we introduce a special kind of curves.

\begin{definition} \label{def_good}
Let $C\subset X$ be a curve of degree $d$. For an integer $k\geq 1$, we say $C$ is \emph{k-neutral\footnote{The name neutral is motivated by \emph{neutrino} in physics, a kind of particle that has no electrical charge. They rarely react with other matter and can cross any wall in the world.} (in $X$)} if there is no actual wall for $I_C$ in the range $a>0$ and $\max\{b_d, -k-1\}<b<0$.
\end{definition}

\begin{remark}
%For a curve $C$, being neutral depends on the choice of embeddings into a smooth projective 3-fold $X$. In the following, we will always fix a 3-fold $X$ and an embedding $C\hookrightarrow X$.

By definition, if $C\subset X$ is k-neutral, then $I_{C}$ is $\sigma_{a,b}$-stable for any $a>0$ and  $\max\{b_d, -k-1\}<b<0$.
If $C\subset X$ is a curve with degree $d\leq n$, then $b_d\geq -1$. By Corollary \ref{cor_wall}, such $C$ is k-neutral for any $k\geq 1$.
\end{remark}

To prove Theorem \ref{BMT_thm}, we begin with some lemmas.

\begin{lemma} \label{cm_lemma}
Let $C\subset \mathbb{P}^m$ be a one-dimensional closed subscheme with degree $d$ and genus $g$. If $C$ is not a curve, then there is a closed subscheme $C'\subset C$ such that $C'$ is a curve with $d(C')=d$ and $g(C')>g$.
\end{lemma}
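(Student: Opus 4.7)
The plan is to remove from $C$ the "bad" zero-dimensional part of its structure sheaf. Since $C$ is one-dimensional but not Cohen--Macaulay, the structure sheaf $\oh_C$ is not pure, so it contains a nonzero maximal zero-dimensional subsheaf $T \sse \oh_C$ (the sum of all sections with zero-dimensional support, or equivalently the torsion of $\oh_C$ in dimension $0$). I would define $C' \sst C$ to be the closed subscheme whose ideal sheaf $I_{C'} \sst \oh_{\PP^m}$ corresponds, via $I_{C'}/I_C \cong T$, to the quotient $\oh_{C'} := \oh_C/T$. By construction $C'$ is a closed subscheme of $C$, and $\oh_{C'}$ has no zero-dimensional torsion, so $C'$ is Cohen--Macaulay of pure dimension one, i.e.\ a curve in the sense of the paper.

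For the numerics, I would use the short exact sequence
\[
0 \to T \to \oh_C \to \oh_{C'} \to 0
\]
and additivity of Hilbert polynomials. Since $T$ is supported in dimension zero, its Hilbert polynomial is the constant $h^0(T) > 0$. Hence $\oh_C$ and $\oh_{C'}$ have the same leading coefficient, giving $d(C') = d(C) = d$, while the Euler characteristics satisfy $\chi(\oh_{C'}) = \chi(\oh_C) - h^0(T)$. Translating to arithmetic genus via $g(-) = 1 - \chi(\oh_{-})$ yields
\[
g(C') \= g + h^0(T) \> g,
\]
which is exactly the conclusion.

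There is essentially no obstacle here: the one point that requires a line of justification is that the zero-dimensional torsion of $\oh_C$ is nonzero, but this is exactly the Cohen--Macaulay criterion recalled in the paper's notation conventions (a one-dimensional proper scheme is a curve iff $\oh_C$ is pure), so the hypothesis "$C$ is not a curve" delivers $T \neq 0$ for free. I would keep the proof to a short paragraph, emphasizing the sequence above and the two consequences on degree and Euler characteristic.
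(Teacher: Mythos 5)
Your proof is correct and follows exactly the paper's argument: take the maximal zero-dimensional subsheaf $T\sse\oh_C$ (nonzero because $\oh_C$ is not pure), set $\oh_{C'}:=\oh_C/T$, and read off $d(C')=d$ and $g(C')=g+h^0(T)>g$ from the short exact sequence. The paper states these last two consequences as "clear," so your extra lines on Hilbert polynomials and Euler characteristics simply make explicit what the paper leaves implicit.
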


\begin{proof}
Note that for a one-dimensional Noetherian scheme, being Cohen-Macaulay is equivalent to having no isolated or embedded points. In other word, it is equivalent to $\oh_C$ being pure.

Since $C$ is not Cohen-Macaulay, $\oh_{C}$ contains finitely many sheaves supported on points, corresponding to embedded and isolated points of $C$. Let $T$ be the maximal zero-dimensional subsheaf. We take $C'$ to be $\oh_{C'}:=\oh_{C}/T$. Then it is clear that $C'$ is Cohen-Macaulay with $d(C')=d$ and $g(C')>g$.
\end{proof}

According to Lemma \ref{cm_lemma}, to get an upper bound for the genus of all one-dimensional closed subschemes, we only need to consider curves.

\begin{lemma} \label{rank_0}
Let $m\geq 1$ be an integer. Let $E\in \cA^{b}\subset \D^b(X)$ be a $\sigma_{a,b}$-semistable object for some $a>0$ with $\ch_{H,\leq 2}(E)=\big(0,m,-\frac{d}{n}-\frac{m^2}{2}\big)$. Assume that there is an actual wall for $E$ induced by a subobject $F\hookrightarrow E$ or quotient $E\twoheadrightarrow F$ in $\cA^b$ with $\ch_0(F)\geq 0$. 

\begin{enumerate}[(i)]
    \item If $-\frac{d}{nm}\leq b$, then $\ch_0(F)=0$.
    
    \item If $-\frac{d}{nm}-\frac{m}{4}\leq b<-\frac{d}{nm}$, then $0\leq \ch_0(F)\leq 1$.
\end{enumerate}

\end{lemma}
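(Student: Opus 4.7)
The plan is as follows. First, since $\ch_0(E) = 0$ and $H^2 \ch_1(E) = nm \neq 0$, Proposition \ref{wall_prop}(d) tells us that every numerical wall for $[E]\in \KK(X)$ is a semicircle in $\mathbb{R}_{>0}\times \mathbb{R}$ whose apex lies on the vertical line
\[b \;=\; b_* \;:=\; \frac{H\cdot \ch_2(E)}{H^2\cdot \ch_1(E)} \;=\; -\frac{d}{nm}-\frac{m}{2}.\]
So the actual wall $W(E,F)$ under consideration is a semicircle centred at $(0,b_*)$ of some radius $\rho>0$, and every point $(a_0,b_0)\in W(E,F)$ with $a_0>0$ satisfies $b_0<b_*+\rho$; in particular our given $(a,b)$ satisfies $b<b_*+\rho$.

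Next, I would bound $\rho$ in terms of $r:=\ch_0(F)\geq 0$. If $r\geq 1$, then, exactly as in the proof of Lemma \ref{rank_1}, I apply \cite[Proposition 4.16]{bayer2020desingularization} to the destabilising short exact sequence (interchanging $F$ with its cofactor $G$ if $F$ appears as a quotient) to obtain
\[\rho^2 \;\leq\; \frac{\Delta_H(E)}{4\, H^3\ch_0(F)\bigl(H^3\ch_0(F) - H^3\ch_0(E)\bigr)} \;=\; \frac{n^2 m^2}{4 n^2 r^2} \;=\; \frac{m^2}{4r^2},\]
where I have used $\Delta_H(E) = (H^2\ch_1(E))^2 = n^2 m^2$ since $\ch_0(E)=0$. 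Hence $\rho\leq m/(2r)$ whenever $r\geq 1$.

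Assembling the two ingredients, for any $r\geq 1$ we obtain $b<b_*+m/(2r)=-\tfrac{d}{nm}-\tfrac{m}{2}+\tfrac{m}{2r}$. In case (i), any $r\geq 1$ would force $b<-d/(nm)$, contradicting the hypothesis $-\tfrac{d}{nm}\leq b$; hence $r=0$. In case (ii), any $r\geq 2$ would force $b<-\tfrac{d}{nm}-\tfrac{m}{2}+\tfrac{m}{4}=-\tfrac{d}{nm}-\tfrac{m}{4}$, contradicting the hypothesis $-\tfrac{d}{nm}-\tfrac{m}{4}\leq b$; hence $0\leq r\leq 1$. I do not anticipate a serious technical obstacle, since the argument is a direct numerical consequence of Proposition \ref{wall_prop}(d) combined with the standard radius bound from \cite[Proposition 4.16]{bayer2020desingularization} that was already the main tool in Lemma \ref{rank_1}; the only mild subtlety is to check that this bound applies symmetrically whether $F$ is a subobject or a quotient of $E$, which is immediate from the symmetry of the destabilising short exact sequence.
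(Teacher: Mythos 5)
Your proof is correct and is essentially the paper's own argument: the paper proves this lemma by citing \cite[Proposition 4.16]{bayer2020desingularization} ``as in Lemma \ref{rank_1},'' and you have simply written out that computation — locating the wall's centre at $b_*=-\tfrac{d}{nm}-\tfrac{m}{2}$ via Proposition \ref{wall_prop}(d), bounding the radius by $\tfrac{m}{2r}$ with $\Delta_H(E)=n^2m^2$, and comparing $b_*+\rho$ with the stated ranges of $b$. Your remark that the radius bound is symmetric in the subobject and its cofactor (since $\ch_0(E)=0$ makes $\ch_0(F)\bigl(\ch_0(F)-\ch_0(E)\bigr)$ invariant under $F\leftrightarrow G$) is also the right justification for handling the quotient case.
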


\begin{proof}
The result immediately follows from \cite[Proposition 4.16]{bayer2020desingularization} as in Lemma \ref{rank_1}.
\end{proof}

\begin{lemma} \label{torsion_no_wall}
Let $D\in |\oh_X(m)|$ be an integral hypersurface section. Let $C\subset D$ be a one-dimensional closed subscheme of degree $d$. Then there is no actual wall for $I_{C/D}$ in the range $0<a$ and $-\frac{d}{nm}\leq b$. Thus $I_{C/D}$ is $\sigma_{a,b}$-semistable for any $a>0$ and $-\frac{d}{nm}\leq b$.
\end{lemma}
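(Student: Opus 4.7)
The plan is to show by contradiction that no actual wall exists for $I_{C/D}$ in the region $R = \{(a,b) : a>0,\ b\geq -\tfrac{d}{nm}\}$; combining this with $\sigma_{a,b}$-semistability for $a \gg 0$, Proposition~\ref{wall_prop}(a) will upgrade the semistability throughout $R$.

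First, one observes that $I_{C/D}$ is a torsion sheaf with $\ch_{H,\leq 2}(I_{C/D}) = (0, m, -\frac{m^2}{2} - \frac{d}{n})$, lying in $\cA^b$ for every $b$. The integrality of $D$ makes $\oh_D$ pure of dimension two, and hence so is its subsheaf $I_{C/D}$. As a rank-one torsion-free sheaf on the integral surface $D$, $I_{C/D}$ is $2$-Gieseker-stable (any proper nonzero subsheaf is $I_{Z/D}$ with $Z \supsetneq C$ in $D$, and has strictly smaller reduced Hilbert polynomial), so Lemma~\ref{bms lemma 2.7} (applicable since $\mu_H(I_{C/D}) = +\infty > b$) gives $\sigma_{a,b}$-stability for $a \gg 0$ and every $b$.

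Suppose then for contradiction that an actual wall for $I_{C/D}$ meets $R$ at some $(a_0, b_0)$, realised by an exact sequence $0 \to F \to I_{C/D} \to G \to 0$ of $\sigma_{a_0,b_0}$-semistable objects in $\cA^{b_0}$. The cohomology long exact sequence in $\Coh(X)$ yields $\cH^{-1}(F) \hookrightarrow \cH^{-1}(I_{C/D}) = 0$, so $F \in \Coh(X)$; Lemma~\ref{rank_0}(i) then forces $\ch_0(F) = 0$, so $F$ is torsion. Consequently $\cH^{-1}(G)$ embeds as a torsion-free subsheaf of the torsion sheaf $F$ and must vanish, so $G$ is also a sheaf and $F$ is an honest subsheaf of $I_{C/D}$ in $\Coh(X)$.

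The integrality of $D$ now pins down $F$: being nonzero (otherwise the sequence is trivial) and a subsheaf of $\oh_D$, it must equal $I_{Z/D}$ for a closed subscheme $C \subset Z \subsetneq D$, whence $\ch_1(F) = mH = \ch_1(I_{C/D})$ and so $\ch_1(G) = 0$. This gives $\Im Z_{a_0,b_0}(G) = H^2\ch_1(G) = 0$. But the equal-slope condition forces $Z_{a_0,b_0}(G) = Z_{a_0,b_0}(I_{C/D}) - Z_{a_0,b_0}(F)$ to be $\mathbb{R}$-proportional to $Z_{a_0,b_0}(I_{C/D})$, whose imaginary part $mn$ is nonzero; hence $Z_{a_0,b_0}(G) = 0$, which yields $H \cdot \ch_2(G) = 0$ and so $G$ is zero-dimensional. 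Then $\ch_{H,\leq 2}(F) = \ch_{H,\leq 2}(I_{C/D})$, making the numerical locus $W([I_{C/D}], [F])$ all of $\mathbb{R}_{>0}\times\mathbb{R}$ and contradicting the fact, from Proposition~\ref{wall_prop}(b), that a numerical wall must be a proper semicircle or vertical line. The main subtlety lies in using the integrality of $D$ to force $\ch_1(F) = mH$; the rest is a short bookkeeping with the central charges.
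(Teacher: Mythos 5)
Your proof is correct and follows essentially the same route as the paper's: semistability for $a \gg 0$ via Gieseker stability of $I_{C/D}$ on the integral divisor $D$, then ruling out walls by taking cohomology, applying Lemma \ref{rank_0}(i) to force $\ch_0(F)=0$, and using integrality of $D$ to force $\ch_1(F)=mH$, which makes the quotient's slope degenerate. The only difference is that you spell out the final contradiction slightly more carefully ($Z_{a_0,b_0}(G)=0$, so $G$ is zero-dimensional and $W(E,F)$ is not a genuine wall), where the paper simply notes that $\mu_{a,b}(G)=+\infty$ is incompatible with the equal-slope condition on the wall.
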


\begin{proof}
Since $D$ is integral, we know that $I_{C/D}$ is a Gieseker-stable sheaf on $X$. Then $I_{C/D}$ is $\sigma_{a,b}$-semistable for $a\gg 0$ and $b\in \mathbb{R}$ by Lemma \ref{bms lemma 2.7}.
Assume that there is an actual wall in the range $0<a$ and $-\frac{d}{nm}\leq b$ induced by an exact sequence of $\sigma_{a,b}$-semistable objects in $\cA^{b}$
\[0\to A\to I_{C/D}\to B\to 0.\]
From (i) in Lemma \ref{rank_0}, we know that $\ch_0(A)=\ch_0(B)=0$. Moreover, by taking cohomology, we see $\cH^0(A)=A\in \Coh(X)$ and have a long exact sequence
\[0\to \cH^{-1}(B)\to A\to I_{C/D}\to \cH^0(B)\to 0.\]
Since $A$ is a torsion sheaf and $\cH^{-1}(B)$ is torsion-free, we see $\cH^{-1}(B)=0$. Thus $A,B\in \Coh(X)$ and $A$ is a subsheaf of  $I_{C/D}$. From the fact that $D$ is integral and $I_{C/D}$ is a pure sheaf on $X$, if $A\neq 0$ it satisfies $\ch_{H,1}(A)=\ch_{H,1}(I_{C/D})=m$. But then $\mu_{a,b}(B)=+\infty$ and contradicts $\mu_{a,b}(I_{C/D})=\mu_{a,b}(B)$. Hence there is no actual wall for $I_{C/D}$ in the range $0<a$ and $-\frac{d}{nm}\leq b$ and the result follows.
\end{proof}

In the rest of this subsection, we will focus on a specific class of $X$.

\begin{proposition} \label{prop_hyperplane_general}
Let $X$ be a smooth projective 3-fold of Picard rank one with degree $n$ and index $i$. Assume that Conjecture \ref{SBG_conj} holds for $X$ and any $(a,b)\in \mathbb{R}_{>0}\times \mathbb{R}$. Let $C\subset X$ be a one-dimensional closed subscheme with degree $d$ and genus $g$, such that $C\subset D$ for an integral hypersurface section $D\in |\oh_{X}(m)|$. Then we have
\[g\leq \frac{1}{2nm}d^2+\frac{m-i}{2}d+1.\]
\end{proposition}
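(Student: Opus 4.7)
The plan is to apply the generalised Bogomolov--Gieseker inequality (Conjecture \ref{SBG_conj}) directly to the relative ideal sheaf $I_{C/D}$, viewed as a pure dimension-two coherent sheaf on $X$, at the distinguished parameter $b_0 := -\frac{d}{nm}$. This sheaf is the natural object to study because the containment $C\subset D$ lets us drop from rank one (ideals of curves) to rank zero, where wall analysis becomes trivial on a half-plane.

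First I would compute the truncated Chern character of $I_{C/D}$. The short exact sequence $0 \to I_{C/D} \to \oh_D \to \oh_C \to 0$ on $X$, together with $\ch(\oh_D) = 1 - e^{-mH}$ and the Hirzebruch--Riemann--Roch identity $\ch_3(\oh_C) = 1 - g - \tfrac{id}{2}$ (coming from $\chi(\oh_C) = 1-g$, $H\cdot \ch_2(\oh_C) = d$, and $K_X = -iH$), yields $\ch_{H,\leq 2}(I_{C/D}) = (0,\, m,\, -\tfrac{m^2}{2} - \tfrac{d}{n})$, placing $I_{C/D}$ precisely in the rank-zero setting of Lemma \ref{rank_0}, with $\ch_3(I_{C/D}) = \tfrac{m^3 n}{6} - 1 + g + \tfrac{id}{2}$.

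Next, because $D$ is integral, Lemma \ref{torsion_no_wall} guarantees that $I_{C/D}$ is $\sigma_{a,b}$-semistable for every $(a,b)$ with $a > 0$ and $b \geq -\frac{d}{nm}$. By hypothesis Conjecture \ref{SBG_conj} applies on this open region, giving $Q_{a,b}(I_{C/D}) \geq 0$ there; Remark \ref{rmk_boundary} extends the inequality to its closure, and in particular to the boundary limit $a \to 0^+$ at $b = b_0$.

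The final step is a direct calculation. At $b_0 = -\frac{d}{nm}$ one has $H\,\ch^{b_0}_2(I_{C/D}) = -\tfrac{m^2 n}{2}$ and $H^2\,\ch^{b_0}_1(I_{C/D}) = mn$, while $\ch^{b_0}_3(I_{C/D})$ expands in $g, d, m, n, i$ so that the square $4(H\,\ch^{b_0}_2)^2 = m^4 n^2$ cancels exactly against the $\tfrac{m^3 n}{6}$ contribution of $\ch_3(\oh_D)$ inside $6(H^2\,\ch^{b_0}_1)\,\ch^{b_0}_3$; after dividing by $6mn$, the inequality $Q_{0, b_0}(I_{C/D}) \geq 0$ collapses to the claimed bound $g \leq \frac{d^2}{2nm} + \frac{m-i}{2}d + 1$. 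The only genuine obstacle is the arithmetic bookkeeping of $\ch^{b_0}_3$; conceptually $b_0$ is forced as the boundary of the semistability region furnished by Lemma \ref{torsion_no_wall}, and the sharpness of the bound is mirrored in the exact cancellation inside $Q_{0,b_0}$.
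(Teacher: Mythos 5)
Your proposal is correct and follows essentially the same route as the paper: compute $\ch_H(I_{C/D})$, invoke Lemma \ref{torsion_no_wall} for $\sigma_{a,b}$-semistability on $b\geq -\tfrac{d}{nm}$, and apply Conjecture \ref{SBG_conj} at the boundary point $(a,b)=(0,-\tfrac{d}{nm})$ (via Remark \ref{rmk_boundary}), where the inequality $Q_{0,b_0}(I_{C/D})\geq 0$ reduces exactly to the stated bound. The arithmetic you outline checks out, so there is nothing to add.
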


\begin{proof}
Recall that
\[\ch_{H}(I_{C/D})=\big(0, m, -\frac{d}{n}-\frac{m^2}{2}, \frac{m^3}{6}+\frac{1}{n}(g(C)-1+\frac{i}{2}d)\big).\]
By Lemma \ref{torsion_no_wall}, we can apply Conjecture \ref{SBG_conj} to $I_{C/D}$ and $(a,b)=(0,-\frac{d}{mn})$. Then the result follows.
\end{proof}

\begin{lemma} \label{d_small}
Let $X$ be a smooth projective 3-fold of Picard rank one with degree $n$ and index $i$. Assume that Conjecture \ref{SBG_conj} holds for $X$ and any $(a,-1)\in \mathbb{R}_{>0}\times \mathbb{R}$. Let $C\subset X$ be a one-dimensional closed subscheme of degree $d$. Then we have
\[g(C)\leq \frac{2}{3n}d^2+(\frac{1}{3}-\frac{i}{2})d+1.\]
In particular, if $d\leq n$, then we have
\[g(C)\leq \frac{1}{2n}d^2+\frac{1-i}{2}d+1.\]
\end{lemma}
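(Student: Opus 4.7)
The plan is to apply the generalised Bogomolov--Gieseker inequality directly to $I_C$ at $b=-1$ and then send $a\to 0^+$. By Lemma \ref{cm_lemma} I may first replace $C$ by a pure (Cohen--Macaulay) one-dimensional subscheme of the same degree and strictly larger genus, so it suffices to treat the case where $C$ is a curve.

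The key structural input is Proposition \ref{wall}(i), which states that no actual wall for $I_C$ crosses the vertical line $b=-1$. Since $I_C$ is a slope-stable torsion-free sheaf with $\mu_H(I_C)=0>-1$, Lemma \ref{bms lemma 2.7} places $I_C$ in $\cA^{-1}$ and makes it $\sigma_{a,-1}$-semistable for $a\gg 0$; combined with the wall-freeness on $b=-1$, $I_C$ is $\sigma_{a,-1}$-semistable for every $a>0$. The assumed case of Conjecture \ref{SBG_conj} then gives $Q_{a,-1}(I_C)\ge 0$ for all $a>0$ and, by continuity (Remark \ref{rmk_boundary}), also at $a=0$.

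Next I would carry out the Chern character arithmetic. Using $\ch_{H,\le 2}(I_C)=(1,0,-d/n)$ together with Hirzebruch--Riemann--Roch in the form $\ch_3(I_C)=g-1+\tfrac{id}{2}$, a direct expansion yields $\Delta_H(I_C)=2nd$ and
\[H\ch_2^{-1}(I_C)=\tfrac{n}{2}-d,\qquad H^2\ch_1^{-1}(I_C)=n,\qquad \ch_3^{-1}(I_C)=g-1+\tfrac{id}{2}-d+\tfrac{n}{6}.\]
Plugging these into $Q_{a,-1}(I_C)=a^2\Delta_H(I_C)+4\bigl(H\ch_2^{-1}(I_C)\bigr)^2-6\bigl(H^2\ch_1^{-1}(I_C)\bigr)\ch_3^{-1}(I_C)\ge 0$, expanding, and collecting terms gives
\[6ng\le 2nd\,a^2+4d^2+2nd+6n-3ind.\]
Setting $a=0$ and dividing by $6n$ produces exactly the first bound $g\le \tfrac{2}{3n}d^2+(\tfrac{1}{3}-\tfrac{i}{2})d+1$.

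For the ``in particular'' clause, a one-line algebraic comparison shows that the difference between the first and second right-hand sides equals $\tfrac{d}{6}\bigl(\tfrac{d}{n}-1\bigr)$, which is non-positive whenever $d\le n$; hence in that range the first bound is already the stronger one and immediately implies the second. I do not anticipate any real obstacle: the delicate structural point---absence of actual walls along $b=-1$---is already packaged in Proposition \ref{wall}(i), and the rest of the argument is a mechanical computation with the generalised BG inequality.
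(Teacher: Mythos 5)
Your proposal is correct and follows essentially the same route as the paper: the paper's proof is precisely ``apply Conjecture \ref{SBG_conj} to $I_C$ at $(a,b)=(0,-1)$'' (justified by the absence of actual walls on $b=-1$ from Proposition \ref{wall}(i), semistability for $a\gg 0$ from Lemma \ref{bms lemma 2.7}, and continuity as in Remark \ref{rmk_boundary}), followed by the same elementary comparison $\frac{d}{6}\bigl(\frac{d}{n}-1\bigr)\le 0$ for $d\le n$. Your Chern character arithmetic ($\Delta_H(I_C)=2nd$, $H\ch_2^{-1}(I_C)=\frac{n}{2}-d$, etc.) checks out, and the preliminary reduction to Cohen--Macaulay curves via Lemma \ref{cm_lemma} is a harmless (indeed slightly more careful) addition.
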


\begin{proof}
The first inequality follows from applying Conjecture \ref{SBG_conj} to $I_C$ and $(a,b)=(0,-1)$.
When $d\leq n$, one can check that
\[\frac{2}{3n}d^2+(\frac{1}{3}-\frac{i}{2})d+1\leq \frac{1}{2n}d^2+\frac{1-i}{2}d+1.\]
\end{proof}

\begin{theorem} \label{BMT_thm}
Let $X$ be a smooth projective 3-fold of Picard rank one of degree $n$ and index $i$. Assume that $X$ satisfies Conjecture \ref{SBG_conj} for all $(a,b)\in \mathbb{R}_{>0}\times \mathbb{R}$. Then for any one-dimensional closed subscheme $C\subset X$ of degree $d$ and genus $g$, we have
\[g\leq \frac{1}{2n}d^2+\frac{1-i}{2}d+1.\]
\end{theorem}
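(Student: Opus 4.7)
My plan is to reduce to Cohen--Macaulay curves via Lemma~\ref{cm_lemma} and then induct on the degree $d$. The base case $d \leq n$ is exactly Lemma~\ref{d_small}. For the inductive step $d > n$, I analyse the wall-chamber structure of $I_C$ in the strip $b_d \leq b < 0$; by Proposition~\ref{wall}(i) walls can only occur in $b_d \leq b < -1$, and Corollary~\ref{cor_wall} classifies the possible destabilising subobjects.

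If $I_C$ has no wall in this strip, then it is $\sigma_{a,b}$-semistable throughout, so by Remark~\ref{rmk_boundary} the inequality $Q_{a,b}(I_C) \geq 0$ extends to the boundary point $(0, b_d)$. A direct expansion gives
\[
Q_{0,b_d}(I_C) \;=\; 6d^2 - 6\sqrt{dn}\Bigl(g - 1 + \tfrac{id}{2}\Bigr) \;\geq\; 0,
\]
so $g \leq 1 - \tfrac{id}{2} + d\sqrt{d/n}$. The desired bound follows from the algebraic identity $\tfrac{d^2}{2n} + \tfrac{d}{2} - d\sqrt{d/n} = \tfrac{d}{2}\bigl(1 - \sqrt{d/n}\bigr)^2 \geq 0$.

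When a wall exists, consider the outermost one. By Corollary~\ref{cor_wall}, either $A = \mathcal{O}_X(-kH)$ (so $C$ lies in some $D \in |\mathcal{O}_X(k)|$) or $A = I_{C_1}(-kH)$, the latter giving a decomposition $d_1 + d_2 = d$ together with $g(C) = g(C_1) + g(C_2) + kd_1 - 1$. Setting $f(d) := \tfrac{d^2}{2n} + \tfrac{1-i}{2}d + 1$, the inductive hypothesis $g(C_j) \leq f(d_j)$ for $j = 1, 2$ reduces $g(C) \leq f(d)$ to the inequality $d_2 \geq kn$. For $k \geq 2$ this is immediate from $d_1 < d - \tfrac{k^2 n}{2}$; for $k = 1$ the requirement that the wall's right endpoint lies at $b \geq b_d$, applied to the wall equation~\eqref{circle}, forces $d_2 \geq \tfrac{3}{2}\sqrt{dn} - \tfrac{n}{2} > n$ precisely because $d > n$. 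In the containment case with $k = 1$, the hypothesis $\mathrm{Pic}(X) = \mathbb{Z}\cdot H$ forces $D$ to be integral, and Proposition~\ref{prop_hyperplane_general} with $m = 1$ yields exactly $g \leq f(d)$.

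The main obstacle is the containment case with $k \geq 2$: here Proposition~\ref{prop_hyperplane_general} only delivers $g \leq \tfrac{d^2}{2nk} + \tfrac{k-i}{2}d + 1$, which implies $f(d)$ only when $d \geq kn$. To treat $d < kn$, I would replace $k$ by the minimal $k_0$ with $H^0(I_C(k_0)) \neq 0$: for irreducible $C$, this minimality forces the chosen $D$ to be integral (since any reducible decomposition of $D$ would place $C$ inside a component of smaller degree, contradicting minimality), while for reducible $C$ the components lie in strictly smaller hypersurfaces and the bound follows by induction on the irreducible components combined with the standard union-of-curves genus formula. Alternatively, applying the generalised Bogomolov--Gieseker inequality directly to the rank-zero sheaf $I_{C/D}$ at a cleverly chosen boundary point, in the spirit of Lemma~\ref{torsion_no_wall} and Proposition~\ref{prop_hyperplane_general}, should close the remaining gap.
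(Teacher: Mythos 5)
Your overall architecture (reduce to Cohen--Macaulay curves, induct on $d$, split according to whether $I_C$ has a wall in a strip above $b_d$, use the decomposition formula $g(C)=g(C_1)+g(C_2)+kd_1-1$ in the wall case) is the paper's strategy, and two of your three cases are correct and in places cleaner than the paper's: the computation $Q_{0,b_d}(I_C)=6d^2-6\sqrt{dn}\bigl(g-1+\tfrac{id}{2}\bigr)$ together with the identity $\tfrac{d^2}{2n}+\tfrac{d}{2}-d\sqrt{d/n}=\tfrac{d}{2}(1-\sqrt{d/n})^2$ handles the no-wall case in one stroke (the paper needs two subcases, using the point $(\sqrt{2/9},-4/3)$ when $d\geq\tfrac{16}{9}n$), and your observation that the right endpoint of a $k=1$ wall lying at $b\geq b_d$ forces $d_2\geq\tfrac{3}{2}\sqrt{dn}-\tfrac{n}{2}>n$ correctly closes the decomposition case.

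However, the case you flag as ``the main obstacle'' is a genuine gap, and neither of your proposed repairs closes it. The problem is containment: if the destabilizer is $\oh_X(-kH)$ with $k\geq 2$, you know $C\subset D$ for some $D\in|\oh_X(k)|$, and while the constraint $\lceil b\rceil\leq -k$ with $b\geq b_d$ does give $d\geq k^2n\geq kn$ (so the numerics of Proposition \ref{prop_hyperplane_general} would suffice), that proposition requires $D$ \emph{integral}, which fails in general for $k\geq 2$. Your minimal-$k_0$ argument only forces integrality of $D$ when $C$ itself is integral (for $C$ irreducible but non-reduced, $C$ need not lie in any component of a reducible $D$), and for reducible $C$ the ``standard union-of-curves genus formula'' is exactly where the difficulty lives: $g(C'\cup C'')=g(C')+g(C'')+\mathrm{length}(C'\cap C'')-1$, and without a bound on the intersection length (which is precisely what the wall-crossing analysis is designed to produce) the componentwise induction gives nothing. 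Likewise, applying the generalized BG inequality to $I_{C/D}$ via Lemma \ref{torsion_no_wall} again presupposes $D$ integral. The paper sidesteps this case entirely: it first disposes of ``$C$ contained in a hyperplane section'' as a standalone case (where $D$ is automatically integral because $\Pic(X)=\ZZ H$), and then searches for walls only in the truncated strip $\max\{b_d,-2\}<b<0$, where $\lceil b\rceil\geq -1$ forces $k=1$, so the only possible containment is in a hyperplane section and every remaining wall is of decomposition type. The price of the truncation is that $(0,b_d)$ is no longer reachable when $d>4n$, which is why the paper substitutes the interior point $(\sqrt{2/9},-4/3)$ there; adopting that truncation would let you delete your problematic case entirely while keeping the rest of your argument.
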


\begin{proof}

By Lemma \ref{cm_lemma}, we can assume that $C$ is a curve.

\textbf{Case 1.} $C$ is contained in a hyperplane section $D$ of $X$.

Since $X$ has Picard rank one, we know that $D$ is integral. Then the result follows from the case $m=1$ in Proposition \ref{prop_hyperplane_general}.

\textbf{Case 2.} $C$ is 1-neutral.

In this case we know that $I_{C}$ is $\sigma_{a,b}$-stable for any $a>0$ and  $\max\{b_d, -2\}<b<0$.
If $d\geq \frac{16}{9}n$, then we have $b_d\leq -\frac{4}{3}$. In this case, the result follows from applying Conjecture \ref{SBG_conj} to $I_C$ and $(a,b)=(\sqrt{\frac{2}{9}}, -\frac{4}{3})$.

Now we assume that $d<\frac{16}{9}n$. Then $-2<b_d$. We apply Conjecture \ref{SBG_conj} to $I_C$ and $(a,b)=(0, b_d)$ and get
\[g\leq \frac{2}{-3nb_d}d^2+\frac{-b_d}{3}d+1-\frac{i}{2}d.\]
Thus we only need to show

\begin{equation}\label{eq4}
\frac{2}{-3nb_d}d+\frac{-b_d}{3}\leq \frac{1}{2n}d+\frac{1}{2}.
\end{equation}

Since $b_d=-\sqrt{\frac{d}{n}}$, \eqref{eq4} is equivalent to
\[-b_d\leq \frac{1}{2n}d+\frac{1}{2}=\frac{b_d^2+1}{2},\]
which of course always holds.

\textbf{Case 3.} $C$ is not contained in any hyperplane section of $X$ and is not 1-neutral.

We prove this case by induction on $d(C)$.
If $d(C)\leq n$, the result follows from Lemma \ref{d_small}. Now let $d\geq n+1$ be an integer such that
\[g(C)\leq \frac{1}{2n}d(C)^2+\frac{1-i}{2}d(C)+1\]
holds for any curve $C\subset X$ of degree $d(C)<d$ that is not 1-neutral and not contained in any hyperplane section. We are going to deal with the case $d(C)=d$. 
By definition and Corollary \ref{cor_wall}, we have
\[g(C)=g(C_1)+g(C_2)+d_1-1,\]
where $C_1\subset C$ is a curve of degree $d_1$ and $C_2\subset C$ is a one-dimensional closed subscheme of degree $d_2$ satisfying $d_1+d_2=d$ and
\[d-1\geq d_2> \max\{\frac{n}{2}, \sqrt{2nd}-\frac{n}{2}\}.\]
Since $d_i<d$, by the induction hypothesis,  Lemma \ref{cm_lemma} and previous two cases, we see
\[g(C_i)\leq \frac{1}{2n}d_i^2+\frac{1-i}{2}d_i+1\]
holds for each $i$.
Then we obtain
\[g(C)=g(C_1)+g(C_2)+d_1-1\leq \frac{1}{2n}d^2+\frac{1-i}{2}d+1+d_1(1-\frac{d_2}{n}).\]
Moreover, by (b) and (c) of Proposition \ref{wall_prop}, we can assume that $W:=W(I_{C_1}(-H), I_C)$ is the upper-most actual wall for $I_C$, i.e.~ $I_C$ is $\sigma_{a,b}$-semistable for any $(a,b)$ over the wall $W$. If $d_2\geq n$, then we are done. Now assume that $d_2\leq n-1$. Then by Proposition \ref{wall}, the actual wall given by $I_{C_1}(-H)$ is
\[W:=W(I_{C_1}(-H), I_C): a^2+\big(b+(\frac{d_2}{n}+\frac{1}{2})\big)^2=(\frac{d_2}{n}+\frac{1}{2})^2-\frac{2d}{n}.\]
Since $d_2\leq n-1$, we can apply Conjecture \ref{SBG_conj} to $I_C$ and
\[(a,b)=\big(0,-\frac{3}{2}+\frac{1}{n}+\sqrt{(\frac{3}{2}-\frac{1}{n})^2-\frac{2d}{n}}\big),\]
which gives
\[g(C)\leq (1-\frac{2}{3n})d-\frac{i}{2}d+1.\]
Since $d\geq n+1>n-\frac{4}{3}$, we obtain
\[g(C)\leq (1-\frac{2}{3n})d-\frac{i}{2}d+1<\frac{1}{2n}d^2+\frac{1-i}{2}d+1\]
and finish the proof.

\end{proof}

\subsection{Three-dimensional hypersurfaces} \label{sec_4.1}

In this subsection, we are going to get a more refined bound for the genus of curves in a smooth hypersurface $X\subset \PP^4$ of degree $n$. By the adjunction formula, the index of $X$ is $i=5-n$.

\begin{lemma} \label{snna}
Let $C\subset \mathbb{P}^m$ be a one-dimensional closed subscheme with degree $d$ and genus $g$.

\begin{enumerate}[(i)]
    \item If $d=1$, then $g(C)\leq 0$. And $g(C)=0$ if and only if $C$ is a line.
    
    \item If $d=2$, then $g(C)\leq 0$. And $g(C)=0$ if and only if $C$ is a plane conic.
    
    \item If $d=3$, then $g(C)\leq 1$. And $g(C)=1$ if and only if $C$ is a plane cubic.
\end{enumerate}
\end{lemma}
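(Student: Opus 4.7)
The plan is to first invoke Lemma \ref{cm_lemma} to reduce to the case where $C$ is Cohen--Macaulay, since removing isolated or embedded zero-dimensional components preserves $d$ and only increases $g$. Under this reduction, each of the three cases becomes a classical statement about CM curves of small degree in projective space, which I would prove by case analysis on the structure of $C_{\mathrm{red}}$.

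For $d = 1$, the reduction $C_{\mathrm{red}}$ is a one-dimensional reduced subscheme of degree at most $1$, hence a line $L$. Since $C$ is CM, the kernel of $\oh_C \twoheadrightarrow \oh_L$ has no one-dimensional or embedded support, so it vanishes and $C = L$, giving $g(C) = 0$.

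For $d = 2$ I split on $C_{\mathrm{red}}$. Two distinct incident lines span a plane and make $C$ a reducible plane conic of genus $0$; two skew lines give $\chi(\oh_C) = 2$ and $g = -1$. If $C_{\mathrm{red}} = L$ is a single line, the CM hypothesis forces $K := I_L/I_C$ to be a line bundle on $L \cong \PP^1$, say $K \cong \oh_L(a)$, and the sequence $0 \to K \to \oh_C \to \oh_L \to 0$ gives $g(C) = -a - 1$. Since $C$ is a length-$2$ structure on $L$, at the generic point $\eta$ of $L$ the ring $\oh_{C,\eta} \cong k(L)[\epsilon]/\epsilon^2$ satisfies $I_L^2 \cdot \oh_{C,\eta} = 0$, and purity then forces $I_L^2 \subseteq I_C$ globally. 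This induces a surjection $\oh_L(-1)^{\oplus(m-1)} \cong N^*_{L/\PP^m} \twoheadrightarrow K$, forcing $a \geq -1$ (as $\Hom(\oh_L(-1), \oh_L(a)) = H^0(\oh_L(a+1))$ is nonzero only then). Hence $g(C) \leq 0$, with equality iff the surjection singles out a single normal direction, i.e.\ iff $C$ is a planar double line.

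For $d = 3$ the same strategy applies with more cases for $C_{\mathrm{red}}$: three distinct lines in various incidence patterns, two distinct lines with a length-$2$ structure on one of them, or a single line with a length-$3$ structure. In each case the CM hypothesis lets one express $\oh_C$ as an iterated extension of line bundles on $\PP^1$'s, and the bound $g(C) \leq 1$ with equality only for plane cubics comes from surjections out of the conormal sheaf and, in the triple-structure case, out of $\Sym^2 N^*_{L/\PP^m} \cong \oh_L(-2)^{\oplus \binom{m}{2}}$. I expect the triple-structure case to be the main technical obstacle; I would filter $\oh_C$ by $I_L$-adic powers $\oh_C \supseteq I_L \cdot \oh_C \supseteq I_L^2 \cdot \oh_C \supseteq 0$ and bound the graded pieces via $\Hom(\oh_L(-k), \oh_L(a)) = H^0(\oh_L(a+k))$, then add up the resulting Euler characteristics. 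A cleaner global alternative valid for all $d \leq 3$ is the classical Castelnuovo-type bound $g(C) \leq \binom{d-1}{2}$ for any CM curve in $\PP^m$, with equality iff $C$ is a plane curve.
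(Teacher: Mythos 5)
Your argument is essentially correct, but it takes a different route from the paper: the paper disposes of this lemma in one line by citing a standard reference (\cite[Corollary 1.38]{sanna2014rational}), whereas you give a self-contained proof by reducing to Cohen--Macaulay curves via Lemma \ref{cm_lemma} and then analysing the possible structures of $C_{\mathrm{red}}$ together with the $I_L$-adic filtration of $\oh_C$ in the non-reduced cases. Your $d=1$ and $d=2$ cases are carried out correctly (the key points --- that a rank-one pure sheaf on $\PP^1$ is a line bundle, that purity kills $I_L^2\oh_C$, and that the resulting surjection $N^*_{L/\PP^m}\twoheadrightarrow\oh_L(a)$ forces $a\geq-1$ with equality exactly for planar doublings --- are all standard and sound), and the filtration strategy you outline for the triple-line case of $d=3$ does close the argument: one gets $\chi(\oh_C)=1+(a+1)+(b+1)$ with $a\geq-1$, $b\geq-2$, hence $g\leq1$ with equality only for the planar triple line. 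The payoff of your approach is a proof that works verbatim for locally CM one-dimensional schemes in any $\PP^m$ without appealing to outside sources; the cost is the lengthy but routine enumeration of configurations of $C_{\mathrm{red}}$ for $d=3$, which you only sketch. One caveat: the ``cleaner global alternative'' you mention at the end, namely $g\leq\binom{d-1}{2}$ for all CM curves with equality iff $C$ is planar, is not an elementary shortcut --- for arbitrary (non-reduced, possibly degenerate) CM curves this is precisely Hartshorne's theorem \cite{Hartshorne1994TheGO}, which the paper's Theorem \ref{bound_quintic} recovers as the case $n=1$; invoking it here would just replace one citation by another, so for a genuinely self-contained proof you should stick with the case analysis.
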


\begin{proof}
The proof is standard. See e.g.~\cite[Corollary 1.38]{sanna2014rational}.
\end{proof}

\begin{proposition} \label{prop_hyperplane}
Let $C\subset \PP^3$ be a one-dimensional closed subscheme with degree $d$ and genus $g$, such that $C\subset D$ for an integral surface $D\in |\oh_{\PP^3}(m)|$ of degree $m\geq 1$. Then we have
\[g\leq \frac{1}{2m}d^2+\frac{m-4}{2}d+1.\]
%\[g\leq \min\{\frac{(d-1)(d-2)}{2}, \frac{d^2+5d+10}{10}\}.\]
\end{proposition}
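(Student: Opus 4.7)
The plan is to deduce this directly from Proposition \ref{prop_hyperplane_general} by specializing to $X = \PP^3$. First, I would verify the hypotheses of that proposition for $X = \PP^3$: it is a smooth projective 3-fold of Picard rank one with degree $n = H^3 = 1$ and, since $K_{\PP^3} = -4H$, index $i = 4$. By Theorem \ref{SBG}(i), which covers all smooth Fano 3-folds of Picard rank one, Conjecture \ref{SBG_conj} holds on $\PP^3$ for every $(a,b) \in \mathbb{R}_{>0} \times \mathbb{R}$, so the hypotheses of Proposition \ref{prop_hyperplane_general} are satisfied.

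Second, the surface $D \in |\oh_{\PP^3}(m)|$ is assumed integral, which is exactly the integrality condition required by Proposition \ref{prop_hyperplane_general}, so that $I_{C/D}$ is Gieseker-stable on $\PP^3$ and remains $\sigma_{a,b}$-semistable across the relevant range by Lemma \ref{torsion_no_wall}. Plugging $n = 1$ and $i = 4$ into the general bound $g \leq \frac{1}{2nm}d^2 + \frac{m-i}{2}d + 1$ then yields exactly
\[g \leq \frac{1}{2m}d^2 + \frac{m-4}{2}d + 1,\]
as claimed.

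There is no serious obstacle here, since all the heavy lifting was already carried out in proving Proposition \ref{prop_hyperplane_general}: the construction of tilt-stability $\sigma_{a,b}$, the absence of actual walls for $I_{C/D}$ in the range $b \geq -\frac{d}{nm}$ (Lemma \ref{torsion_no_wall}), and the evaluation of the generalized Bogomolov--Gieseker inequality $Q_{a,b}(I_{C/D}) \geq 0$ at the boundary point $(a,b) = (0, -\frac{d}{nm})$ (invoking Remark \ref{rmk_boundary}). The only input specific to $\PP^3$ is that it satisfies Conjecture \ref{SBG_conj} unconditionally, which is precisely Theorem \ref{SBG}(i). Thus the proof reduces to a one-line citation of the general proposition combined with the instance $n=1$, $i=4$ of its hypotheses.
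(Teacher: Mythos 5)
Your proof is correct and follows exactly the paper's route: the paper also deduces this by citing Theorem \ref{SBG} together with Proposition \ref{prop_hyperplane_general}, and your specialization $n=1$, $i=4$ for $\PP^3$ is precisely what makes the general bound collapse to the stated one.
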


\begin{proof}
This follows from Theorem \ref{SBG} and Proposition \ref{prop_hyperplane_general}.
\end{proof}

\begin{remark}
This proposition can be regarded as a generalization of \cite{harris1980}.
\end{remark}

\begin{corollary} \label{in D}
Let $X\subset \PP^4$ be a smooth hypersurface of degree $n\geq 1$. Let $C\subset X$ be a one-dimensional closed subscheme of degree $d$ contained in some hyperplane sections of $X$. Then
\[g(C)\leq \frac{1}{2n}d^2+\frac{n-4}{2}d+1.\]
\end{corollary}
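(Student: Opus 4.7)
The plan is to reduce directly to Proposition \ref{prop_hyperplane}. By hypothesis there is a hyperplane $H\subset\PP^4$ with $C\subset Y$, where $Y:=X\cap H$ is a degree-$n$ hypersurface in $H\cong\PP^3$. The strategy is to show that $Y$ is automatically an integral surface in $\PP^3$; granting this, Proposition \ref{prop_hyperplane} applied with $m=n$ yields exactly the desired bound $g(C)\leq \frac{1}{2n}d^2+\frac{n-4}{2}d+1$.

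To establish integrality of $Y$, I would first dispose of the trivial case $n=1$, where $Y$ is a plane in $\PP^3$. For $n\geq 2$, the Lefschetz hyperplane theorem applied to the smooth three-dimensional hypersurface $X\subset\PP^4$ gives $\Pic(X)=\ZZ\cdot H|_X$. Writing $Y=\sum_i a_i Y_i$ as a divisor on $\PP^3$ with $Y_i$ the distinct integral components of degree $m_i$, each $Y_i\subset X$ is a prime divisor on the smooth variety $X$, so $[Y_i]_X=k_i\, H|_X$ for some integer $k_i\geq 1$. The computation
\[
m_i=\deg_{\PP^4} Y_i=[Y_i]_X\cdot(H|_X)^2=k_i\,(H|_X)^3=k_i\, n\geq n,
\]
combined with $\sum_i a_i m_i=n$, forces a single component with $a_1=k_1=1$ and $m_1=n$. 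Hence $Y=Y_1$ is integral.

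With $Y$ integral, applying Proposition \ref{prop_hyperplane} to the integral surface $Y\subset\PP^3$ of degree $n$ and the closed subscheme $C\subset Y$ of degree $d$ immediately delivers the claimed bound. The only nontrivial point in the entire argument is the integrality of $Y$, which is a direct consequence of the Picard-rank-one constraint on $X$, so I do not anticipate any serious obstacle.
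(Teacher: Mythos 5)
Your proposal is correct and follows essentially the same route as the paper: view $C$ inside the hyperplane section $D=X\cap H$, regarded as a degree-$n$ surface in $\PP^3$, observe that $D$ is integral because $\Pic(X)=\ZZ\cdot H|_X$, and then invoke Proposition \ref{prop_hyperplane} with $m=n$. The only difference is that you spell out the degree computation behind the integrality of $D$, which the paper states in one line; the argument is the same.
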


\begin{proof}
If $C$ is contained in some hyperplane sections $D\in |\oh_X(1)|$, then we can view $C\subset D$ as a curve in $\PP^3$. Since $X$ has Picard rank one, we know that $D\subset \PP^3$ is integral. Then the result follows from Proposition \ref{prop_hyperplane}.
\end{proof}

\begin{proposition} \label{lem_quintic}
Let $X\subset \PP^4$ be a smooth hypersurface of degree $n\leq 5$. Let $C\subset X$ be a one-dimensional closed subscheme of degree $d>n$ and not contained in any hyperplane section of $X$. Then
\[g(C)\leq \frac{1}{2n}d^2+(\frac{n}{2}-\frac{1}{n}-2)d+2+\frac{1}{n}\leq \frac{1}{2n}d^2+\frac{n-4}{2}d+1.\]
\end{proposition}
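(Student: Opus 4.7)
The plan is to mirror the inductive structure of the proof of Theorem \ref{BMT_thm}, sharpened by the hypothesis that $C$ is not contained in any hyperplane section. By Lemma \ref{cm_lemma} I may assume $C$ is Cohen--Macaulay.

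The central geometric input is $\Hom(\oh_X(-H), I_C) = H^0(I_C(1)) = 0$, so by Corollary \ref{cor_wall} the subobject $A = \oh_X(-H)$ produces no actual wall for $I_C$; any actual wall in the range $b_d \leq b < 0$ must instead come from $A = I_{C_1}(-H)$ with $d_1 := d(C_1) \geq 1$, yielding the decomposition
\[g(C) = g(C_1) + g(C_2) + d_1 - 1, \qquad d_1 + d_2 = d, \qquad d_2 > \max\{\tfrac{n}{2}, \sqrt{2nd} - \tfrac{n}{2}\},\]
with $C_2$ contained in a hyperplane section $D$ of $X$. Moreover, the numerical wall $W(\oh_X(-H), I_C)$ is a semicircle meeting the $b$-axis at $-1$ and $-\tfrac{2d}{n}$, so for $d > n$ the point $(a,b) = (0,-2)$ lies in its interior disk.

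When $C$ is 1-neutral, I would exploit the non-actualness of $W(\oh_X(-H), I_C)$ to propagate $\sigma_{a,b}$-semistability of $I_C$ into the interior of this numerical wall: either $I_C$ is $\sigma$-stable at $(0,-2)$ (if no smaller walls from $I_{C_1}(-H)$ with $d_1 \geq 1$ pass through) or $(0,-2)$ lies on such a wall (hence $I_C$ is semistable there). Either way, since $b=-2 \in \mathbb{Z}$ the point lies in the closure of the quintic-admissible region, so by Remark \ref{rmk_boundary} and Theorem \ref{SBG} Conjecture \ref{SBG_conj} holds at $(0,-2)$. Evaluating
\[g(C) \leq -\frac{a^2 d}{3b} - \frac{bd}{3} - \frac{2d^2}{3bn} + 1 - \frac{id}{2}\]
at $(0,-2)$ gives $g(C) \leq \tfrac{d^2}{3n} + \tfrac{2d}{3} + 1 - \tfrac{id}{2}$, and the target now reduces to $d^2 - d(n+6) + 6(n+1) \geq 0$, a quadratic in $d$ with discriminant $n^2 - 12n + 12 < 0$ for $n \leq 5$, so the bound holds uniformly.

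When $C$ is not 1-neutral, I would induct on $d$ using the wall-crossing decomposition. Bound $g(C_2)$ via Proposition \ref{prop_hyperplane} applied to $C_2 \subset D \subset \mathbb{P}^3$ (where $D$ has degree $n$), and bound $g(C_1)$ by the induction hypothesis when $d_1 > n$ and $C_1$ is not contained in any hyperplane, or by Proposition \ref{prop_hyperplane}, Corollary \ref{in D} and Lemma \ref{d_small} otherwise. The arithmetic reduces to the inequality $(d_1 - 1)(n + 1 - d_2) \leq 0$, which is automatic once $d_2 \geq n + 1$; the residual subcase $d_2 \leq n$ forces $d_1 \geq d - n$ and is handled by iterating the wall-crossing on $C_1$, whose own residual subscheme again sits in a hyperplane section. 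The main obstacle will be rigorously establishing that in the 1-neutral case $I_C$ is $\sigma$-semistable at $(0,-2)$; this requires controlling the nested walls from $I_{C_1}(-H)$ subobjects via Proposition \ref{wall_prop}(b), together with the observation that if such a wall passes through $(0,-2)$ then $C$ contains the corresponding subcurve $C_1$, and the wall-crossing decomposition applies directly to reduce to the lower-degree inductive case.
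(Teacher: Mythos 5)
Your overall architecture (split into a 1-neutral case handled by the generalized Bogomolov--Gieseker inequality and a non-1-neutral case handled by the wall-crossing decomposition $g(C)=g(C_1)+g(C_2)+d_1-1$ plus induction on $d$) is the same as the paper's, and your observation that $H^0(I_C(1))=0$ rules out the destabilizer $\oh_X(-H)$ is correct and is implicitly used there too. The genuine gap is in the 1-neutral case for $n<d<4n$, i.e.\ exactly when $b_d=-\sqrt{d/n}>-2$. Being 1-neutral only forbids actual walls in the strip $b_d<b<0$; it says nothing about walls that are semicircles entirely contained in $\{b\le b_d\}$. Such a wall $W(I_{C_1}(-H),I_C)$ can have its rightmost point $b_+\le b_d$ while its disk strictly contains $(0,-2)$ (a short computation shows $(0,-2)$ lies strictly inside whenever $d_2>\frac{n+d}{2}$, which is compatible with $b_+\le b_d$), and below such a wall $I_C$ need not be semistable, so $Q_{0,-2}(I_C)\ge 0$ is simply not available. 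Your dichotomy ``either stable at $(0,-2)$ or $(0,-2)$ lies on a wall'' omits precisely this third possibility, and your proposed repair does not work either: for walls with $b<b_d$ Lemma \ref{rank_1} no longer forces the destabilizer to have rank one, so Corollary \ref{cor_wall} and the decomposition $g(C)=g(C_1)+g(C_2)+d_1-1$ are not at your disposal there. The paper avoids all of this by evaluating $Q_{a,b}(I_C)$ at $b=\max\{-2,b_d\}$ --- the edge of the region where 1-neutrality actually guarantees stability --- and closing the finitely many degrees $n<d<4n$ (resp.\ $d\le 19$ for $n=5$) by a case-by-case check using $g(C)\in\ZZ$.

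Two smaller problems. First, the discriminant $n^2-12n+12$ equals $1>0$ when $n=1$, so your ``uniform'' conclusion fails for $X=\PP^3$: the quadratic $d^2-7d+12=(d-3)(d-4)$ is negative for $3<d<4$ and the bound is rescued only by integrality of $d$. Relatedly, the very low-degree cases cannot be reached by the BG inequality at all: for $n\le 2$ and $d=3$ the target forces $g\le 0$, which requires excluding plane cubics via Lemma \ref{snna} and the fact that $X$ contains no plane, as the paper does at the outset; and the base case $d=n+1$ of your induction (handled in the paper by Lemma \ref{d_small}) is not addressed. In the non-1-neutral case your arithmetic $(d_1-1)(n+1-d_2)\le 0$ is equivalent to the paper's, but the ``residual subcase $d_2\le n$'' you propose to handle by iterating wall-crossing on $C_1$ does not actually occur: the bound $d_2>\sqrt{2nd}-\frac{n}{2}$ from Corollary \ref{cor_wall} already forces $d_2\ge n+1$ for all $d\ge n+2$ and $n\le 5$, which is the cleaner route taken in the paper.
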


\begin{proof}
By Lemma \ref{cm_lemma}, we can assume that $C$ is a curve. Since $d\geq n+1$, the second inequality holds automatically. Then we only need to prove the first one.

If $d=3$, we have $1\leq n\leq 2$. By Lemma \ref{snna}, $g(C)\leq 1$ and $g(C)=1$ if and only if $C$ is a plane cubic. Since $X$ does not contain any plane, we know that $\PP^2\cap X$ is a curve of degree $n\leq 2$, which can not contain $C$. Then in this case, we obtain $g(C)\leq 0$. When $d=2$ and $n=1$, from Lemma \ref{snna} we get $g(C)\leq 0$. Then our statement holds in these cases. Therefore, in the following, we can assume that $d> \max\{n,3\}$.

%And when $1\leq n\leq 2$ and $n+1\leq d\leq 3$, the result follows from Lemma \ref{snna}. Then in the following, we can assume that $d> \max\{n,3\}$.

\textbf{Case 1.} $n=5$ and $C$ is 1-neutral.

By the definition of 1-neutral, we can apply Theorem \ref{SBG} to $I_C$ and 
\[(a,b)=(\sqrt{(b-\lfloor b\rfloor)(\lfloor b\rfloor+1-b)},\max\{-2,b_d\}).\]
When $n+1\leq d\leq 19$, since $g(C)\in \ZZ$ and $n=5$, one can check case-by-case that we obtain \[g(C)\leq \frac{1}{2n}d^2+(\frac{n}{2}-\frac{1}{n}-2)d+2+\frac{1}{n} .\]
When $d\geq 20$, we have $\max\{-2,b_d\}=-2$. Thus applying Theorem \ref{SBG} to $I_C$ and $(a,b)=(0,-2)$ implies
\[g(C)\leq \frac{1}{15}d^2+\frac{2}{3}d+1\leq \frac{1}{2n}d^2+(\frac{n}{2}-\frac{1}{n}-2)d+2+\frac{1}{n}.\]

\textbf{Case 2.} $n\leq 4$ and $C$ is 1-neutral.

When $d\geq 4n$, we have $\max\{-2,b_d\}=-2$. Then as the first case, applying Theorem \ref{SBG} to $I_C$ and $(a,b)=(0,-2)$ gives
\[g(C)\leq \frac{1}{3n}d^2+\frac{3n-11}{6}d+1\leq \frac{1}{2n}d^2+(\frac{n}{2}-\frac{1}{n}-2)d+2+\frac{1}{n}.\]
When $\max\{n,3\}<d<4n$, we apply Theorem \ref{SBG} to $I_C$ and 
\[(a,b)=(0,\max\{-2,b_d\})\]
as in the first case. Since $g(C)\in \ZZ$, it is immediately to check case-by-case that
\[g(C)\leq \frac{1}{2n}d^2+(\frac{n}{2}-\frac{1}{n}-2)d+2+\frac{1}{n}.\]

\textbf{Case 3.} $C$ is not 1-neutral.

We prove this case by induction on $d$. First we assume that $d=n+1$. When $n\leq 2$, the result is proved at the beginning of this proof. When $3\leq n\leq 5$, the result follows from Lemma \ref{d_small}.

Now let $d'\geq n+2$ be an integer such that
\[g(C)\leq \frac{1}{2n}d^2+(\frac{n}{2}-\frac{1}{n}-2)d+2+\frac{1}{n}\]
holds for any curve $C\subset X$ of degree $d<d'$ that is not 1-neutral and not contained in any hyperplane section. We are going to deal with the case $d=d'$. 

By definition and Corollary \ref{cor_wall}, we have
\[g(C)=g(C_1)+g(C_2)+d_1-1,\]
where $C_1\subset C$ is a curve of degree $d_1$ and $C_2\subset C$ is a one-dimensional closed subscheme of degree $d_2$ satisfying $d_1+d_2=d$ and
\[d-1\geq d_2> \max\{\frac{n}{2}, \sqrt{2nd}-\frac{n}{2}\}=\sqrt{2nd}-\frac{n}{2}.\]
Since $d_i<d=d'$, by the induction hypothesis, Corollary \ref{in D}, Lemma \ref{d_small} and previous two cases, we see
\[g(C_i)\leq \frac{1}{2n}d_i^2+\frac{n-4}{2}d_i+1\]
for each $i$. Then we obtain
\[g(C)=g(C_1)+g(C_2)+d_1-1\leq \frac{1}{2n}d^2+\frac{n-4}{2}d+1+d_1(1-\frac{d_2}{n}).\]
Therefore, we only need to prove
\[d_1(1-\frac{d_2}{n})\leq 1+\frac{1-d}{n}.\]
To this end, note that
\[d_1(1-\frac{d_2}{n})-1-\frac{1-d}{n}=\frac{1}{n}(d_2-n-1)(d_2-d+1).\]
Since $d_2\leq d-1$, we only need to show $d_2\geq n+1$. When $d=n+2$, one can check case-by-case that $\lceil \sqrt{2nd}-\frac{n}{2}\rceil=n+1$. When $d\geq n+3$, since $1\leq n\leq 5$, we always have
\[\sqrt{2nd}-\frac{n}{2}>n+1.\]
Since $d_2\in \ZZ$ and $d_2>\sqrt{2nd}-\frac{n}{2}$, this finishes our proof.

\end{proof}

%Now we are ready to prove a more refined bound of the genus for an arbitrary curve in $X$.

\begin{theorem} \label{bound_quintic}
Let $X\subset \PP^4$ be a smooth hypersurface of degree $n\leq 5$. Then for any one-dimensional closed subscheme $C\subset X$ of degree $d$ and genus $g$, we have
\[g\leq \frac{1}{2n}d^2+\frac{n-4}{2}d+1.\]
Moreover, when $C$ is not contained in any hyperplane section of $X$, we have
    \[g\leq \frac{1}{2n}d^2+(\frac{n}{2}-\frac{1}{n}-2)d+2+\frac{1}{n}.\]
\end{theorem}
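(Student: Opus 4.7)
This theorem is essentially an assembly of the earlier results in this subsection, split into cases according to whether $C$ is contained in a hyperplane section of $X$ and whether $d$ exceeds $n$. The plan is to reduce to curves via Lemma~\ref{cm_lemma} and then dispatch each case to the appropriate earlier statement, using throughout that a smooth hypersurface $X \subset \PP^4$ of degree $n$ has index $i = 5-n$ by adjunction, and that Theorem~\ref{SBG}(i)/(iii) supplies the generalized Bogomolov--Gieseker inequality in the ranges used by the earlier lemmas.

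First, if $C$ is contained in some hyperplane section $D \in |\oh_X(1)|$, I invoke Corollary~\ref{in D} directly to obtain
\begin{equation*}
g \leq \frac{1}{2n}d^2 + \frac{n-4}{2}d + 1,
\end{equation*}
which is the first bound (and the second bound is vacuous in this case). Second, if $C$ is not contained in any hyperplane section and $d > n$, I apply Proposition~\ref{lem_quintic}, which gives both the refined second bound $g \leq \frac{1}{2n}d^2 + (\frac{n}{2} - \frac{1}{n} - 2)d + 2 + \frac{1}{n}$ and (as explicitly chained there) the first bound.

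The only remaining case is when $C$ is not contained in any hyperplane section and $d \leq n$. Here I apply Lemma~\ref{d_small}, whose hypothesis is verified by Theorem~\ref{SBG}: since $i = 5-n$, its conclusion $g \leq \frac{1}{2n}d^2 + \frac{1-i}{2}d + 1$ becomes exactly the first bound. For the second bound I compare the two expressions:
\begin{equation*}
\left(\frac{1}{2n}d^2 + \Bigl(\frac{n}{2} - \frac{1}{n} - 2\Bigr)d + 2 + \frac{1}{n}\right) - \left(\frac{1}{2n}d^2 + \frac{n-4}{2}d + 1\right) = \frac{n+1-d}{n},
\end{equation*}
which is nonnegative whenever $d \leq n+1$. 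Thus the second bound is weaker than the first in the range $d \leq n$, and therefore also holds.

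There is no real obstacle: the heavy machinery (wall-crossing of tilt-stability, induction on $d$, the case analysis for $1$-neutral curves, and the computation using the upper-most actual wall) is already carried out in Proposition~\ref{lem_quintic} and its predecessors. The only care needed is the algebraic comparison of the two bounds for small $d$ above, the index computation via adjunction, and the verification that $n \leq 5$ places us in the range where Theorem~\ref{SBG} applies to the specific $(a,b)$ values used in Corollary~\ref{in D}, Lemma~\ref{d_small}, and Proposition~\ref{lem_quintic}.
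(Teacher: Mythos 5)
Your proposal is correct and follows essentially the same route as the paper: the paper likewise dispatches $d\leq n$ to Lemma~\ref{d_small} (where the second bound follows from the first since it differs by $\frac{n+1-d}{n}\geq 0$) and $d\geq n+1$ to Corollary~\ref{in D} and Proposition~\ref{lem_quintic}. Your reorganization of the cases by hyperplane containment first is immaterial, and your explicit algebraic comparison of the two bounds is a welcome spelling-out of what the paper leaves implicit.
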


\begin{proof}
When $d\leq n$, these two bound can be both deduced from Lemma \ref{d_small}. Thus we can assume that $d\geq n+1$.  Then the result follows from Corollary \ref{in D} and Proposition \ref{lem_quintic}.
\end{proof}

\subsection{Extremal curves} \label{extremal}

Let $X\subset \PP^4$ be a smooth hypersurface of degree $n\leq 5$.
By Proposition \ref{lem_quintic}, we can see that when $C$ has degree $d>n+1$ and is not contained in any hyperplane section, the genus $g$ of $C\subset X$ satisfies
\[g\leq \frac{1}{2n}d^2+(\frac{n}{2}-\frac{1}{n}-2)d+2+\frac{1}{n}<\frac{1}{2n}d^2+\frac{n-4}{2}d+1.\]
Thus for curves with genus $g=\frac{1}{2n}d^2+\frac{n-4}{2}d+1$, we have the following description.

\begin{lemma} \label{extremal_curve}
Let $X\subset \PP^4$ be a smooth hypersurface of degree $n\leq 5$. Let $C\subset X$ be a one-dimensional closed subscheme of degree $d\geq n+1$ and genus $g=\frac{1}{2n}d^2+\frac{n-4}{2}d+1$. Then $C$ is a curve contained in a unique hyperplane section of $X$.
\end{lemma}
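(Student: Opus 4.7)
The plan is to combine the two complementary genus bounds of Theorem \ref{bound_quintic} with a short plane-section argument for uniqueness; the wall-crossing work in Proposition \ref{lem_quintic} has already done the heavy lifting.

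\emph{Cohen--Macaulayness of $C$.} If $C$ were not a curve, Lemma \ref{cm_lemma} would extract a Cohen--Macaulay $C' \subset C$ with $d(C') = d$ and $g(C') > g$, contradicting the universal bound $g(C') \leq \frac{1}{2n} d^2 + \frac{n-4}{2} d + 1 = g$ in Theorem \ref{bound_quintic}. Hence $C$ is a curve.

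\emph{Containment in a hyperplane section.} Suppose $C$ lies in no hyperplane section of $X$. Since $d \geq n+1 > n$, the stronger half of Theorem \ref{bound_quintic} (equivalently Proposition \ref{lem_quintic}) yields
\[g \leq \frac{1}{2n} d^2 + \Bigl(\frac{n}{2} - \frac{1}{n} - 2\Bigr) d + 2 + \frac{1}{n}.\]
Subtracting the hypothesis $g = \frac{1}{2n} d^2 + \frac{n-4}{2} d + 1$ collapses this to $d \leq n+1$, so $d = n+1$. At this boundary value the two bounds coincide, so the inequality alone does not close the case; I would dispose of it by integrality. Direct substitution of $d = n+1$ shows the putative genus is non-integral for $n \in \{2, 3, 4, 5\}$, ruling out $C$, while the only integral case $n = 1$, $d = 2$, $g = 0$ is a plane conic in $X = \PP^3$ by Lemma \ref{snna}, which does lie in a hyperplane section. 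Either way we reach a contradiction.

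\emph{Uniqueness.} Suppose $C \subset D_1 \cap D_2$ for two distinct hyperplane sections $D_i = X \cap H_i$ with hyperplanes $H_i \subset \PP^4$, and set $P = H_1 \cap H_2 \cong \PP^2$. Then $C \subset X \cap P$ as schemes. For $n = 1$, $D_1 \cap D_2$ is a line in $\PP^3$, forcing $d \leq 1$ and contradicting $d \geq 2$. For $n \geq 2$, the variety $X$ has Picard rank one, and a one-line degree check in $\PP^4$ rules out $P \subset X$: otherwise $[P]_X = k [H]_X$ in $\Pic(X)$ for some positive integer $k$, forcing $1 = \deg_{\PP^4}(P) = k n$, impossible. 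Hence $X \cap P$ is cut out on $P$ by a single degree-$n$ equation and is a plane curve of degree $n$, so $d \leq n$, again contradicting $d \geq n+1$. The main obstacle is the boundary case $d = n+1$ in the containment step, where the two genus bounds of Theorem \ref{bound_quintic} agree; it is resolved painlessly by the non-integrality of $g$ for $n \geq 2$ and the trivial geometric description for $n=1$.
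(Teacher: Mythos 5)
Your proposal is correct and follows essentially the same route as the paper: Lemma \ref{cm_lemma} plus Theorem \ref{bound_quintic} to force Cohen--Macaulayness, the strict gap between the two bounds of Theorem \ref{bound_quintic} to force containment in a hyperplane section for $d>n+1$, an integrality-of-$g$ check to eliminate $d=n+1$ for $2\leq n\leq 5$ (with Lemma \ref{snna} covering $n=1$, $d=2$), and the degree bound $d(C)\leq n$ for curves in two distinct hyperplane sections to get uniqueness. The only cosmetic difference is that the paper phrases the integrality step as ``$g\in\ZZ$ forces $d\in n\ZZ$'' rather than checking non-integrality directly at $d=n+1$.
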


\begin{proof}
By Lemma \ref{cm_lemma} and Theorem \ref{bound_quintic}, $C$ is a curve. If $d>n+1$, the above argument shows that $C$ is contained in some hyperplane sections of $X$.

When $n=1$ and $d=2$, the result follows from Lemma \ref{snna}. Thus when $n=1$, we can assume that $d>n+1$. From $n\leq 5$ and $g=\frac{1}{2n}d^2+\frac{n-4}{2}d+1\in \ZZ$, one can check that $d\in n\ZZ$. Thus if $2\leq n\leq 5$, we get $d\neq n+1$ and hence $d>n+1$. Then when $d\geq n+1$, we can conclude in all cases that $C$ is contained in some hyperplane sections of $X$.

Note that if $C$ is contained in two different hyperplane sections of $X$, then we have $d(C)\leq n$. Thus the uniqueness part follows.
\end{proof}

In the rest of this subsection, we work on $\PP^3$. All weak stability conditions and Chern characters are on $\PP^3$.

\begin{proposition}\label{prop_D}
Let $D\subset \PP^3$ be an integral surface of degree $n\geq 1$ and $C\subset D$ be a one-dimensional closed subscheme of degree $d$ and genus $g$. Then
\[g=\frac{1}{2n}d^2+\frac{n-4}{2}d+1\]
if and only if $d\in n\ZZ$ and $C\in |\oh_D(\frac{d}{n})|$.
\end{proposition}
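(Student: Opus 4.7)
My plan is to treat the two implications of the proposition separately, starting with the straightforward \emph{if} direction. If $d = nk$ for a non-negative integer $k$ and $C \in |\oh_D(k)|$, then $I_{C/D} \cong \oh_D(-k)$, and I combine the short exact sequence $0 \to \oh_D(-k) \to \oh_D \to \oh_C \to 0$ with the Koszul resolution $0 \to \oh_{\PP^3}(-n) \to \oh_{\PP^3} \to \oh_D \to 0$ to compute $\chi(\oh_C) = \chi(\oh_D) - \chi(\oh_D(-k))$ on $\PP^3$. A direct Euler characteristic calculation using these Hilbert polynomials yields $1 - g = 1 - \tfrac{d^2}{2n} - \tfrac{(n-4)d}{2}$, hence $g = \tfrac{d^2}{2n} + \tfrac{(n-4)d}{2} + 1$.

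For the \emph{only if} direction, suppose $g = g_{\max} := \tfrac{d^2}{2n} + \tfrac{(n-4)d}{2} + 1$. The first observation is that the bound in Proposition \ref{prop_hyperplane} was obtained by applying Theorem \ref{SBG}(i) to $I_{C/D}$ at the boundary point $(a,b) = (0, -d/n)$, via Lemma \ref{torsion_no_wall} and continuity of the quadratic form $Q_{a,b}$. A direct Chern character computation gives $Q_{0, -d/n}(I_{C/D}) = 6n(g_{\max} - g)$, so the hypothesis $g = g_{\max}$ forces $Q_{0, -d/n}(I_{C/D}) = 0$. Second, I reduce to the case that $C$ is Cohen--Macaulay: otherwise Lemma \ref{cm_lemma} produces a strictly smaller CM subscheme $C' \subsetneq C$ in $D$ with the same degree and strictly larger genus, contradicting Proposition \ref{prop_hyperplane}. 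Hence $I_{C/D}$ is a pure rank one torsion-free sheaf on $D$.

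The main and most delicate step is to show that $d \in n\ZZ$ and that $I_{C/D} \cong \oh_D(-d/n)$. When $g = g_{\max}$, the Chern character $\ch(I_{C/D})$ formally coincides with $\ch(\oh_D(-d/n))$, so $\chi(D, I_{C/D}(k)) = \chi(D, \oh_D) \geq 1$ whenever $k := d/n$ is a non-negative integer. Using Serre duality on the Gorenstein surface $D$ (with $\omega_D \cong \oh_D(n-4)$, since $D$ is a Cartier divisor in $\PP^3$) one obtains $H^2(D, I_{C/D}(k)) = 0$, whence $h^0(D, I_{C/D}(k)) \geq 1$. A non-zero global section yields a non-trivial morphism $\oh_D(-k) \to I_{C/D}$; since both sides are pure rank one sheaves on $D$ with identical Hilbert polynomials, this morphism must be an isomorphism, producing $C \in |\oh_D(k)|$.

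The principal obstacle is to first establish $d \in n\ZZ$ so that the cohomological argument above is meaningful. I expect to handle this by combining an integrality constraint on Chern classes (the prescribed extremal Chern character cannot be realized by any coherent sheaf unless $d/n$ is a genuine integer, which rules out many residues modulo $n$ directly) with a Castelnuovo--Halphen-type bound showing that the extremal value $g_{\max}$ cannot be attained for the remaining exceptional $d \notin n\ZZ$. Alternatively, one may exploit $Q_{0,-d/n}(I_{C/D}) = 0$ together with the reflexivity of $I_{C/D}$ on $D$ and a wall-crossing analysis to force the first Chern class of $I_{C/D}$ on $D$ to be a multiple of $H|_D$, giving $d = nk$ directly.
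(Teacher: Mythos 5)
Your \emph{if} direction and the identity $Q_{0,-d/n}(I_{C/D})=6n\bigl(\frac{1}{2n}d^2+\frac{n-4}{2}d+1-g\bigr)$ are fine, but the \emph{only if} direction has two genuine gaps, and they are exactly the two points where the paper's proof does its real work. First, you never establish $d\in n\ZZ$; you explicitly defer it and the strategies you sketch do not work. There is no integrality obstruction on $\ch(I_{C/D})$ (that Chern character is realized by $I_{C/D}$ itself for every $d$), and invoking a ``Castelnuovo--Halphen-type bound'' ruling out the extremal genus for $d\notin n\ZZ$ is circular, since that exclusion is precisely part of what is being proved. The paper gets $d\in n\ZZ$ by a wall-crossing argument: Lemma \ref{torsion_no_wall} gives semistability of $I_{C/D}$ on $b=-\frac{d}{n}$, and if the unique numerical semicircle tangent to that line were \emph{not} an actual wall one could evaluate $Q$ at $b=-\frac{d}{n}-\epsilon$ and beat the extremal bound; hence the wall is actual, its rank-one destabilizer $A$ satisfies $\ch_{H,1}(A)=-\frac{d}{n}$ after combining the wall equation with $\Delta_H(A)\geq 0$, $\Delta_H(B)\geq 0$ and positivity of the imaginary parts, and $\ch_{H,1}(A)\in\ZZ$ forces $n\mid d$. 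None of this is replaceable by the soft considerations you propose.

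Second, even granting $d=nk$, your cohomological production of a section fails: the claimed vanishing $H^2(D,I_{C/D}(k))=0$ is false in general. By Serre duality $h^2(D,I_{C/D}(k))=\hom(I_{C/D}(k),\omega_D)$, and for $n\geq 4$ the expected answer $I_{C/D}(k)\cong\oh_D$ has $h^2=p_g(D)=h^0(\oh_D(n-4))>0$; more precisely the sequence $0\to I_{C/D}\to\oh_D\to\oh_C\to 0$ gives $h^2(I_{C/D}(k))=h^0(\oh_D(n-4-k))+h^1(\oh_C(k))$, and bounding $h^1(\oh_C(k))$ a priori is not easier than the original problem. So $\chi(I_{C/D}(k))=\chi(\oh_D)$ does not yield $h^0\geq 1$. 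The paper avoids this entirely: since $\Delta_H(A)=0$, Lemma \ref{delta_0} and Lemma \ref{bms lemma 2.7} make $A$ a 2-Gieseker-semistable sheaf with $\ch_{\leq 2}(A)=\ch_{\leq 2}(\oh_{\PP^3}(-k))$, and applying Conjecture \ref{SBG_conj} to \emph{both} $A$ and $B$ pins $\ch_3(A)$ down to that of $\oh_{\PP^3}(-k)$, whence $A\cong\oh_{\PP^3}(-k)$ and the nonzero map $A\to I_{C/D}$ coming from the wall is the desired section. You would need to supply both of these arguments for your proof to close.
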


\begin{proof}
By the adjunction formula, it is not hard to see that a curve $C\in |\oh_{D}(\frac{d}{n})|$ has the genus $\frac{1}{2n}d^2+\frac{n-4}{2}d+1$.

Conversely, we assume that $g=\frac{1}{2n}d^2+\frac{n-4}{2}d+1$.
Recall that
\[\ch_{H}(I_{C/D})=\big(0, n, -d-\frac{n^2}{2}, \frac{n^3}{6}+(g(C)+2d-1)\big).\]
Thus by Lemma \ref{torsion_no_wall}, there is no actual wall for $I_{C/D}$ on $b=-\frac{d}{n}$.
Note that by Proposition \ref{wall_prop}, if there is a numerical wall $W$ tangents to the line $b=-\frac{d}{n}$, it is given by
\[a^2+(b+\frac{d}{n}+\frac{n}{2})=\frac{n^2}{4}.\]
Then if $W$ is not an actual wall for $I_{C/D}$, by the local finiteness of walls, there is no actual wall for $I_{C/D}$ on $b=-\frac{d}{n}-\epsilon$ for $\epsilon>0$ sufficiently small. Applying Theorem \ref{SBG} to $I_{C/D}$ and $(a,b)=(0,-\frac{d}{n}-\epsilon)$, we obtain 
\[g(C)\leq \frac{2}{3n}d^2+\frac{b+2n-6}{3}d+\frac{b^2n+bn^2+6}{6}\]
\[<\frac{1}{2n}d^2+\frac{n-4}{2}d+1\] and get a contradiction. Therefore, $W$ is an actual wall.

Note that $(a,b)=(\frac{n}{2},-\frac{d}{n}-\frac{n}{2})\in W$, we can assume that the actual wall $W$ is given by an exact sequence
\[0\to A\to I_{C/D}\to B\to 0\]
in $\cA^{-\frac{d}{n}-\frac{n}{2}}$. Then by taking cohomology long exact sequence, we see $A\in \Coh(X)$ and thus $\ch_0(A)\geq 0$. By (ii) in Lemma \ref{rank_0}, we see $\ch_0(A)\leq 1$. Since $D$ is integral, the same argument as in Lemma \ref{torsion_no_wall} shows that $\ch_0(A)\neq 0$. Thus we have $\ch_0(A)=1$. 

Now we assume that
\[\ch_H(A)=\big(1,x,y,z\big), \quad \ch_H(B)=\big(-1,n-x,-d-\frac{n^2}{2}-y, \frac{n^3}{6}+(g(C)+2d-1)-z\big).\]
Using the definition and properties of walls in Section \ref{wall_sec2}, we see

\begin{enumerate}[(i)]
    \item $\mu_{\frac{n}{2},-\frac{d}{n}-\frac{n}{2}}(A)=\mu_{\frac{n}{2},-\frac{d}{n}-\frac{n}{2}}(I_{C/D})$ gives
    \[\frac{y+(\frac{d}{n}+\frac{n}{2})x+\frac{1}{2}((\frac{d}{n}+\frac{n}{2})^2-\frac{n^2}{4})}{x+\frac{d}{n}+\frac{n}{2}}=0.\]
    
    \item $\Delta_H(A)\geq 0$ gives  $x^2-2y\geq 0$.
    
    \item $\Delta_H(B)\geq 0$ gives $(n-x)^2-2(d+\frac{n^2}{2}+y)\geq 0$.
    
    \item $\Im\big(Z_{\frac{n}{2},-\frac{d}{n}-\frac{n}{2}}(A)\big)> 0$ and  $\Im\big(Z_{\frac{n}{2},-\frac{d}{n}-\frac{n}{2}}(B)\big)> 0$ gives $0<x+\frac{d}{n}+\frac{n}{2}<n$.
\end{enumerate}
From (i), we know
\begin{equation} \label{eq_1}
    y=-(\frac{d^2}{2n^2}+\frac{n+2x}{2n}d+\frac{nx}{2})=-\big((\frac{d}{n}+\frac{n}{2})x+\frac{d^2}{2n^2}+\frac{d}{2}\big).
\end{equation}
Then by (ii) and \eqref{eq_1}, we obtain
\begin{equation} \label{eq2}
    x^2+(\frac{2d}{n}+n)x+\frac{d^2}{n^2}+d=(x+\frac{d}{n})(x+\frac{d}{n}+n)\geq 0.
\end{equation}
From (iii) and \eqref{eq_1}, we get
\begin{equation} \label{eq3}
    x^2+(\frac{2d}{n}-n)x+\frac{d^2}{n^2}-d=(x+\frac{d}{n})(x+\frac{d}{n}-n)\geq 0.
\end{equation}
Now using (iv), \eqref{eq2} and \eqref{eq3}, we have
\[x=-\frac{d}{n},\quad y=\frac{d^2}{2n^2}.\]
Since $x\in \ZZ$, we know that $d\in n\ZZ$ and
\[\ch_{\leq 2}(A)=\ch_{\leq 2}\big(\oh_{\PP^3}(-\frac{d}{n})\big).\]

Now we are going to show $C\in |\oh_D(\frac{d}{n})|$. We only need to prove \[\Hom_{\D^b(\PP^3)}\big(\oh_{\PP^3}(-\frac{d}{n}\big), I_{C/D})=\mathrm{H}^0\big(D, I_{C/D}(\frac{d}{n})\big)\neq 0.\]
Since $\Delta_H(A)=0$, $A$ is $\sigma_{a, -\frac{d}{n}-\frac{n}{2}}$-semistable for any $a>0$ by Lemma \ref{delta_0}. Thus from Lemma \ref{bms lemma 2.7}, $A$ is a 2-Gieseker-semistable sheaf on $\PP^3$. Now applying Theorem \ref{SBG}, we know
\[Q_{\frac{n}{2},-\frac{d}{n}-\frac{n}{2}}(A)\geq 0, \quad Q_{\frac{n}{2},-\frac{d}{n}-\frac{n}{2}}(B)\geq 0.\]
From $Q_{\frac{n}{2},-\frac{d}{n}-\frac{n}{2}}(A)\geq 0$ we see $z\leq -\frac{d^3}{6n^3}$ and from $Q_{\frac{n}{2},-\frac{d}{n}-\frac{n}{2}}(B)\geq 0$ we see $z\geq -\frac{d^3}{6n^3}$.
Then we have $\ch_3(A)=\ch_3(\oh_{\PP^3}(-\frac{d}{n}))$. Thus $\ch(A)=\ch(\oh_{\PP^3}(-\frac{d}{n}))$ and we obtain $A\cong \oh_{\PP^3}(-\frac{d}{n})$. Therefore, we get
\[\Hom_{\D^b(\PP^3)}\big(A, I_{C/D}\big)=\Hom_{\D^b(\PP^3)}\big(\oh_{\PP^3}(-\frac{d}{n}), I_{C/D}\big)\neq 0\]
as desired and the result follows. 
\end{proof}

\begin{theorem} \label{20_51}
Let $X\subset \PP^4$ be a smooth hypersurface of degree $n\leq 5$. Let $C$ be a one-dimensional closed subscheme $C\subset X$ of degree $d\geq n+1$ and genus $g$. Then
\[g=\frac{1}{2n}d^2+\frac{n-4}{2}d+1\]
if and only if $C$ is a complete intersection of a hyperplane section and a degree $\frac{d}{n}$  hypersurface section of $X$.
\end{theorem}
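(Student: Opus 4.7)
The plan is to deduce the theorem by combining the two main preparatory results already proved in the excerpt: Lemma \ref{extremal_curve}, which locates any extremal $C$ inside a hyperplane section of $X$, and Proposition \ref{prop_D}, which classifies extremal one-dimensional subschemes of an integral surface in $\PP^3$. The wall-crossing hard work has been done in those two statements; what remains is essentially bookkeeping.

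For the forward direction, assume $g(C) = \tfrac{1}{2n}d^2 + \tfrac{n-4}{2}d + 1$ with $d \geq n+1$. By Lemma \ref{extremal_curve}, $C$ is a curve contained in a unique hyperplane section $D \in |\oh_X(1)|$ of $X$. Since $X$ is a smooth hypersurface in $\PP^4$ of degree $\leq 5$ with $\Pic(X) = \ZZ \cdot H$, every hyperplane section $D$ is integral: a reducible or non-reduced decomposition of $D$ would produce a non-trivial class in $\Pic(X)$ incompatible with Picard rank one. Viewing $D$ as an integral surface of degree $n$ in $\PP^3$, Proposition \ref{prop_D} applied to $C \subset D$ forces $d \in n\ZZ$ and $C \in |\oh_D(d/n)|$.

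To upgrade the section cutting out $C$ in $D$ to a hypersurface in $\PP^4$, I would use the short exact sequence $0 \to \oh_{\PP^3}(d/n - n) \to \oh_{\PP^3}(d/n) \to \oh_D(d/n) \to 0$ associated with the integral divisor $D$; since $H^1(\PP^3, \oh_{\PP^3}(d/n - n)) = 0$ by Bott vanishing, the restriction $H^0(\PP^3, \oh_{\PP^3}(d/n)) \twoheadrightarrow H^0(D, \oh_D(d/n))$ is surjective, so the section of $\oh_D(d/n)$ cutting out $C$ lifts to a degree $d/n$ polynomial on $\PP^3$, and trivially extends to one on $\PP^4$. Call the resulting hypersurface $Y \subset \PP^4$; then $C = X \cap H_1 \cap Y$, where $H_1 \subset \PP^4$ is the hyperplane with $X \cap H_1 = D$, which is exactly the asserted complete intersection form.

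For the converse, suppose $C$ is such a complete intersection, i.e.\ $C \in |\oh_D(d/n)|$ for $D = X \cap H_1$ a hyperplane section and $d/n$ the degree of the other cutting hypersurface. Since $D \subset \PP^3$ is a degree $n$ surface with $\omega_D \cong \oh_D(n-4)$, adjunction gives $\omega_C \cong \omega_D \otimes \oh_D(d/n)|_C \cong \oh_C(n + d/n - 4)$, so $2g(C) - 2 = d \bigl( n + \tfrac{d}{n} - 4 \bigr) = \tfrac{d^2}{n} + (n-4)d$, which rearranges to the required genus formula.

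The only mildly non-routine step is the lifting argument, and it is immediate from Bott vanishing once $D$ is known to be integral, so there is no real obstacle; the entire novelty of the theorem is already absorbed into Lemma \ref{extremal_curve} (via the genus bound of Theorem \ref{bound_quintic}) and Proposition \ref{prop_D} (via the analysis of the tangent wall $W$ to the line $b = -d/n$ for the torsion sheaf $I_{C/D}$).
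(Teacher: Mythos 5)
Your proof is correct and follows essentially the same route as the paper: Lemma \ref{extremal_curve} places $C$ in a unique integral hyperplane section $D$, Proposition \ref{prop_D} then forces $d\in n\ZZ$ and $C\in|\oh_D(d/n)|$, and the converse is the adjunction computation already noted in Proposition \ref{prop_D}. Your explicit lifting of the section via $H^1(\PP^3,\oh_{\PP^3}(d/n-n))=0$ is a standard detail the paper leaves implicit, and it is correct.
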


\begin{proof}
By Lemma \ref{extremal_curve}, $C$ is a curve contained in a unique hyperplane section $D\subset X$. Since $X$ has Picard rank one, we know that $D\subset \PP^3$ is integral. Then the result follows from Proposition \ref{prop_D}.
\end{proof}

\begin{remark} \label{main_rmk}
Although our proof relies on Theorem \ref{SBG}, we only used a much weaker version of it. We hope to generalize Theorem \ref{bound_quintic} and \ref{20_51} to all complete intersection 3-folds in future research. A work regarding Castelnuovo-type bound and its improvement on complete intersection Calabi--Yau 3-folds is in preparation \cite{F2022}.
\end{remark}

\begin{remark}\label{char_p}
The results in Section \ref{sec_4.1} and \ref{extremal} remain valid without any change when we replace the base field $\mathbb{C}$ with an algebraically closed field $\mathbb{K}$ of \emph{any characteristic}, except those for quintic 3-folds. Indeed, our proof only relies on

\begin{itemize}
    \item the classical Bogomolov--Gieseker (BG) inequality \eqref{BG}, which implies Proposition \ref{tilt_stab}; and
    
    \item Bayer--Macr\'i--Toda's generalized BG inequality (Theorem \ref{SBG}).
\end{itemize}

For Fano or Calabi--Yau varieties over $\mathbb{K}$, the classical BG inequality was proved by \cite{lang04}. And the argument in \cite{li2018stability} also works for smooth hypersurfaces in $\PP^4_{\mathbb{K}}$ of degree $\leq 4$, which shows that Theorem \ref{SBG} holds in these cases. To prove the generalized BG inequality for quintic 3-folds over $\mathbb{K}$ via the same approach in \cite{liquintic}, we need the classical BG inequality for $(2,5)$-complete intersection surfaces over $\mathbb{K}$. As Naoki Koseki pointed out to us, this can probably be worked out by using the same techniques in \cite{koseki2020bogomolov}.
\end{remark}

\section{Vanishing of Gopakumar--Vafa invariants} \label{sec_vanish_GV}

The main goal of this section is to prove Theorem \ref{main_thm_1.1}.

\subsection{Pandharipande--Thomas invariants and  Gopakumar--Vafa invariants}

%The Pandharipande--Thomas invariants of $X$ is defined in \cite{PT07}, counting some special objects in $\D^b(X)$ associated to curves. 
Let $(X,H)$ be a polarised smooth projective 3-fold. Recall that a two-term complex of coherent sheaves on $X$
\[[\oh_X\xra{s} F]\]
is called a \emph{stable pair} if $F$ is pure one-dimensional and the cokernel of $s$ is zero-dimensional.

Let $C$ be the scheme-theoretic support of $F$. Let $H_2(X, \mathbb{Z})_{c}$ be the subset of $H_2(X, \mathbb{Z})$ consists of curve classes. For any integer $n$ and curve class $\beta\in H_2(X, \mathbb{Z})_c$, let $P_n(X, \beta)$ be the moduli space of stable pairs $[\oh_X\xra{s} F]$
on $X$ with
\[\chi(F)=n,\quad[C]=\beta.\]
It is shown in \cite{PT07} that the moduli space $P_n(X, \beta)$ is a projective scheme with a virtual cycle $[P_n(X, \beta)]^{vir}$ of virtual dimension
\[\int_{\beta} c_1(X).\]
In particular, if $X$ is Calabi--Yau, the virtual cycle is zero-dimensional and we define a \emph{Pandharipande--Thomas (PT) invariant} by
\[P_{n,\beta}:=\int_{[P_n(X, \beta)]^{vir}} 1.\]

%\zy{Need to write: definition of PT invariants}

The generating series of PT-invariants is
\[\PT(q, t):=1+\sum_{\beta\neq 0}\sum_{n\in \ZZ} P_{n, \beta} q^n t^{\beta}.\]
Note that when $\beta$ is not a curve class, we have $P_{n, \beta}=0$ for any $n$.
%As an immediate corollary of Theorem \ref{main_thm}, we have:

\begin{corollary} \label{PT_zero}
Let $d\geq 1$ be an integer, and $B(-)\colon \mathbb{Z}_{>0}\to \mathbb{R}$ be a function such that $g(C)\leq B(d(C))$ for any curve $C\subset X$. Let $\beta\in H_2(X, \mathbb{Z})_{c}$ such that $\deg_H \beta=d$. Then the moduli space $P_n(X,\beta)=\varnothing$ for any $n<1-B(d)$.

In particular, if $X$ is Calabi--Yau, then we have
\[P_{n, \beta}=0\]
for any $n<1-B(d)$.
\end{corollary}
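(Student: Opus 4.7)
The plan is to reduce the statement to the genus bound $B$ by analyzing the structure of an arbitrary stable pair in $P_n(X,\beta)$. So take any $\mathbb{C}$-point of $P_n(X,\beta)$, represented by a stable pair $[\mathcal{O}_X \xrightarrow{s} F]$ with $\chi(F) = n$ and $[\mathrm{supp}(F)] = \beta$. Let $C$ be the scheme-theoretic support of $F$. Since $F$ is pure one-dimensional, $\mathcal{O}_C$ is also pure, so $C$ is Cohen--Macaulay, i.e.\ a curve in the sense of the paper's conventions. By the hypothesis on $\beta$, we have $d(C) = \deg_H[C] = \deg_H \beta = d$.

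Next I would extract the key short exact sequence. The section $s$ factors as $\mathcal{O}_X \twoheadrightarrow \mathcal{O}_C \hookrightarrow F$: surjectivity onto $\mathcal{O}_C$ holds because the image of $s$ is supported on $C$ and is a subsheaf of the pure sheaf $F$, while injectivity $\mathcal{O}_C \hookrightarrow F$ follows from purity (the image cannot have an embedded component). Let $Q := F/\mathcal{O}_C$; by the stable pair condition the cokernel of $s$ is zero-dimensional, and since $\mathrm{coker}(s) = Q$, we obtain a short exact sequence
\[
0 \longrightarrow \mathcal{O}_C \longrightarrow F \longrightarrow Q \longrightarrow 0
\]
with $Q$ a zero-dimensional sheaf, so $\chi(Q) = \dim_{\mathbb{C}} H^0(X,Q) \geq 0$.

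Now I would take Euler characteristics and invoke the genus bound. Using $\chi(\mathcal{O}_C) = 1 - g(C)$,
\[
n = \chi(F) = \chi(\mathcal{O}_C) + \chi(Q) = 1 - g(C) + \chi(Q) \geq 1 - g(C) \geq 1 - B(d(C)) = 1 - B(d).
\]
Thus any point of $P_n(X,\beta)$ forces $n \geq 1 - B(d)$, so $P_n(X,\beta) = \varnothing$ whenever $n < 1 - B(d)$, proving the first assertion. For the ``in particular'' part, when $X$ is Calabi--Yau the virtual cycle $[P_n(X,\beta)]^{\mathrm{vir}}$ lives on an empty scheme and hence is zero, so $P_{n,\beta} = \int_{[P_n(X,\beta)]^{\mathrm{vir}}} 1 = 0$.

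There is no real obstacle here: the only nontrivial input is the standard fact that in a stable pair the image of $s$ is exactly the structure sheaf of the (Cohen--Macaulay) scheme-theoretic support, which is built into the PT formalism. The whole argument is a one-line Euler characteristic estimate once this structural decomposition is in hand.
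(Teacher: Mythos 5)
Your argument is correct and is essentially the same as the paper's: both identify the image of $s$ as $\oh_C$ for the scheme-theoretic support curve $C$, use that the zero-dimensional cokernel has nonnegative Euler characteristic to get $n=\chi(F)\geq \chi(\oh_C)=1-g(C)$, and then invoke the genus bound $g(C)\leq B(d)$ to force emptiness. The only difference is presentational — you spell out the short exact sequence $0\to\oh_C\to F\to Q\to 0$ where the paper just writes the inequality $\chi(\oh_C)\leq\chi(F)$ directly.
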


\begin{proof}
Let $[\oh_X\xra{s} F]\in P_n(X, \beta)$ be a stable pair. Then the image of $s$ is of the form $\oh_C$, where $C\subset X$ is the scheme-theoretic support of $F$. From the fact that $F$ is pure, $\oh_C$ is also pure. Hence $C$ is a curve.

Since the cokernel of $s$ is zero-dimensional, we have $\chi(\oh_C)\leq \chi(F)=n$. Then from $n<1-B(d)$, we see $g(C)>B(d)$ and such curve $C$ cannot exist by assumption. This means the moduli space of stable pairs $P_n(X, \beta)$ is empty for any $n<1-B(d)$. If $X$ is Calabi--Yau, by definition we get $P_{n, \beta}=0$ for any $n<1-B(d)$.
\end{proof}

%Following \cite{PT07}, we define the \emph{Gopakumar--Vafa invariants} $n_g^d$ from the coefficients of expanding of connected Pandharipande--Thomas generating series:
%\[\sF_{P}(q, t):=\log \PT(q, t)=\sum_{g>-\infty} \sum_{d>0} \sum_{k\geq 1} n_g^d \frac{(-1)^{g-1}}{k} ((-q)^{\frac{k}{2}}-(-q)^{-\frac{k}{2}})^{2g-2} t^{kd}.\]

%\[\PT(q, t)=\prod_{d>0}\big(\prod_{j\geq 1} (1-(-q)^jt^d)^{jn^d_0}\cdot \prod_{g\geq 1} \prod_{k=0}^{2g-2}(1-(-q)^{g-1-k}t^d)^{ n_g^d {2g-2 \choose k} (-1)^{k+g}} \big),\]

%or

\subsection{Vanishing}

Let $(X, H)$ be a polarised smooth Calabi--Yau 3-fold. 
Recall that for any non-zero effective 1-cycle $\beta\in H_2(X,\ZZ)$ and $g\geq 0$, the \emph{Gopakumar--Vafa (GV) invariant} $n_g^{\beta}$ is defined as the coefficient in an expansion of the series
\begin{equation} \label{GV_from_GW}
    \GW(\lambda, t)=\sum_{g\geq 0, \beta\neq 0} N_{g,\beta}\lambda^{2g-2}t^{\beta}=\sum_{g\geq 0,r\geq 1, \beta\neq 0}  \frac{n_g^{\beta}}{r}\cdot \big(2\sin(\frac{r\lambda}{2})\big)^{2g-2}\cdot t^{r\beta}.
\end{equation}

As in \cite[Section 3]{PT07}, we can write the connected PT series $\sF_{P}(q, t)$ uniquely as
\begin{equation} \label{GV_PT_1030}
    \sF_{P}(q, t):=\log \PT(q, t)=\sum_{g>-\infty} \sum_{\beta\neq 0} \sum_{r\geq 1} \overline{n}_g^{\beta} \frac{(-1)^{g-1}}{r} \big((-q)^{\frac{r}{2}}-(-q)^{-\frac{r}{2}}\big)^{2g-2} t^{r\beta},
\end{equation}
such that $\overline{n}_g^{\beta}=0$ for a fixed $\beta$ and $g\gg 0$.

Now we assume that the GW/PT correspondence \cite{PP12} holds for $X$ (e.g.~$X$ is a complete intersection Calabi--Yau 3-fold \cite[Corollary 2]{PP12}). After changing the variable $q=-\exp(i\lambda)$, the parts of series \eqref{GV_from_GW} and \eqref{GV_PT_1030} containing $t^{\beta}$ are equal \emph{as rational functions} in $q$ for any $\beta\neq 0$. Now by \cite{DIW21}, we know that $n^{\beta}_g=0$ for any fixed $\beta$ and $g\gg 0$. Thus from the uniqueness of the expansions \cite[Lemma 3.9, 3.11]{PT07}, we have
\[\overline{n}_g^{\beta}=n_g^{\beta},\]
or in other words,
\begin{equation} \label{GV_PT_5}
    \sF_{P}(q, t)=\log \PT(q, t)=\sum_{g\geq 0} \sum_{\beta\neq 0} \sum_{r\geq 1} n_g^{\beta} \frac{(-1)^{g-1}}{r} \big((-q)^{\frac{r}{2}}-(-q)^{-\frac{r}{2}}\big)^{2g-2} t^{r\beta}.
\end{equation}
Note that
\begin{equation}\label{expand}
    \big((-q)^{\frac{r}{2}}-(-q)^{-\frac{r}{2}}\big)^{2g-2}=\big( \frac{(-q)^r}{(1-(-q)^r)^2} \big)^{1-g} =(-q)^{r(1-g)}+2(1-g)(-q)^{r(2-g)}+O(q^{r(2-g)}).
\end{equation}

Using PT-invariants and formula \eqref{GV_PT_5}, we can prove our vanishing result of GV-invariants Theorem \ref{main_thm_1.1}.
%, or more generally Proposition \ref{BPS_zero}. 
%We begin with a lemma.

First, we need a vanishing result of \emph{connected} PT-invariants.

\begin{proposition} \label{logPT_zero}
Let $d\geq 1$ be an integer, and $B(-)\colon \mathbb{Z}_{>0}\to \mathbb{R}_{\geq 0}$ be a function such that $g(C)\leq B\big(d(C)\big)$ for any curve $C\subset X$. Assume that 
\begin{equation} \label{bd_2}
    B(d)-1\geq \sum_{i=1}^s \big(B(d_i)-1\big)
\end{equation}
for any integer $s\geq 1$ and $d_i\geq 1$ with $\sum^s_{i=1} d_i=d$.
Then the coefficient of $q^mt^{\beta}$ in $\sF_P(q, t)$ is zero for any effective 1-cycle $\beta\neq 0$ with $\deg_H \beta=d$ and $m<1-B(d)$.

\end{proposition}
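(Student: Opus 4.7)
The plan is to expand the logarithm defining $\sF_P$ and reduce to the vanishing of ordinary PT-invariants from Corollary \ref{PT_zero}, using the subadditivity hypothesis \eqref{bd_2} to combine contributions from splittings of $\beta$.

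More concretely, I would write
\[
\sF_P(q,t) \= \log \PT(q,t) \= \sum_{s\geq 1} \frac{(-1)^{s-1}}{s}\bigl(\PT(q,t)-1\bigr)^s,
\]
and observe that $\PT(q,t)-1 = \sum_{\beta\neq 0}\sum_{n\in\ZZ} P_{n,\beta} q^n t^\beta$. Expanding the $s$-th power, the coefficient of $q^m t^\beta$ in $\sF_P(q,t)$ is a finite $\QQ$-linear combination of products
\[
P_{n_1,\beta_1}\cdot P_{n_2,\beta_2}\cdots P_{n_s,\beta_s}
\]
where $s\geq 1$, each $\beta_i$ is a nonzero effective class with $\beta_1+\cdots+\beta_s=\beta$, and $n_1+\cdots+n_s=m$. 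Setting $d_i:=\deg_H \beta_i\in\ZZ_{\geq 1}$, we have $\sum_i d_i = d$.

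The key step is to show each such product vanishes when $m<1-B(d)$. By Corollary \ref{PT_zero}, the factor $P_{n_i,\beta_i}$ vanishes whenever $n_i<1-B(d_i)$; hence a nonzero product forces
\[
m \= \sum_{i=1}^{s} n_i \;\geq\; \sum_{i=1}^{s}\bigl(1-B(d_i)\bigr) \= s - \sum_{i=1}^{s} B(d_i).
\]
The subadditivity hypothesis \eqref{bd_2} applied to the decomposition $d=\sum d_i$ gives $\sum_{i=1}^{s} B(d_i) - s \leq B(d)-1$, and therefore
\[
m \;\geq\; s - \sum_{i=1}^{s} B(d_i) \;\geq\; 1 - B(d),
\]
contradicting $m<1-B(d)$. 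Hence every term in the expansion vanishes and the coefficient of $q^m t^\beta$ in $\sF_P(q,t)$ is zero.

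The argument is essentially formal once the two ingredients are in place; there is no serious obstacle. The only minor point to be careful about is that effective decompositions $\beta=\beta_1+\cdots+\beta_s$ with each $\beta_i\neq 0$ produce finitely many contributions at a fixed $(m,\beta)$, which is automatic because the $d_i$ are positive integers summing to $d$ and each $n_i$ is bounded below by $1-B(d_i)$ and above by $m-\sum_{j\neq i}(1-B(d_j))$, so the logarithmic expansion makes sense coefficient-wise with no convergence issues to address.
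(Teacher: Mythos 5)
Your proof is correct and follows essentially the same route as the paper: expand $\log\PT$ as a power series in $\PT-1$, identify the coefficient of $q^m t^\beta$ as a $\QQ$-linear combination of products $\prod_i P_{n_i,\beta_i}$, and use Corollary \ref{PT_zero} together with the subadditivity hypothesis \eqref{bd_2} to show each product vanishes. Your extra remark on the coefficient-wise finiteness of the expansion is a welcome, if minor, point of care that the paper leaves implicit.
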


\begin{proof}
By definition, we obtain
\[\sF_P(q, t)=\log \PT(q, t)=\sum_{k=1}^{\infty} \frac{(-1)^{k-1} \PT^{'k}(q, t)}{k},\]
where $$\PT^{'}(q, t):=\PT(q, t)-1=\sum_{\beta\neq 0}\sum_{n\in \ZZ} P_{n, \beta} q^n t^{\beta}=\sum_{\beta \in H_2(X, \mathbb{Z})_c}\sum_{n\in \ZZ} P_{n, \beta} q^n t^{\beta}.$$ 
Thus the coefficient of $q^mt^{\beta}$ in $\sF_P(q, t)$ is a $\QQ$-linear combination of integers in the set
\[A_{m,\beta}:=\{\prod_{i=1}^s P_{n_i, \beta_i} \mid s\in \mathbb{Z}_{\geq 1}, n_i\in \ZZ, \beta_i\in H_2(X, \mathbb{Z})_c, n_1+...+n_s=m ~\text{and}~ \beta_1+...+\beta_s=\beta\}.\]

Let $d_i:=\deg_H \beta_i$. By assumptions, we have $n_1+...+n_s=m< 1-B(d)$ and $d_1+...+d_s=d$. From Corollary \ref{PT_zero}, we can assume that $n_i\geq 1-B(d_i)$ for any $i$, otherwise $\prod_{i=1}^s P_{n_i, \beta_i}\in A_{m,\beta}$ is zero and has no contribution to the coefficient. Then we obtain
\[\sum^s_{i=1} \big(B(d_i)-1\big)\geq -(n_1+...+n_s)=-m>B(d)-1,\]
which contradicts \eqref{bd_2}. Thus $A_{m,\beta}=\{0\}$ and the result follows.
\end{proof}

Now we are ready to prove the vanishing of GV-invariants, under some assumptions on $B(d)$.

\begin{proposition}  \label{BPS_zero}
Let $B(-)\colon \mathbb{Z}_{>0}\to \mathbb{R}_{\geq 0}$ be a function such that $g(C)\leq B\big(d(C)\big)$ for any curve $C\subset X$. Assume that 
\begin{equation} \label{5.3_bd}
    B(\sum^s_{i=1} d_i)-1\geq \sum_{i=1}^s \big(B(d_i)-1\big)
\end{equation}
for any integer $s\geq 1$ and $d_i\geq 1$. Moreover, assume that
\begin{equation} \label{bd_1}
    \frac{B(d)-1}{r}+1\geq B(\frac{d}{r})
\end{equation}
holds for any integer $r\geq 1$ and $d\geq 1$.
Then for any effective 1-cycle $\beta\neq 0$,  we have $n_{g}^{\beta}=0$ whenever $g>B(\deg_H \beta)$.
\end{proposition}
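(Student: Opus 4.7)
The plan is to compute the coefficient of $q^{1-g} t^{\beta}$ in $\sF_P(q,t)$ in two different ways and compare. On the one hand, since hypothesis \eqref{5.3_bd} is precisely the assumption \eqref{bd_2} of Proposition \ref{logPT_zero}, that proposition tells us the coefficient vanishes whenever $1 - g < 1 - B(d)$, which is equivalent to $g > B(d)$. On the other hand, using the GV-expansion \eqref{GV_PT_5} together with the Laurent expansion \eqref{expand}, this same coefficient can be expressed in terms of GV-invariants $n_{g'}^{\beta'}$ for various triples $(g', r, \beta')$ with $r\beta' = \beta$, and the goal is to isolate $n_g^{\beta}$.

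To carry this out, I will induct on $d = \deg_H \beta$ and, for each fixed $\beta$, perform a downward induction on $g$; the latter terminates because $n_{g'}^{\beta} = 0$ for $g' \gg 0$ by \cite{DIW21}. According to \eqref{expand}, the Laurent expansion of $((-q)^{r/2} - (-q)^{-r/2})^{2g'-2}$ has minimal $q$-power equal to $q^{r(1-g')}$ with leading coefficient $(-1)^{r(1-g')}$, so a triple $(g', r, \beta')$ can contribute to the $q^{1-g}$-coefficient only when $g' - 1 \geq (g-1)/r$. For $r = 1$ (so $\beta' = \beta$), this forces $g' \geq g$; the downward-induction hypothesis eliminates all $g' > g$, leaving only $g' = g$, whose contribution is $n_g^{\beta} \cdot (-1)^{g-1} \cdot (-1)^{1-g} = n_g^{\beta}$. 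For $r \geq 2$, we have $\deg_H \beta' = d/r < d$, so the induction on degree gives $n_{g'}^{\beta'} = 0$ whenever $g' > B(d/r)$; but any contributing $g'$ satisfies $g' - 1 \geq (g-1)/r > (B(d)-1)/r \geq B(d/r) - 1$, where the last inequality is exactly hypothesis \eqref{bd_1}, so no such $g'$ survives. Equating the two computations of the coefficient yields $n_g^{\beta} = 0$, completing the induction.

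The main obstacle is bookkeeping rather than conceptual: carefully tracking signs and the order of the leading term in the Laurent expansion \eqref{expand}, and organizing the double induction so that the two distinct hypotheses on $B$ are invoked at the correct steps—\eqref{5.3_bd} to apply Proposition \ref{logPT_zero} for the first evaluation, and \eqref{bd_1} to rule out the multicover ($r \geq 2$) contributions. The conceptual content is already captured by Proposition \ref{logPT_zero} and the GV/PT identity \eqref{GV_PT_5}, so no further geometric input is required.
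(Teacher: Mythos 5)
Your proposal is correct and follows essentially the same route as the paper: both arguments extract the coefficient of $q^{1-g}t^{\beta}$ from \eqref{GV_PT_5}, use \eqref{bd_1} together with induction on $\deg_H\beta$ to kill the $r\geq 2$ multicover contributions, isolate $n_g^{\beta}$ as the only surviving term, and conclude by the vanishing from Proposition \ref{logPT_zero}. The paper packages your downward induction on $g$ as ``take $G$ maximal with $n_G^{\beta}\neq 0$ (via \cite[Lemma 3.12]{PT07}) and derive a contradiction from $G>B(d)$,'' which is the same argument in contrapositive form.
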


\begin{proof}
We prove our proposition by induction on $\deg_H \beta$. When $\deg_H \beta=1$, since in this case $\beta$ is an irreducible class, the result follows from \cite[Theorem 5]{PT10}. 

Assume that $d>1$ and the statement holds for all non-zero effective 1-cycles $\beta'\in H_2(X, \mathbb{Z})$ with $\deg_H \beta'<d$. Now we need to deal with the case $d=\deg_H \beta$. By \cite[Lemma 3.12]{PT07}, we can find an integer $G$ such that $n_G^{\beta}\neq 0$ and  $n_g^{\beta}=0$ for every $g>G$. Thus we only need to prove that $G\leq B(d)$.

Assume otherwise $G>B(d)$, thus $G\geq 1$. By \eqref{GV_PT_5}, the coefficient of $(-q)^{1-G}t^{\beta}$ in $\sF_P(q, t)$ is given by
\[\sum_{g\geq 1, r\in \ZZ_{\geq1}, rl=1-G~ \text{for integers} ~ 1-g\leq l~ \text{and} ~\frac{\beta}{r}\in H_2(X, \ZZ)} n_g^{\frac{\beta}{r}} \frac{(-1)^{g-1}}{r} \binom{2g-2}{l+g-1}.\]
By $1-G\leq 0$ and \eqref{expand}, the genus zero GV-invariants  $n_0^{\frac{\beta}{r}}$ have no contribution to the coefficient of $(-q)^{1-G}t^{\beta}$. Thus the coefficient of $q^{1-G}t^{\beta}$ in $\sF_P(q, t)$ is
\[S_{1-G, \beta}:=\sum_{g\geq 1, r\in \ZZ_{\geq1}, rl=1-G~ \text{for integers} ~ 1-g\leq l~ \text{and} ~\frac{\beta}{r}\in H_2(X, \ZZ)} n_g^{\frac{\beta}{r}} \frac{(-1)^{g-G}}{r} \binom{2g-2}{l+g-1}.\]
%\[S_d:=\{n_g^{\frac{D}{r}} ~|~  g\geq 0, rl=G-1~ \text{for  some integers} ~ l\leq g-1~ \text{and} ~r|D\}.\]
%\[\sum_{r|D, g>0, rl=G-1~ for ~some ~ l\leq g-1} n_g^{\frac{D}{r}} \frac{(-1)^{g-1}}{r} C^{2l}_{2g-2}.\]
By assumption, we have $n_g^{\beta}=0$ for any $g>G$. Thus
\[S_{1-G, \beta}=n^{\beta}_G +\sum_{g\geq 1, rl=1-G~ \text{for integers} ~ 1-g\leq l~ \text{and}~r\geq 2~ \text{with} ~\frac{\beta}{r}\in H_2(X, \ZZ)} n_g^{\frac{\beta}{r}} \frac{(-1)^{g-G}}{r} \binom{2g-2}{l+g-1}.\]
%\[S_d=\{0, n_G^D\}\cup \{n_g^{\frac{D}{r}} ~|~  g\geq 0, rl=G-1~ \text{for  some integers} ~ l\leq g-1~ \text{and} ~r|D, r\geq 2\}.\]

Now we claim that $g>B(\frac{d}{r})$ for any integers $g$ and $r$ satisfying
\[g\geq 1, rl=1-G~ \text{for integers} ~ 1-g\leq l~ \text{and}~ r\geq 2~\text{with} ~\frac{d}{r}\in \ZZ.\]
Indeed, we have
\[g\geq 1-l=\frac{G-1}{r}+1> \frac{B(d)-1}{r}+1\]
by our assumption that $G>B(d)$. Then by \eqref{bd_1}, we get
\[g>B(\frac{d}{r}).\]
Hence from the induction hypothesis, we obtain $S_{1-G, \beta}=n^{\beta}_G$. Thus by our assumption $n^{\beta}_G\neq 0$, we see $S_{1-G, \beta}\neq 0$. But from Proposition \ref{logPT_zero}, we have $S_{1-G, \beta}=0$, which makes a contradiction. This means $G\leq B(d)$ as desired and finishes our proof.
\end{proof}

As an application, we can prove one of our main theorems.

\begin{proof}[Proof of Theorem \ref{main_thm_1.1}] We define 
\[B(d):=\frac{d^2+5d+10}{10}.\]
It is not hard to check that \eqref{5.3_bd} and \eqref{bd_1} hold for  $B(-)$. Then the result follows from Theorem \ref{bound_quintic} and Proposition \ref{BPS_zero}.
\end{proof}

\section{Non-vanishing of Gopakumar--Vafa invariants} \label{sec_nonvanish}

\subsection{Moduli spaces}
In this subsection, we fix $X\subset \PP^4$ to be a smooth hypersurface of degree $n\leq 5$.
Using Theorem \ref{20_51}, we can describe some moduli spaces explicitly.

For $k>0$ and any closed subscheme $S\subset \PP^4$, let $\Hilb_{S}^{P_{\oh_S(k)}}$ be the Hilbert scheme of closed subschemes of $S$ with the same Hilbert polynomial as effective divisors in $|\oh_S(k)|$. For example, we have
\[\Hilb^{P_{\oh_X(k)}}_{X}\cong \PP\mathrm{H}^0\big(X, \oh_{X}(k)\big).\]
For any two polynomials $P,Q \in \QQ[t]$, we denote the flag Hilbert scheme by $\FHilb^{P, Q}_X$. It parameterizes pairs of closed subschemes $[Z\subset Y]$, where $[Z]\in \Hilb^P_X$ and $[Y]\in \Hilb^{Q}_X$. For simplicity, we denote
\[\cM_d:=\Hilb_X^{dt-\frac{1}{2n}d^2-\frac{n-4}{2}d}.\]
%\[\cM_d:=I_{-\frac{1}{2n}d^2-\frac{n-4}{2}d}(X, d)=\Hilb_X^{dt-\frac{1}{2n}d^2-\frac{n-4}{2}d}.\]

%\zy{focus on d=20}

\begin{lemma} \label{moduli}
Let $X\subset \PP^4$ be a smooth hypersurface of degree $n\geq 1$. For any $m\geq 1$ and hyperplane section $D\subset X$, we have
\[\Hilb^{P_{\oh_D(m)}}_D\cong \PP\mathrm{H}^0\big(D, \oh_D(m)\big).\]
\end{lemma}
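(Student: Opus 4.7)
This lemma is essentially a Hilbert-scheme upgrade of Proposition \ref{prop_D}, so the first step is to bring ourselves into the setting of that proposition. Since $X$ is a smooth hypersurface in $\PP^4$, the Lefschetz hyperplane theorem gives $\Pic(X) = \ZZ\cdot H$. Writing the Weil divisor associated to $D$ as $D = \sum m_i D_i$ with $D_i \sim k_i H$ and $m_i,k_i \geq 1$, the relation $D \sim H$ forces $\sum m_i k_i = 1$, so $D$ is integral. Choosing the hyperplane $\PP^3\subset \PP^4$ that cuts out $D$, we see $D$ is an integral surface in $\PP^3$ of degree $n$, to which Proposition \ref{prop_D} applies.

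Next I would produce the natural forward morphism $\phi\colon \PP\mathrm{H}^0(D,\oh_D(m)) \to \Hilb^{P_{\oh_D(m)}}_D$ induced by the tautological section on $\PP\mathrm{H}^0(D,\oh_D(m)) \times D$, whose vanishing locus is a flat family of effective divisors in $|\oh_D(m)|$. Applying Proposition \ref{prop_D} to the fiber of the universal family over each closed point $[C]\in \Hilb^{P_{\oh_D(m)}}_D$ (of degree $d=nm$ and genus $g=\tfrac{1}{2n}d^2+\tfrac{n-4}{2}d+1$) shows $C \in |\oh_D(m)|$, so $\phi$ is bijective on closed points.

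To upgrade this to a scheme-theoretic isomorphism, I would construct the inverse. Let $\mathcal{I}\sse \oh_{\Hilb\times D}$ be the universal ideal sheaf and $p$ the projection to $\Hilb:=\Hilb^{P_{\oh_D(m)}}_D$. Proposition \ref{prop_D} applied to each fiber gives $\mathcal{I}|_{[C]\times D}\cong \oh_D(-m)$, hence the fiberwise cohomology of $\mathcal{I}(m):=\mathcal{I}\otimes p_D^*\oh_D(m)$ equals $H^\bullet(D,\oh_D)$. Since $D$ is a degree-$n$ surface in $\PP^3$, the exact sequence $0\to \oh_{\PP^3}(-n)\to \oh_{\PP^3}\to \oh_D\to 0$ yields $H^1(D,\oh_D)=0$ and $H^0(D,\oh_D)=\CC$. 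Cohomology and base change then promotes $\mathcal{L}:=p_*\mathcal{I}(m)$ to a line bundle on $\Hilb$, and the inclusion $\mathcal{I}(m)\hookrightarrow p_D^*\oh_D(m)$ pushes down to a line sub-bundle $\mathcal{L}\hookrightarrow \mathrm{H}^0(D,\oh_D(m))\otimes \oh_{\Hilb}$. This is precisely the datum of a morphism $\psi\colon \Hilb \to \PP\mathrm{H}^0(D,\oh_D(m))$. The compositions $\phi\circ\psi$ and $\psi\circ\phi$ unwind tautologically to the identities.

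The only real technical point is the cohomology and base change step, which requires both the vanishing of the fiberwise $H^1$ and the local freeness of the fiberwise $H^0$ of $\mathcal{I}(m)$; both are guaranteed by combining Proposition \ref{prop_D} with the identity $H^1(D,\oh_D)=0$ derived above. There should be no further obstacles.
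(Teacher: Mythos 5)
Your proof is correct, but it takes a genuinely different route from the paper's. Both arguments start the same way: the tautological family over $\PP\mathrm{H}^0(D,\oh_D(m))$ induces a morphism to $\Hilb^{P_{\oh_D(m)}}_D$ which Proposition \ref{prop_D} shows to be bijective on closed points. From there the paper observes that this morphism is a proper injection (hence a closed immersion, hence a homeomorphism onto the Hilbert scheme), and then upgrades it to an isomorphism by proving the Hilbert scheme is \emph{smooth}: it identifies the tangent space $T_{[C]}\Hilb^{P_{\oh_D(m)}}_D=\mathrm{H}^0(C,N_{C/D})=\mathrm{H}^0(C,\oh_C(m))$ and checks its dimension matches $\dim\PP\mathrm{H}^0(D,\oh_D(m))$. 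You instead build the inverse morphism directly: Proposition \ref{prop_D} gives $\mathcal{I}|_{[C]\times D}\cong\oh_D(-m)$ fiberwise, the vanishing $\mathrm{H}^1(D,\oh_D)=0$ lets cohomology and base change produce the line bundle $\mathcal{L}=p_*\mathcal{I}(m)$, and the line sub-bundle $\mathcal{L}\hookrightarrow\mathrm{H}^0(D,\oh_D(m))\otimes\oh_{\Hilb}$ defines $\psi$. Your approach buys independence from the deformation-theoretic identification of the tangent space and from any a priori smoothness claim (scheme-theoretic surjectivity onto a possibly non-reduced Hilbert scheme comes for free from the inverse), at the cost of the base-change bookkeeping; the paper's approach is shorter and delivers smoothness of the Hilbert scheme as an explicit byproduct. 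Two small points you should spell out if writing this up: fiberwise injectivity of $\mathcal{L}\otimes k(s)\to\mathrm{H}^0(D,\oh_D(m))$ (needed for "line sub-bundle") follows from base change plus injectivity of $\mathrm{H}^0(D,I_{C/D}(m))\to\mathrm{H}^0(D,\oh_D(m))$; and the identity $\phi\circ\psi=\mathrm{id}$ uses that $p^*p_*\mathcal{I}(m)\to\mathcal{I}(m)$ is surjective, which holds because each fiber $\mathcal{I}(m)|_{[C]}\cong\oh_D$ is globally generated, so that the zero locus of $p^*\mathcal{L}\to p_D^*\oh_D(m)$ is exactly the universal subscheme.
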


\begin{proof}
Let 
\[f:=\pr_2\circ i\colon \widetilde{C}\xhookrightarrow{i} D\times \PP\mathrm{H}^0\big(D, \oh_D(m)\big)\xra{\pr_2} \PP\mathrm{H}^0\big(D, \oh_D(m)\big)\]
be the universal hypersurface. Note that $f$ is proper, flat, and each fibre over a closed point is a closed subscheme in $D$ with the Hilbert polynomial $P_{\oh_D(m)}$. Then we obtain a morphism
\[q\colon\PP\mathrm{H}^0\big(D, \oh_D(m)\big)\to \Hilb^{P_{\oh_D(m)}}_D.\]
Since fibres of $f$ over distinct points are also distinct, we know that $q$ is a closed immersion. Now by Proposition \ref{prop_D}, $q$ is bijective, and therefore a homeomorphism. Thus we see $\dim \Hilb^{P_{\oh_D(m)}}_D=\dim \PP\mathrm{H}^0\big(D,\oh_D(m)\big)$. 

To prove $q$ is an isomorphism, we only need to show that $\Hilb^{P_{\oh_D(m)}}_D$ is smooth. To this end, the tangent space at any closed point $[C]\in \Hilb^{P_{\oh_D(m)}}_D$ is given by $T_{[C]}\Hilb^{P_{\oh_D(m)}}_D=\mathrm{H}^0\big(C, N_{C/D}\big)$. By Proposition \ref{prop_D}, we have $\mathrm{H}^0\big(C, N_{C/D}\big)=\mathrm{H}^0\big(C, \oh_C(m)\big)$.
Then it is not hard to compute that
\[\dim T_{[C]}\Hilb^{P_{\oh_D(m)}}_D=\dim \mathrm{H}^0\big(D, \oh_D(m)\big)-1= \dim \PP\mathrm{H}^0\big(D, \oh_D(m)\big)=\dim \Hilb^{P_{\oh_D(m)}}_D.\]
Thus $\Hilb^{P_{\oh_D(m)}}_D$ is smooth and the result follows.
\end{proof}

\begin{proposition} \label{moduli_fibre}
Let $X\subset \PP^4$ be a smooth hypersurface of degree $n\leq 5$. Let $m\geq 2$ and $d=nm$ be integers. Then the moduli space $\cM_d$ is smooth and a $\PP^{\dim \mathrm{H}^0\big(D, \oh_D(m)\big)-1}$-bundle over $\Hilb_X^{P_{\oh_X(1)}}$. Here $D\subset X$ is any hyperplane section.
\end{proposition}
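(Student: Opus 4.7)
The strategy is to identify $\cM_d$ with the projectivization of a natural vector bundle over $\cH := \Hilb_X^{P_{\oh_X(1)}}$. By Theorem~\ref{20_51}, each $[C] \in \cM_d$ determines a unique hyperplane section $D \subset X$ containing $C$, and by Proposition~\ref{prop_D}, $C$ is cut out in $D$ by a nonzero section of $\oh_D(m)$, unique up to scalar. So set-theoretically $\cM_d$ should parametrize pairs $([D], [s])$, and the plan is to promote this bijection to an isomorphism of schemes.

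First, I build the bundle. Let $p\colon \cD \to \cH$ be the universal hyperplane section, and set $E := p_* \oh_\cD(m)$, where $\oh_\cD(m)$ denotes the restriction of $\pi_X^* \oh_X(m)$. Using the sequence $0 \to \oh_X(m-1) \to \oh_X(m) \to \oh_D(m) \to 0$ together with the Lefschetz-type vanishing $H^i(X, \oh_X(k)) = 0$ for $0 < i < 3$ and all $k$, I obtain $h^0(D, \oh_D(m)) = h^0(\oh_X(m)) - h^0(\oh_X(m-1))$ independent of $[D]$, and $h^1(D, \oh_D(m)) = 0$. Cohomology and base change then shows $E$ is locally free.

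Next I construct mutually inverse morphisms $\Phi\colon \PP(E) \to \cM_d$ and $\Psi\colon \cM_d \to \PP(E)$. For $\Phi$, the tautological quotient of $p^* E$ on $\PP(E)$, pulled back to $\cD \times_\cH \PP(E)$, yields a universal section of $\oh_\cD(m) \otimes \mathrm{pr}^* \oh_{\PP(E)}(1)$, whose zero locus is flat over $\PP(E)$ with fibers the extremal curves of Proposition~\ref{prop_D}; the universal property of the Hilbert scheme then produces $\Phi$. For $\Psi$, I start from the universal curve $\cC \subset \cM_d \times X$: the identity $h^0(X, I_C(1)) = 1$ (any two hyperplane sections through $C$ would meet in a degree-$\leq n$ curve, too small to contain $C$) together with cohomology and base change gives a morphism $\pi\colon \cM_d \to \cH$. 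Then on the pullback $\cD_{\cM_d} := \cM_d \times_\cH \cD$, the curve $\cC$ is a relative Cartier divisor and $I_{\cC/\cD_{\cM_d}}(m)$ is fibrewise trivial, hence of the form $q^* M$ for a line bundle $M$ on $\cM_d$, where $q\colon \cD_{\cM_d} \to \cM_d$ is the projection. Pushing the sequence $0 \to q^* M \to \oh_{\cD_{\cM_d}}(m) \to \oh_\cC(m) \to 0$ down by $q$ gives an inclusion $M \hookrightarrow \pi^* E$ as a line subbundle, producing $\Psi$.

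The main obstacle is to verify that $\Phi$ and $\Psi$ are scheme-theoretically inverse rather than merely bijections on closed points. The pointwise statement is immediate from Theorem~\ref{20_51} and Proposition~\ref{prop_D}; the scheme-theoretic claim requires tracing the universal families through both constructions and confirming they return the original data. Once this is established, since $\cH \cong \PP H^0(X, \oh_X(1))$ is a smooth projective space and $\PP(E)$ is a projective bundle over it, both the smoothness of $\cM_d$ and the claimed $\PP^{\dim H^0(D,\oh_D(m))-1}$-bundle structure follow immediately.
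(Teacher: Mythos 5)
Your construction is correct in outline but takes a genuinely different route from the paper. The paper proceeds in three steps: it gets smoothness of $\cM_d$ directly from deformation theory, since by Theorem~\ref{20_51} every $[C]\in\cM_d$ is a $(1,\frac{d}{n},n)$-complete intersection in $\PP^4$ and the obstruction spaces vanish; it then identifies $\cM_d$ with the flag Hilbert scheme $\FHilb^{dt-\frac{1}{2n}d^2-\frac{n-4}{2}d,\,P_{\oh_X(1)}}_X$ by showing $\pr_{1*}(\mathcal{I}(1))$ is a line bundle on $\cM_d$ (the uniqueness in Lemma~\ref{extremal_curve}); and finally it identifies the fibres of the projection to $\Hilb^{P_{\oh_X(1)}}_X$ with $\Hilb^{P_{\oh_D(m)}}_D\cong\PP\mathrm{H}^0(D,\oh_D(m))$ via Lemma~\ref{moduli}, which is itself proved by a bijective-closed-immersion-plus-tangent-space argument. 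You instead build the projective bundle $\PP(E)$ globally over $\cH$ and exhibit $\cM_d\cong\PP(E)$, which buys you the local triviality of the fibration and the smoothness of $\cM_d$ in one stroke (the paper's argument, read literally, only identifies the fibres and is terse about why the map is a bundle), at the cost of the cohomology-and-base-change bookkeeping and the verification that $\Phi$ and $\Psi$ are scheme-theoretically inverse. That last step, which you flag but do not execute, is the only real debt in your write-up, and it is closable by routine means: $\Psi\circ\Phi$ is a morphism of the reduced scheme $\PP(E)$ to itself agreeing with the identity on closed points, hence is the identity; and $\Phi\circ\Psi$ classifies the zero locus of the composite $q^*M=I_{\cC/\cD_{\cM_d}}(m)\hookrightarrow\oh_{\cD_{\cM_d}}(m)$, which is by construction the universal curve $\cC$, so it is the identity by the universal property of the Hilbert scheme (one cannot shortcut this second half by a pointwise argument, since the reducedness of $\cM_d$ is part of what is being proved). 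With that paragraph added, your proof is complete and, in my view, slightly more self-contained than the paper's on the bundle structure.
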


\begin{proof}
Note that $\cM_d$ is smooth and isomorphic to the flag Hilbert scheme \[\cM_d\cong \FHilb^{dt-\frac{1}{2n}d^2-\frac{n-4}{2}d,P_{\oh_X(1)}}_X.\] 
Indeed, every curve $[C]\in \cM_d$ is a $(1,\frac{d}{n},n)$-complete intersection in $\PP^4$ by Theorem \ref{20_51}, then the smoothness follows from a direct computation of obstruction spaces. And $\FHilb^{dt-\frac{1}{2n}d^2-\frac{n-4}{2}d,P_{\oh_X(1)}}_X$ is constructed as the projective bundle $\PP\big(\pr_{1*}(\mathcal{I}(1))\big)$ over $\cM_d$, where $\mathcal{I}$ is the universal ideal sheaf on $\cM_d\times X$ and $\pr_1\colon \cM_d\times X\to \cM_d$ is the projection. By the uniqueness part of Lemma \ref{extremal_curve}, we know that $\pr_{1*}(\mathcal{I}(1))$ is a line bundle on $\cM_d$. Then we conclude that $\cM_d\cong \FHilb^{dt-\frac{1}{2n}d^2-\frac{n-4}{2}d,P_{\oh_X(1)}}_X$.

Now we have a natural morphism
\[p\colon \cM_d\cong \FHilb^{dt-\frac{1}{2n}d^2-\frac{n-4}{2}d,P_{\oh_X(1)}}_X\hookrightarrow \cM_d\times \Hilb^{P_{\oh_X(1)}}_X\to \Hilb^{P_{\oh_X(1)}}_X\]
given by $[C]\mapsto [C\subset D]\mapsto [C]\times [D]\mapsto [D]$, 
with fibres $p^{-1}([D])\cong \Hilb_D^{P_{\oh_D(m)}}$ over points $[D]\in \Hilb^{P_{\oh_X(1)}}_X$. Then the result follows from Lemma \ref{moduli}.
\end{proof}

\subsection{Non-vanishing}

In this subsection, we fix $X$ to be a quintic 3-fold. We are going to prove Theorem \ref{non_vanish}. Let
\[B(d):=\frac{d^2+5d+10}{10}.\]

For any two integers $d\geq 0$ and $n$, the Hilbert scheme $\Hilb^{dt+n}_X$ can be regarded as the moduli space of torsion-free sheaves on $X$ with Chern character \[\big(1,0,-\frac{d}{5}H^2,-\frac{n}{5}H^3\big).\]
In \cite{thomasDT}, Thomas constructed a zero-dimensional virtual cycle $[\Hilb^{dt+n}_X]^{vir}$ on the moduli space $\Hilb^{dt+n}_X$. Then we define a \emph{Donaldson--Thomas (DT) invariant} by
\[I_{n, d}:=\int_{[\Hilb^{dt+n}_X]^{vir}} 1.\]
When $\Hilb^{dt+n}_X$ is smooth, we have
\[I_{n,d}=(-1)^{\dim \Hilb^{dt+n}_X}e(\Hilb^{dt+n}_X),\]
where $e(-)$ denotes the topological Euler characteristic.

A relation between PT-invariants and DT-invariants is conjectured by Pandharipande and Thomas \cite[Conjecture 3.3]{PT07} and proved by Bridgeland \cite{BriHall}. Let 
\[\DT_d(q):=\sum_{n} I_{n, d}q^n\]
be the generating series of degree $d$  DT-invariants. By \cite[Theorem 1.1]{BriHall}, we obtain
\[\PT_d(q)\cdot \DT_0(q)=\DT_d(q).\]
Note that $I_{n,0}=0$ for any $n<0$. Then for any $n$ and $d>0$, we have
\[\sum_{m\geq 0} P_{n-m, d}\cdot I_{m, 0}=I_{n, d}.\]
In particular, if $n=1-B(d)$, by Corollary \ref{PT_zero} we obtain
\begin{equation} \label{PT=DT}
    P_{1-B(d),d}=P_{1-B(d),d}\cdot I_{0,0}=I_{1-B(d),d}.
\end{equation}

\begin{lemma} \label{PT_lem}
Let $d\in 5\ZZ_{\geq 1}$ be an integer.
Then the coefficient of $q^{1-B(d)}t^{d}$ in $\sF_P(q, t)$ is equal to $P_{1-B(d),d}$.
\end{lemma}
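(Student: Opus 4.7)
The plan is to expand the logarithm and show that only the linear term survives after invoking the vanishing from Corollary \ref{PT_zero}. Writing $\PT'(q,t) := \PT(q,t) - 1 = \sum_{d'\geq 1, n\in\ZZ} P_{n,d'}\, q^n t^{d'}$, I would use
\[
\sF_P(q,t) = \log(1 + \PT'(q,t)) = \sum_{k\geq 1} \frac{(-1)^{k-1}}{k}\, \PT'(q,t)^k.
\]
Thus the coefficient of $q^{1-B(d)} t^d$ in $\sF_P$ is a $\QQ$-linear combination of products $\prod_{i=1}^k P_{n_i, d_i}$ with $k\geq 1$, all $d_i \geq 1$, $\sum d_i = d$, and $\sum n_i = 1 - B(d)$. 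The $k=1$ term contributes exactly $P_{1-B(d), d}$, so it remains to show all $k \geq 2$ contributions vanish.

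For the $k \geq 2$ case, by Theorem \ref{bound_quintic} and Corollary \ref{PT_zero}, $P_{n_i, d_i} = 0$ unless $n_i \geq 1 - B(d_i)$. Assuming the latter for every $i$, summing gives
\[
1 - B(d) = \sum_{i=1}^k n_i \geq k - \sum_{i=1}^k B(d_i),
\]
i.e.\ $\sum_i B(d_i) - B(d) \geq k-1$. Plugging in $B(d') = \tfrac{d'^2 + 5d' + 10}{10}$ and using $\sum d_i = d$, the left-hand side simplifies to
\[
\sum_{i=1}^k B(d_i) - B(d) = \frac{\sum_{i=1}^k d_i^2 - d^2 + 10(k-1)}{10},
\]
so the needed inequality is equivalent to $\sum_i d_i^2 \geq d^2 = \bigl(\sum_i d_i\bigr)^2$. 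Since every $d_i \geq 1$, the cross terms $2\sum_{i<j} d_i d_j$ are strictly positive whenever $k \geq 2$, giving $\sum_i d_i^2 < d^2$, a contradiction. Hence no $k \geq 2$ product can satisfy the constraints with all factors nonzero, and those terms drop out.

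There is no serious obstacle here: the argument is essentially a bookkeeping computation, with the only input being Corollary \ref{PT_zero} applied factor-by-factor and the strict subadditivity $\sum d_i^2 < (\sum d_i)^2$ tailored to the explicit quadratic $B$. The mild point to be careful about is that the hypothesis $d \in 5\ZZ_{\geq 1}$ is used only implicitly to ensure $1 - B(d) \in \ZZ$, so that the claim makes sense; the combinatorial part works for any $d \geq 1$ and any degree partition with $d_i \geq 1$.
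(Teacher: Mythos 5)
Your proof is correct and follows essentially the same route as the paper: expand $\log\PT$, note the $k=1$ term gives $P_{1-B(d),d}$, and kill all $k\geq 2$ products by combining the lower bound $n_i\geq 1-B(d_i)$ from Corollary \ref{PT_zero} with the superadditivity of $B(d)-1$ (the paper phrases this via the equality case of \eqref{bd_2} forcing $s=1$, while you unwind it to the strict inequality $\sum_i d_i^2<\bigl(\sum_i d_i\bigr)^2$ for $k\geq 2$ — the same computation). Your closing remark that $d\in 5\ZZ_{\geq 1}$ is only needed to make $1-B(d)$ an integer is also accurate.
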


\begin{proof}
By a similar argument in Proposition \ref{logPT_zero}, we know that the coefficient of $q^{1-B(d)}t^{d}$ in $\sF_P(q, t)$ is a $\QQ$-linear combination of integers in the set
\[A_{1-B(d),d}=\{\prod_{i=1}^s P_{n_i, d_i} \mid s,d_i\in \mathbb{Z}_{\geq 1}, n_i\in \ZZ, n_1+...+n_s=1-B(d) ~\text{and}~ d_1+...+d_s=d\}.\]
Moreover, we can assume that $n_i\geq 1-B(d_i)$ for any $i$. Then we obtain
\[\sum^s_{i=1} \big(B(d_i)-1\big)\geq -(n_1+...+n_s)=B(d)-1.\]
Since \eqref{bd_2} in Proposition \ref{logPT_zero} holds for $B(d)$, we have
\[\sum^s_{i=1} \big(B(d_i)-1\big)=-(n_1+...+n_s)=B(d)-1.\]
Note that in our case $B(d)=\frac{d^2+5d+10}{10}$, it is not hard to check that the only possibility is $s=1$. Then we see
\[A_{1-B(d), d}=\{P_{1-B(d), d}\}\cup \{0\}\]
and the result follows.
\end{proof}

\begin{lemma} \label{BPS_lem}
Let $d\in 5\ZZ_{\geq 1}$ be an integer.
Then we have 
\[n^{d}_{B(d)}=P_{1-B(d),d}=I_{1-B(d),d}.\]
\end{lemma}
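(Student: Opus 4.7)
The plan is to extract the coefficient of $q^{1-B(d)}t^d$ in the series $\sF_P(q,t)$ from both sides of the GV/PT identity \eqref{GV_PT_5}, and then combine with the PT/DT wall-crossing identity \eqref{PT=DT}.

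First, the second equality $P_{1-B(d),d} = I_{1-B(d),d}$ is immediate from \eqref{PT=DT}, so the real content is the first equality $n^d_{B(d)} = P_{1-B(d),d}$. By Lemma \ref{PT_lem}, the coefficient of $q^{1-B(d)}t^d$ in $\sF_P(q,t)$ equals $P_{1-B(d),d}$. The strategy is to compute this same coefficient from the right-hand side of \eqref{GV_PT_5} and verify that only one GV-term contributes, namely $n^d_{B(d)}$.

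To do this, I would expand, for $g \geq 1$,
\[\bigl((-q)^{r/2}-(-q)^{-r/2}\bigr)^{2g-2} = \sum_{j=0}^{2g-2} \binom{2g-2}{j}(-1)^{2g-2-j}(-q)^{r(j-(g-1))},\]
so that the exponents of $q$ appearing are $r(j-(g-1))$ for $0 \leq j \leq 2g-2$. Since $B(d) > 1$ for $d \geq 5$, the $g=0$ piece $\frac{(-q)^r}{(1-(-q)^r)^2}$ contributes only positive powers of $q$ and is irrelevant. For a term with divisor $r \mid d$ and genus $g \geq 1$ to contribute to $q^{1-B(d)}$, I need $r(g-1-j) = B(d)-1$ for some $0 \leq j \leq 2g-2$, which forces $g \geq \tfrac{B(d)-1}{r}+1$.

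The key observation is that for $B(d) = \tfrac{d^2+5d+10}{10}$ and $r > 1$ with $r \mid d$, one has the \emph{strict} inequality
\[\frac{B(d)-1}{r} + 1 > B(d/r),\]
which is a direct computation: the difference equals $\tfrac{d^2}{10}(1 - \tfrac{1}{r}) > 0$. Combined with Theorem \ref{main_thm_1.1}, which forces $n_g^{d/r} = 0$ for $g > B(d/r)$, this shows no term with $r > 1$ contributes. For $r=1$, the inequality becomes equality, allowing exactly $g = B(d)$, $j = 0$; the corresponding contribution is
\[n_{B(d)}^{d} \cdot \frac{(-1)^{B(d)-1}}{1} \cdot (-1)^{1-B(d)} \cdot \binom{2B(d)-2}{0}(-1)^{2B(d)-2} = n_{B(d)}^{d}.\]
Setting this equal to $P_{1-B(d),d}$ from Lemma \ref{PT_lem} yields the desired identity.

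The main obstacle is purely bookkeeping: carefully tracking signs in the expansion of $\bigl((-q)^{r/2} - (-q)^{-r/2}\bigr)^{2g-2}$ and verifying the strict inequality $\tfrac{B(d)-1}{r}+1 > B(d/r)$ for every $r > 1$ dividing $d$. No new geometric input is needed beyond the vanishing Theorem \ref{main_thm_1.1}, the Bridgeland PT/DT identity $\PT_d \cdot \DT_0 = \DT_d$, and Lemma \ref{PT_lem}.
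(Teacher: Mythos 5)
Your proposal is correct and follows essentially the same route as the paper's proof: extract the coefficient of $q^{1-B(d)}t^{d}$ from \eqref{GV_PT_5}, use the strict inequality $\frac{B(d)-1}{r}+1>B(\frac{d}{r})$ for $r\geq 2$ together with Theorem \ref{main_thm_1.1} to eliminate all multiple-cover contributions, and conclude via Lemma \ref{PT_lem} and \eqref{PT=DT}. The only flaw is a harmless arithmetic slip: the difference $\frac{B(d)-1}{r}+1-B(\frac{d}{r})$ equals $\frac{d^2}{10r}\bigl(1-\frac{1}{r}\bigr)$, not $\frac{d^2}{10}\bigl(1-\frac{1}{r}\bigr)$, but this is still strictly positive for $r\geq 2$, so the argument is unaffected.
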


\begin{proof}
From the proof of Proposition \ref{BPS_zero}, the coefficient $S_{1-B(d),d}$ of $q^{1-B(d)}t^{d}$ in $\sF_P(q, t)$ is given by
\[S_{1-B(d), d}=n^d_{B(d)} +\sum_{g\geq 1, rl=1-B(d), ~ 1-g\leq l~ \text{and}~r\geq 2~ \text{with} ~\frac{d}{r}\in \ZZ} n_g^{\frac{d}{r}} \frac{(-1)^{g-B(d)}}{r} \binom{2g-2}{l+g-1}.\]
And we have
\[g\geq -l+1\geq \frac{B(d)-1}{r}+1.\]
Since $B(d)=\frac{d^2+5d+10}{10}$, one can check
\[g\geq \frac{B(d)-1}{r}+1>B(\frac{d}{r})\]
when $r\geq 2$, which gives $S_{1-B(d),d}=n^{d}_{B(d)}$ by Theorem \ref{main_thm_1.1}. Then the result follows from Lemma \ref{PT_lem} and \eqref{PT=DT}.
\end{proof}

\begin{proposition} \label{DT_nonvanish}
Let $m\geq 2$ and $d=5m$ be integers. Then we have
\[I_{-\frac{d^2+5d}{10},d}=(-1)^{\binom{m+3}{3}-\binom{m-2}{3}+3}\cdot 5\big(\binom{m+3}{3}-\binom{m-2}{3}\big).\]
%\[
%I_{-\frac{d^2+5d}{10},d}= \begin{cases} (-1)^{\binom{m+3}{3}+3}\cdot 5\binom{m+3}{3}, & \text{if} ~ 2\leq m\leq 4 \\
%(-1)^{\binom{m+3}{3}-\binom{m-2}{3}+3}\cdot 5\big(\binom{m+3}{3}-\binom{m-2}{3}\big), & \text{if} ~ m\geq 5.
%\end{cases}
%\]
Here if $m-2<3$, we define $\binom{m-2}{3}:=0$.
\end{proposition}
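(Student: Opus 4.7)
The plan is to realize $I_{-\frac{d^2+5d}{10},d}$ as a signed topological Euler characteristic of the moduli space $\cM_d$ and then to compute this Euler characteristic using the projective-bundle description in Proposition \ref{moduli_fibre}. First I would observe that, for a curve $C$ with $\ch(I_C) = (1,0,-\frac{d}{5}H^2,-\frac{n}{5}H^3)$, the Hilbert polynomial of $\oh_C$ is $dt+1-g$, so $\Hilb_X^{dt-\frac{d^2+5d}{10}} = \cM_d$ is exactly the Hilbert scheme whose degree $d$ DT invariant at $n = 1-B(d) = -\frac{d^2+5d}{10}$ we need.

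Next I would invoke Proposition \ref{moduli_fibre}: since $d=5m$ with $m\ge 2$, the scheme $\cM_d$ is smooth and is a Zariski-locally trivial projective bundle of relative dimension $\dim \mathrm{H}^0(D,\oh_D(m))-1$ over $\Hilb_X^{P_{\oh_X(1)}}$, where $D$ denotes a hyperplane section of $X$. The base $\Hilb_X^{P_{\oh_X(1)}}$ is the complete linear system of hyperplane sections of $X$, hence isomorphic to $\PP\mathrm{H}^0(\PP^4,\oh(1)) \cong \PP^4$, with Euler characteristic $5$ and dimension $4$. For the fibre, $D$ is a smooth quintic surface in $\PP^3$, and from the short exact sequence
\begin{equation*}
0 \to \oh_{\PP^3}(m-5) \to \oh_{\PP^3}(m) \to \oh_D(m) \to 0
\end{equation*}
together with $H^1(\PP^3,\oh_{\PP^3}(m-5))=0$, I get $\dim \mathrm{H}^0(D,\oh_D(m)) = \binom{m+3}{3}-\binom{m-2}{3}$, with the convention that $\binom{m-2}{3}=0$ when $m-2<3$.

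Since $\cM_d$ is smooth, the virtual cycle equals the signed Euler characteristic, so
\begin{equation*}
I_{1-B(d),d} = (-1)^{\dim \cM_d}\, e(\cM_d),
\end{equation*}
and the projective-bundle structure gives $e(\cM_d) = 5\bigl(\binom{m+3}{3}-\binom{m-2}{3}\bigr)$ and $\dim \cM_d = 4 + \bigl(\binom{m+3}{3}-\binom{m-2}{3}-1\bigr) = 3 + \binom{m+3}{3}-\binom{m-2}{3}$, which yields the claimed formula after noting that $(-1)^{3+k}=(-1)^{k+3}$.

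The main obstacle (really the only technical content beyond bookkeeping) is verifying that the DT virtual class indeed reduces to the signed Euler characteristic on the smooth moduli space $\cM_d$; this is standard for Hilbert schemes of curves on Calabi--Yau 3-folds by Thomas's construction \cite{thomasDT}, so no serious work is needed. The rest is cohomology of line bundles on a quintic surface, handled by the displayed short exact sequence.
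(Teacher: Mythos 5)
Your proof is correct and follows essentially the same route as the paper: invoke Proposition \ref{moduli_fibre} to write $\cM_d$ as a $\PP^{\dim \mathrm{H}^0(D,\oh_D(m))-1}$-bundle over $\PP^4$, use $I_{1-B(d),d}=(-1)^{\dim\cM_d}e(\cM_d)$ on the smooth Hilbert scheme, and compute $\dim \mathrm{H}^0(D,\oh_D(m))$ from the restriction sequence (the paper leaves this last computation implicit). The only quibble is that you call $D$ a \emph{smooth} quintic surface, which a hyperplane section of $X$ need not be, but your cohomology computation only uses that $D\in|\oh_{\PP^3}(5)|$, so nothing is affected.
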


\begin{proof}
By Proposition \ref{moduli_fibre}, we obtain
\[(-1)^{-\dim \cM_d}I_{-\frac{d^2+5d}{10},d}=e(\cM_d)=e(\PP^4)e(\PP^{\dim \mathrm{H}^0\big(D, \oh_D(m)\big)-1})=5\dim \mathrm{H}^0\big(D, \oh_D(m)\big),\]
where $D\subset X$ is any hyperplane section. Then the result follows from a direct computation of $\dim \cM_d$ and $\dim \mathrm{H}^0\big(D, \oh_D(m)\big)$.
\end{proof}

Now we are ready to prove one of our main results.

\begin{proof}[Proof of Theorem \ref{non_vanish}]
The result follows from Lemma \ref{BPS_lem} and Proposition \ref{DT_nonvanish}.
\end{proof}

\begin{remark} \label{m=1}
For the remaining case $m=1$, a similar argument as above also works. One can show that any curve of degree $5$ and genus $6$ in $X$ lies on some hyperplane sections. Then Theorem \ref{20_51} also holds in this case. Therefore, the same argument in Proposition \ref{moduli_fibre} shows that the flag Hilbert scheme $\FHilb_X^{5t-5, p_{\oh_X(1)}}$ is a $\PP^1$-bundle over $\cM_5$, and is also a $\PP^3$-bundle over $\PP^4$. Thus we can compute that $I_{-5,5}=n^5_6=10$. 
%Since the value of $n^5_6$ also follows from \eqref{GW=GV} and the description of $F_g$ for $g\leq 5$ in \cite{GJR18}, we will not discuss this single invariant in details in our paper.
\end{remark}

\begin{appendix}

\section{Computing GW-invariants via BCOV axioms and Castelnuovo bound (by Shuai Guo)} \label{sec_appendix}

As Klemm and his collaborators showed, BCOV axioms + Castelnuovo bound compute GW-invariants of quintic 3-folds up to $g=53$ \cite{HKQ09}. To help the readers in mathematics, we explain the main reason behind their argument in the appendix. Readers are encouraged to consult their paper \cite{HKQ09} for more details.

A simple investigation of GV-formula \eqref{GW=GV} tells that when $1\leq d$, the invariant $N_{g,d}$ is computed from $n_{g'}^{d'}$ for $g'\leq g,1\leq d'\leq d$ with $n_g^d$ as the leading term. This is the situation that there is an upper triangular linear transformation from $\{n_g^d\}_{0\leq g,1\leq d}$ to $\{N_{g,d}\}_{0\leq g,1\leq d}$. One can invert it
to compute $n_{g}^{d}$ from $N_{g', d'}$ for $g'\leq g,1\leq d'\leq d$.

Now assume that there is a function
\[D(-)\colon \ZZ_{\geq 1}\to \ZZ\]
such that $n_g^{d}=0$ for any $d\leq D(g)$. Then for any two integers $1\leq d$ and $0\leq g$ satisfies $d\leq D(g)$, one can compute $N_{g,d}$ from $\{n_{g'}^{d'}\}_{g'<g, 1\leq d'\leq d}$.

More explicitly, for $g\leq 53, g\neq 51, 1\leq d\leq \lfloor\frac{2g-2}{5}\rfloor$ and $g=51, 1\leq d<20$, the Castelnuovo bound (Corollary \ref{cast_bound}) yields that
$N_{g,d}$ is computed from $n^{g'}_{d'}$ for $g'<g, d'\leq d$.  For $N_{51, 20}$, we add an additional contribution $n_{51}^{20}$, which is $n^{20}_{51}=175$ by Theorem \ref{non_vanish}.  Then, we compute $n^{g'}_{d'}$ using the corresponding $\{N_{g'', d''}\}_{g''\leq g', 1\leq d''\leq d'}$. As a consequence, for $g\leq 53$ and $1\leq d\leq \lfloor\frac{2g-2}{5}\rfloor$, the invariant $N_{g,d}$ is computed from $N_{g',d'}$ for $g'<g, 1\leq d'\leq d$ and $n_{51}^{20}$.

Now, we are back to Klemm's argument. Applying finite generation and holomorphic anomaly equation inductively on genus $g$, 
we can write
$$F_g=  Y^{-(g-1)}   f_g+ \text{known terms} ,\qquad f_g=  \sum^{3g-3}_{i = 0} a_{i,g} Y^i .$$
For quintic 3-folds, it is known that 
$$ Y=(1-5^5\cdot q)^{-1}$$
where $q$ is the so-called B-model coordinate at large complex structure limit $q=0$. It is related to A-model coordinate $t$ via the mirror map $q=q(t)$.

We aim to solve the initial data inductively by genus $g$.

When $g\leq 1$, the result is known by \cite{Givental1998,LLY97,Zi08}. For a given $g>1$, assuming we know $F_{h}$ for $h<g$.
Then the initial data can be fixed via
\begin{itemize}
\item Step 1.  Orbifold regularity condition, $a_{i, g} =0$ for $i\leq \lceil\frac{3g-3}{5}\rceil  $ ;
\item Step 2.  Conifold gap condition, solve $a_{i,g}$ for $i>g-1$;
\item Step 3.  The degree $0$ GW-invariants $N_{g,0}$ are computable;
\item Step 4.  Castelnuovo bound:  fix $N_{g,d}$ via $\{N_{h,d} \}_{d> 0,h<g}$ for $d\leq \lfloor\frac{2g-2}{5}\rfloor$.
\end{itemize}

\subsection{Conifold gap condition}
The condition gives
$$
\sum^{g-1}_{k = 0} a_{g-1-k,g} Y^{-k}  +\sum^{2g-2}_{i = 1} a_{i+g-1,g} Y^{i}  + \text{known terms} = F_g = \frac{(-1)^{g-1} B_{2g}}{2g(2g-2)\Delta^{2g-2}}+O(\Delta^0),
$$
where $\Delta$ is the so-called flat coordinate at conifold $q=5^{-5}$ and $B_{2g}$ is the Bernoulli number.
It is known that 
$$
Y^{-1} = \frac{\delta}{1+\delta}=\Delta+O(\Delta^2) $$
where $\delta=1-5^{-5}q$. For $k\geq 0$ we have
$$Y^{-k}= O(\Delta^0), \qquad  \Delta \rightarrow 0.
$$
Hence the gap condition becomes

\begin{equation}  \label{conigap}
\sum^{2g-2}_{i = 1} a_{i+g-1,g} Y^{i}  + \text{known terms} = \frac{(-1)^{g-1} B_{2g}}{2g(2g-2)\Delta^{2g-2}} +O(\Delta^0), \qquad  \Delta \rightarrow 0 .
\end{equation}
Since  $\Delta =\delta+O(\delta^2)$,  we have
$$Y = 1+\frac{1}{\delta} = \frac{1}{\Delta} +O(\Delta^0), \qquad  \Delta \rightarrow 0.
$$ 
It is clear that  the restriction of
$$
Y,Y^2,\cdots,Y^{2g-2}  \in \Delta^{-(2g-2)}\mathbb Q[[\Delta]]
$$ 
to the space 
$$
\Delta^{-(2g-2)}\mathbb Q[\Delta]_{\deg\leq 2g-2}
$$
are of the form
$$
Y^k = \frac{1}{\Delta^k} \cdot \Big(1+ O(\Delta) \Big).
$$
Especially, they are linearly independent. By looking at the coefficient of 
$$\Delta^{-1}, \Delta^{-2},  \cdots,  \Delta^{-(2g-2)},$$
 the equation \eqref{conigap} will give a upper triangular linear equations in $\{a_{k+g-1}\}_{k=1}^{2g-2}$.
Hence the $\{a_{k+g-1}\}_{k=1}^{2g-2}$ can be solved uniquely via \eqref{conigap}.

\subsection{Degree $0$ invariants + Castelnuovo bound}
Now we have
$$
\sum^{g-1}_{k = 0} a_{g-1-k,g} Y^{-k}    + \text{known terms} =F_g = \sum_{d\geq 0} N_{g,d} t^d .$$
If $n_g^{d}=0$ for $d\leq D(g)$, one can compute $N_{g,d}$ from lower genus invariants via Gopakumar--Vafa formula, as the argument at the beginning shows. Combining with the induction hypothesis and the degree zero GW-invariants, this means the following is known:
$$
\sum_{d=0}^{D(g)} N_{g,d} t^d
$$
Together with the orbifold regularity and the fact that $t=q+O(q^2)$, we see
\begin{align*}
\sum^{\lfloor\frac{2(g-1)}{5} \rfloor}_{k = 0} a_{g-1-k,g} (1- 5^{5}\cdot q)^{k}    =\ &  \text{known  series in $t$} +O(t^{D(g)+1})\\ =\  &  \text{known polynomial in  $q$} +O(q^{D(g)+1})
\end{align*}
Note that the coefficients up to $q^{D(g)}$ of the inverse mirror map will enter the right-hand side of the above equation.
For any $D\geq 0$,
$$
\{  (1-5^5\cdot q)^{k} \}_{k=0}^{D}  \subset   \mathbb Q[q]_{\deg\leq D}
$$
are linearly independent. Therefore, if \[E:=\min\{D(g),\lfloor\frac{2(g-1)}{5} \rfloor\},\]
by looking at the coefficients of 
$$q^0, q^1,  \cdots,  q^{E},$$
in the above equation, we will obtain a lower triangular linear equations in $\{ a_{g-1-k,g} \}_{k=0}^{E}$.
Hence $\{ a_{g-1-k,g} \}_{k=0}^{E}$
can be uniquely solved.
Especially, if
$$
D(g) \geq \lfloor\frac{2(g-1)}{5} \rfloor , 
$$
all the $\{a_{j,g}\}_{j=0}^{3g-3}$ are uniquely solved by arguments above.

In particular, since BCOV axioms are proved by \cite{GJR18, CGL18, CGLFeynman} except the conifold gap condition, combining with Theorem \ref{main_thm_1.1} and Theorem \ref{non_vanish} we have:

\begin{theorem} \label{appendix_thm}
Let $G\leq 53$ be a positive integer. Assume that the conifold gap condition holds for $F_g(t)$ and any $g\leq G$. Then we can calculate $F_g(t)$ effectively up to genus $G$. 
\end{theorem}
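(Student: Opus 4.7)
The plan is to proceed by induction on $g \leq G$, assuming $F_h(t)$ has been determined for all $h < g$, and reducing the determination of $F_g(t)$ to fixing finitely many unknowns in the polynomial $f_g(Y) = \sum_{i=0}^{3g-3} a_{i,g} Y^i$. Indeed, by finite generation and the holomorphic anomaly equation (BCOV axioms (1)--(2), proved in \cite{GJR18, CGL18, CGLFeynman}), one has
\[
F_g = Y^{-(g-1)} f_g + \text{(expressions determined by genus $<g$ data)},
\]
so the whole task is to determine the $3g-2$ coefficients $\{a_{i,g}\}_{i=0}^{3g-3}$.

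First I would eliminate the low-degree coefficients using the orbifold regularity axiom (3), which forces $a_{i,g} = 0$ for $i \leq \lceil \tfrac{3g-3}{5}\rceil$. Next, I would apply the conifold gap condition (assumed in the statement) to determine the top coefficients $\{a_{i,g}\}_{i=g}^{3g-3}$. Concretely, after substituting the expansion $Y^{-1} = \Delta + O(\Delta^2)$ near the conifold point and using that the restrictions of $Y, Y^2, \ldots, Y^{2g-2}$ to the finite-dimensional space $\Delta^{-(2g-2)}\mathbb{Q}[\Delta]_{\deg \leq 2g-2}$ are linearly independent, matching the coefficients of $\Delta^{-1}, \ldots, \Delta^{-(2g-2)}$ in the expansion of $F_g$ against the prescribed polar part $\tfrac{(-1)^{g-1} B_{2g}}{2g(2g-2)\Delta^{2g-2}}$ yields an upper-triangular linear system which is uniquely solvable for $\{a_{i,g}\}_{i=g}^{3g-3}$. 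These two steps leave exactly the middle block $\{a_{i,g}\}$ with $\lceil\tfrac{3g-3}{5}\rceil < i \leq g-1$, i.e.~at most $\lfloor \tfrac{2(g-1)}{5}\rfloor$ remaining unknowns.

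To pin down this middle block, I would exploit that the Gopakumar--Vafa formula \eqref{GW=GV} defines a unipotent (upper-triangular in $d$) change of variables between $\{N_{g',d'}\}$ and $\{n_{g'}^{d'}\}$, so that for each $(g,d)$ with $d\leq D(g)$ where $n_g^{d}=0$ vanishes, one can recursively express $N_{g,d}$ purely in terms of $\{n_{g'}^{d'}\}_{g'<g,\, d'\leq d}$ (and hence, by the inductive hypothesis applied to lower genus, in terms of already-known data). By Corollary \ref{cast_bound} the Castelnuovo bound $n_g^d = 0$ holds for $d \leq \lfloor\tfrac{2g-2}{5}\rfloor$ whenever $g\leq 53$ and $g\neq 51$, and for $g = 51$ we replace the missing value $n_{51}^{20}$ by the explicit non-vanishing computation of Theorem \ref{non_vanish} ($m=4$). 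Combining with the known degree-zero invariants $N_{g,0}$ and the relation $t = q + O(q^2)$ from the mirror map, this yields an identity
\[
\sum_{k=0}^{\lfloor 2(g-1)/5\rfloor} a_{g-1-k,g}(1-5^{5}q)^{k} = \text{(known polynomial in $q$)} + O(q^{\lfloor 2(g-1)/5\rfloor + 1}).
\]
Since $\{(1-5^5 q)^k\}_{k=0}^{D}$ are linearly independent in $\mathbb{Q}[q]_{\deg \leq D}$, comparing the coefficients of $q^0, q^1, \ldots, q^{\lfloor 2(g-1)/5\rfloor}$ gives a lower-triangular linear system that uniquely determines the remaining $a_{g-1-k,g}$.

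The main technical point to verify carefully is that the number of known Taylor coefficients of $F_g$ in $t$ obtained from the Castelnuovo recursion matches or exceeds the number of unknowns left after Steps 1--2, i.e.~the inequality $D(g) \geq \lfloor\tfrac{2(g-1)}{5}\rfloor$ must hold for all $g \leq G \leq 53$; this is precisely what the conic $g = \tfrac{d^2+5d+10}{10}$ bounding the line $d = \tfrac{2g-2}{5}$ on $g\leq 53$ ensures, with the single exceptional slot $(g,d)=(51,20)$ handled by Theorem \ref{non_vanish}. The other subtlety is bookkeeping of the inverse mirror map $q(t)$ to degree $\lfloor 2(g-1)/5\rfloor$, which is a genus-zero input and hence part of the known data. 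Once these compatibilities are in place, $F_g(t)$ is algorithmically reconstructed, completing the induction.
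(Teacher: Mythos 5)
Your proposal is correct and follows essentially the same route as the paper's appendix: induction on genus, orbifold regularity to kill the low coefficients, the conifold gap condition to solve the top block via an upper-triangular system in the $\Delta$-expansion, and the Castelnuovo bound (with the exceptional slot $(g,d)=(51,20)$ supplied by Theorem \ref{non_vanish}) together with the linear independence of $\{(1-5^5q)^k\}$ to solve the remaining middle block via a lower-triangular system. The count of unknowns $\lfloor\tfrac{2g-2}{5}\rfloor$ and the key inequality $D(g)\geq\lfloor\tfrac{2(g-1)}{5}\rfloor$ for $g\leq 53$ are exactly the compatibilities the paper checks.
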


\end{appendix}

\bibliographystyle{plain}
{\small{\bibliography{quintic}}}

\begin{thebibliography}{10}

\bibitem{bayer2020desingularization}
A.~Bayer, S.~Beentjes, S.~Feyzbakhsh, G.~Hein, D.~Martinelli, F.~Rezaee, and
  B.~Schmidt.
\newblock The desingularization of the theta divisor of a cubic threefold as a
  moduli space.
\newblock {\em arXiv preprint, arXiv: 2011.12240}, 2020.

\bibitem{bayer2016space}
A.~Bayer, E.~Macr{\`\i}, and P.~Stellari.
\newblock {The space of stability conditions on abelian threefolds, and on some
  Calabi--Yau threefolds}.
\newblock {\em Inventiones mathematicae}, 206(3):869--933, 2016.

\bibitem{bayer2011bridgeland}
A.~Bayer, E.~Macr{\`\i}, and Y.~Toda.
\newblock {Bridgeland Stability conditions on threefolds I: Bogomolov--Gieseker
  type inequalities}.
\newblock {\em Journal of Algebraic Geometry}, 23, 03 2011.

\bibitem{BM22}
A.~Bayer and E.~Macrì.
\newblock {The unreasonable effectiveness of wall-crossing in algebraic
  geometry}.
\newblock {\em arXiv preprint, arXiv:2201.03654}, 2022.

\bibitem{bridgeland}
T.~Bridgeland.
\newblock {Stability conditions on triangulated categories}.
\newblock {\em Annals of Mathematics}, 166(2):317--345, 2007.

\bibitem{BriHall}
T.~Bridgeland.
\newblock {Hall algebras and curve-counting invariants}.
\newblock {\em Journal of the American Mathematical Society}, 24, 02 2010.

\bibitem{CGLFeynman}
H.~Chang, S.~Guo, and J.~Li.
\newblock {BCOV's Feynman rule of quintic 3-folds}.
\newblock {\em arXiv preprint, arXiv: 1810.00394}, 2018.

\bibitem{CGL18}
H.~Chang, S.~Guo, and J.~Li.
\newblock {Polynomial structure of Gromov--Witten potential of quintic
  3-folds}.
\newblock {\em Annals of Mathematics}, 194(3):585 -- 645, 2021.

\bibitem{Chang_2021}
H.~Chang, S.~Guo, J.~Li, and W.~Li.
\newblock {The theory of N-mixed-spin-P fields}.
\newblock {\em Geometry\&Topology}, 25(2):775--811, 2021.

\bibitem{Chen_2021}
Q.~Chen, F.~Janda, and Y.~Ruan.
\newblock {The logarithmic gauged linear sigma model}.
\newblock {\em Inventiones mathematicae}, 225(3):1077--1154, 2021.

\bibitem{CJR22}
Q.~Chen, F.~Janda, and Y.~Ruan.
\newblock {Punctured logarithmic R-maps}.
\newblock {\em arXiv preprint, arXiv: 2208.04519}, 2022.

\bibitem{CJRS18}
Q.~Chen, F.~Janda, Y.~Ruan, and A.~Sauvaget.
\newblock {Towards a Theory of Logarithmic GLSM Moduli Spaces}.
\newblock {\em arXiv preprint, arXiv: 1805.02304}, 2018.

\bibitem{DIW21}
A.~Doan, E.-N. Ionel, and T.~Walpuski.
\newblock {The Gopakumar-Vafa finiteness conjecture}.
\newblock {\em arXiv preprint, arXiv:2103.08221}, 2021.

\bibitem{FanLee19}
H.~Fan and Y.~P. Lee.
\newblock {Towards a quantum Lefschetz hyperplane theorem in all genera}.
\newblock {\em Geometry\&Topology}, 23(1):493--512, 2019.

\bibitem{F2022}
S.~Feyzbakhsh.
\newblock {Rank zero DT invariants}.
\newblock {\em In preparation}, 2022.

\bibitem{Givental1998}
A.~Givental.
\newblock {\em A mirror theorem for toric complete intersections}, pages
  141--175.
\newblock Birkh{\"a}user Boston, Boston, MA, 1998.

\bibitem{GV1}
R.~Gopakumar and C.~Vafa.
\newblock {M-Theory and Topological Strings--I}.
\newblock {\em arXiv preprint, arXiv: hep-th/9809187}, 1998.

\bibitem{GV2}
R.~Gopakumar and C.~Vafa.
\newblock {M-Theory and Topological Strings--II}.
\newblock {\em arXiv preprint, arXiv: hep-th/9812127}, 1998.

\bibitem{GJR17P}
S.~Guo, F.~Janda, and Y.~Ruan.
\newblock {A mirror theorem for genus two Gromov--Witten invariants of quintic
  threefolds}.
\newblock {\em arXiv preprint, arXiv:1709.07392}, 2017.

\bibitem{GJR18}
S.~Guo, F.~Janda, and Y.~Ruan.
\newblock {Structure of higher genus Gromov--Witten invariants of quintic
  3-folds}.
\newblock {\em arXiv preprint, arXiv:1812.11908}, 2018.

\bibitem{happel1996tilting}
D.~Happel, I.~Reiten, and S.~O. Smal{\o}.
\newblock {\em Tilting in abelian categories and quasitilted algebras}, volume
  575.
\newblock American Mathematical Soc., 1996.

\bibitem{harris1980}
J.~Harris.
\newblock {The genus of space curves}.
\newblock {\em Mathematische Annalen}, 249:191--204, 1980.

\bibitem{har80}
R.~Hartshorne.
\newblock Stable reflexive sheaves.
\newblock {\em Mathematische Annalen}, 254:121--176, 1980.

\bibitem{Hartshorne1994TheGO}
R.~Hartshorne.
\newblock The genus of space curves.
\newblock {\em Annali dell’Universit{\`a} di Ferrara}, 40:207--223, 1994.

\bibitem{PhysRevD.79.066001}
M.-x. Huang, A.~Klemm, M.~Mari\~no, and A.~Tavanfar.
\newblock {Black holes and large order quantum geometry}.
\newblock {\em Phys. Rev. D}, 79:066001, 2009.

\bibitem{HKQ09}
M.-x. Huang, A.~Klemm, and S.~Quackenbush.
\newblock {\em Topological String Theory on Compact Calabi--Yau: Modularity and
  Boundary Conditions}, pages 1--58.
\newblock Springer Berlin Heidelberg, Berlin, Heidelberg, 2009.

\bibitem{IP13}
E.-N. Ionel and T.~Parker.
\newblock {The Gopakumar--Vafa formula for symplectic manifolds}.
\newblock {\em Annals of Mathematics}, 187, 06 2013.

\bibitem{KKV}
S.~Katz, A.~Klemm, and C.~Vafa.
\newblock {M-Theory, Topological Strings and Spinning Black Holes}.
\newblock {\em Adv. Theor. Math. Phys.}, 3, 11 1999.

\bibitem{koseki2020bogomolov}
N.~Koseki.
\newblock {On the Bogomolov--Gieseker inequality for hypersurfaces in the
  projective spaces}.
\newblock {\em arXiv preprint, arXiv:2008.09799}, 2020.

\bibitem{lang04}
A.~Langer.
\newblock {Semistable sheaves in positive characteristic}.
\newblock {\em Annals of Mathematics}, 159(1):251--276, 2004.

\bibitem{li2018stability}
C.~Li.
\newblock {Stability conditions on Fano threefolds of Picard number 1}.
\newblock {\em Journal of the European Mathematical Society}, 21(3):709--726,
  2018.

\bibitem{liquintic}
C.~Li.
\newblock On stability conditions for the quintic threefold.
\newblock {\em Inventiones mathematicae}, 218, 10 2019.

\bibitem{LLY97}
B.~H. Lian, K.~Liu, and S.-T. Yau.
\newblock {Mirror principle. I}.
\newblock {\em Asian Journal of Mathematics}, 1(4):729--763, 1997.

\bibitem{MP2}
A.~Maciocia and D.~Piyaratne.
\newblock {Fourier-Mukai transforms and Bridgeland stability conditions on
  abelian threefolds II}.
\newblock {\em International Journal of Mathematics}, 27, 10 2013.

\bibitem{MP1}
A.~Maciocia and D.~Piyaratne.
\newblock {Fourier-Mukai transforms and Bridgeland stability conditions on
  abelian threefolds}.
\newblock {\em Algebraic Geometry}, 2 (3):270--297, 2015.

\bibitem{Mac14}
E.~Macr{\`i}.
\newblock {A generalized Bogomolov--Gieseker inequality for the
  three-dimensional projective space}.
\newblock {\em Algebra \& Number Theory}, 8(1):173 -- 190, 2014.

\bibitem{MS20}
E.~Macrì and B.~Schmidt.
\newblock Derived categories and the genus of space curves.
\newblock {\em Algebraic Geometry}, 01 2020.

\bibitem{MP06}
D.~Maulik and R.~Pandharipande.
\newblock {A topological view of Gromov--Witten theory}.
\newblock {\em Topology}, 45:887--918, 09 2006.

\bibitem{PP12}
R.~Pandharipande and A.~Pixton.
\newblock {Gromov--Witten/Pairs correspondence for the quintic 3-fold}.
\newblock {\em Journal of the American Mathematical Society}, 30, 06 2012.

\bibitem{PT07}
R.~Pandharipande and R.~Thomas.
\newblock {Curve counting via stable pairs in the derived category}.
\newblock {\em Inventiones mathematicae}, 178, 07 2007.

\bibitem{PT10}
R.~Pandharipande and R.~Thomas.
\newblock {Stable pairs and BPS invariants}.
\newblock {\em Journal of the American Mathematical Society}, 23(1):267--297,
  2010.

\bibitem{sanna2014rational}
G.~Sanna.
\newblock {Rational curves and instantons on the Fano threefold $Y_5$}.
\newblock {\em arXiv preprint, arXiv:1411.7994}, 2014.

\bibitem{sch13}
B.~Schmidt.
\newblock {A generalized Bogomolov--Gieseker inequality for the smooth quadric
  threefold}.
\newblock {\em Bulletin of the London Mathematical Society}, 46(5):915--923, 06
  2014.

\bibitem{thomasDT}
R.~Thomas.
\newblock {A holomorphic Casson invariant for Calabi--Yau 3-folds, and bundles
  on K3 fibrations}.
\newblock {\em Journal of Differential Geometry}, 54, 12 2000.

\bibitem{Zi08}
A.~Zinger.
\newblock {The reduced genus-one Gromov--Witten invariants of Calabi--Yau
  hypersurfaces}.
\newblock {\em Journal of the American Mathematical Society}, 22:691--737,
  2009.

\end{thebibliography}
%\nocite{*}

\end{document}